\documentclass[11pt]{article}
\usepackage{setspace}
\usepackage{cprotect}
\usepackage{amsmath,amssymb}
\usepackage{amsthm}
\usepackage{url}
\usepackage{fullpage}
\usepackage{makeidx}
\usepackage{enumerate}
\usepackage{fullpage}
\usepackage{graphicx,float,psfrag,epsfig,caption,subcaption}
\usepackage{epstopdf}
\usepackage{amssymb,comment}
\usepackage{mathrsfs}
\usepackage{color}
\usepackage[mathscr]{euscript}
\usepackage{bbm}
\usepackage{accents}
\usepackage[style=alphabetic,maxbibnames=99]{biblatex}
\addbibresource{reference.bib}
\usepackage{hyperref}

\newtheorem{thm}{Theorem}[section]
\newtheorem*{thm*}{Theorem}

\newtheorem{assump}{Assumption}[section]
\newtheorem{ques}{Question}

\newtheorem{cor}{Corollary}[section]

\newtheorem{lem}[thm]{Lemma}

\theoremstyle{definition}
\newtheorem{defn}{Definition}[section]

\theoremstyle{remark}

\newcommand{\E}{\mathbb{E}}

\newcommand{\bP}{\mathbb{P}}
\newcommand{\bR}{\mathbb{R}}
\newcommand{\cB}{\mathcal{B}}
\newcommand{\cC}{\mathcal{C}}

\newcommand{\cW}{\mathcal{W}}
\newcommand{\cR}{\mathcal{R}}
\newcommand{\cI}{\mathcal{I}}

\newcommand{\cA}{\mathcal{A}}
\newcommand{\sn}{\sqrt{n}}

\newcommand{\sF}{\mathsf{F}}
\newcommand{\sL}{\mathsf{L}}
\newcommand{\sd}{\mathsf{d}}
\newcommand{\veps}{\varepsilon}

\DeclareMathOperator\sgn{sgn}

\title{Binary perceptron: efficient algorithms can find solutions in \\ a rare well-connected cluster
}
\author{Emmanuel Abbe \thanks{Institute of Mathematics, EPFL, Lausanne, CH-1015, Switzerland. Email: emmanuel.abbe@epfl.ch.} \and Shuangping Li \thanks{PACM, Princeton University, Princeton, NJ, 08544, USA. Email: sl31@princeton.edu.} \and Allan Sly \thanks{Department of Mathematics, Princeton University, Princeton, NJ, 08544, USA. Email: allansly@princeton.edu.}}
\date{}

\begin{document}
\maketitle

\begin{abstract}
It was recently shown that almost all solutions in the symmetric binary perceptron are isolated, even at low constraint densities, suggesting that finding typical solutions is hard. In contrast, some algorithms have been shown empirically to succeed in finding solutions at low density. 
This phenomenon has been justified numerically by the existence of subdominant and dense connected regions of solutions, which are accessible by simple learning algorithms. 
In this paper, we establish formally such a phenomenon for both the symmetric and asymmetric binary perceptrons. We show that at low constraint density (equivalently for overparametrized perceptrons), there exists indeed a subdominant connected cluster of solutions with almost maximal diameter, and that an efficient multiscale majority algorithm can find solutions in such a cluster with high probability, settling in particular an open problem posed by Perkins-Xu in STOC'21. In addition, even close to the critical threshold, we show that there exist clusters of linear diameter for the symmetric perceptron, as well as for the asymmetric perceptron under  additional assumptions. 

\end{abstract}

\section{Introduction}
The binary perceptron is a simple neural network model. It was studied in the 60s by Cover\footnote{Mainly for the spherical case.} \cite{cover1965geometrical} and in the 80s in the statistical physics literature with detailed characterizations put forward by Gardner and Derrida \cite{gardner1988optimal} and Krauth and M\'ezard \cite{krauth1989storage}. More recently, the structural properties of its solution space have been related to the behavior of algorithms for learning neural networks in \cite{baldassi2016unreasonable,baldassi2016local,braunstein2006learning,baldassi2015subdominant} and several probabilistic results have been established in \cite{kim1998covering,talagrand1999intersecting,stojnic2013discrete,ding2019capacity,aubin2019storage,perkins2021frozen,abbe2021proof} (see further discussions below). 

The asymmetric binary perceptron model (ABP) is defined as follows. Let $G$ be an $m$ by $n$ matrix with i.i.d.\ entries taking value in $\{+1,-1\}$ with equal probability. Fix a real number $\kappa$, and consider the following constraints:  
\begin{align*}
S_j(G):=\left\{X\in\{-1,+1\}^n: \frac{1}{\sqrt{n}}\sum_{i=1}^n
	G_{j,i} X_i
	\geq\kappa \right \}, \quad j=1,\cdots,m.
\end{align*}
We consider the regime when $m$ and $n$ go to infinity with a fixed ratio $\alpha=m/n$. The question is to characterize the structure of the following space
\begin{align*}
    S(G):=\bigcap_{j=1}^m S_j(G).
\end{align*}
In the symmetric binary perceptron (SBP), a symmetric variant of ABP originally studied in \cite{aubin2019storage}, the constraints are given by 
\begin{align*}
\tilde S_j(G):=\left\{X\in\{-1,+1\}^n: \frac{1}{\sqrt{n}}\left|\sum_{i=1}^n
	G_{j,i} X_i\right|
	\leq\kappa \right \}, \quad j=1,\cdots,m,
\end{align*}
for any $\kappa>0$, and the solution space is defined as 
\begin{align*}
    \tilde S(G):=\bigcap_{j=1}^m \tilde S_j(G).
\end{align*}

The binary perceptron model has a strong freezing property that takes place at all positive density. Namely, it was shown that most solutions are isolated in the SBP model~\cite{perkins2021frozen,abbe2021proof}, and this is also believed to be true for the ABP model~\cite{krauth1989storage,huang2013entropy}. Solutions of this form are generally expected to be hard to find as in the case of random constraint
satisfaction problems (CSPs)~\cite{krzakala2007gibbs}. However, efficient algorithms have been shown empirically to succeed in finding solutions~\cite{braunstein2006learning, baldassi2015max,baldassi2007efficient,baldassi2009generalization}, suggesting that such algorithms find atypical solutions. This phenomenon has been justified numerically by the existence of subdominant and dense connected regions of solutions~\cite{baldassi2015subdominant}. We also refer to~\cite{baldassi2021unveiling} for heuristic descriptions of the solution space. 

We establish formally such a phenomenon for both ABP and SBP. Our main result is to show that there is a large diameter cluster when the constraint density $\alpha$ is small enough. This cluster is subdominant with an exponentially small fraction of vertices for the SBP model, using in addition to the above the results from~\cite{aubin2019storage,abbe2021proof}, and we believe that the same holds for ABP. 
In addition, we also show that this wide connected cluster is accessible to polynomial time algorithms, using a multiscale majority algorithm inspired by the algorithm of Kim and Roche~\cite{kim1998covering}.

When $\alpha$ becomes larger, such a large diameter cluster no longer exists. Yet, we show that even close to the critical threshold, there exist clusters of linear diameter for the symmetric perceptron, as well as for the asymmetric perceptron under an additional assumption. This is essentially on account of different behaviours of the solutions: we show that almost all `large-margin' solutions are in linear sized clusters. 

From an algorithmic point of view, we are able to propose an efficient algorithm that finds solutions in both the ABP and the SBP model when $\alpha$ is small. This settles a open problem in \cite{perkins2021frozen}. Compared to \cite{kim1998covering}, our algorithm employs a weighted majority procedure to handle different signs that appear in the SBP model. More importantly, we actually make use of our multiscale algorithm to construct solutions indexed by a tree. Together with a scheme to interpolate solutions, we are able to establish the above structural properties of the solution space. 

\subsection{Landscape description and the wide web}

To give a heuristic visualization of the landscape of the symmetric perceptron, we picture the fitness function of solutions $-\|GX\|_\infty$ as a rugged landscape with occasional peaks at different heights as illustrated in Figure~\ref{f:mountains}.  Away from the peaks the landscape falls away quickly in most directions but more slowly in a few directions (of course unlike our figure, the space of $X$ in the perceptron model is very high dimensional).  If we fix $\alpha$, varying $\kappa$ corresponds to taking different cross-sections of the landscape as illustrated in Figure~\ref{f:cross}.  For any $\kappa$, most clusters will be isolated points given by peaks of height exactly $-\kappa$.  But the taller peaks have larger cross-sections and for small enough $\kappa$ these connect together and can form very wide but thin webs as seen in the left cross-section.  For larger $\kappa$ the mountain cross-sections do no overlap but the largest mountains still give clusters of linear size.

\begin{figure}
    \centering
    \includegraphics[width=10cm]{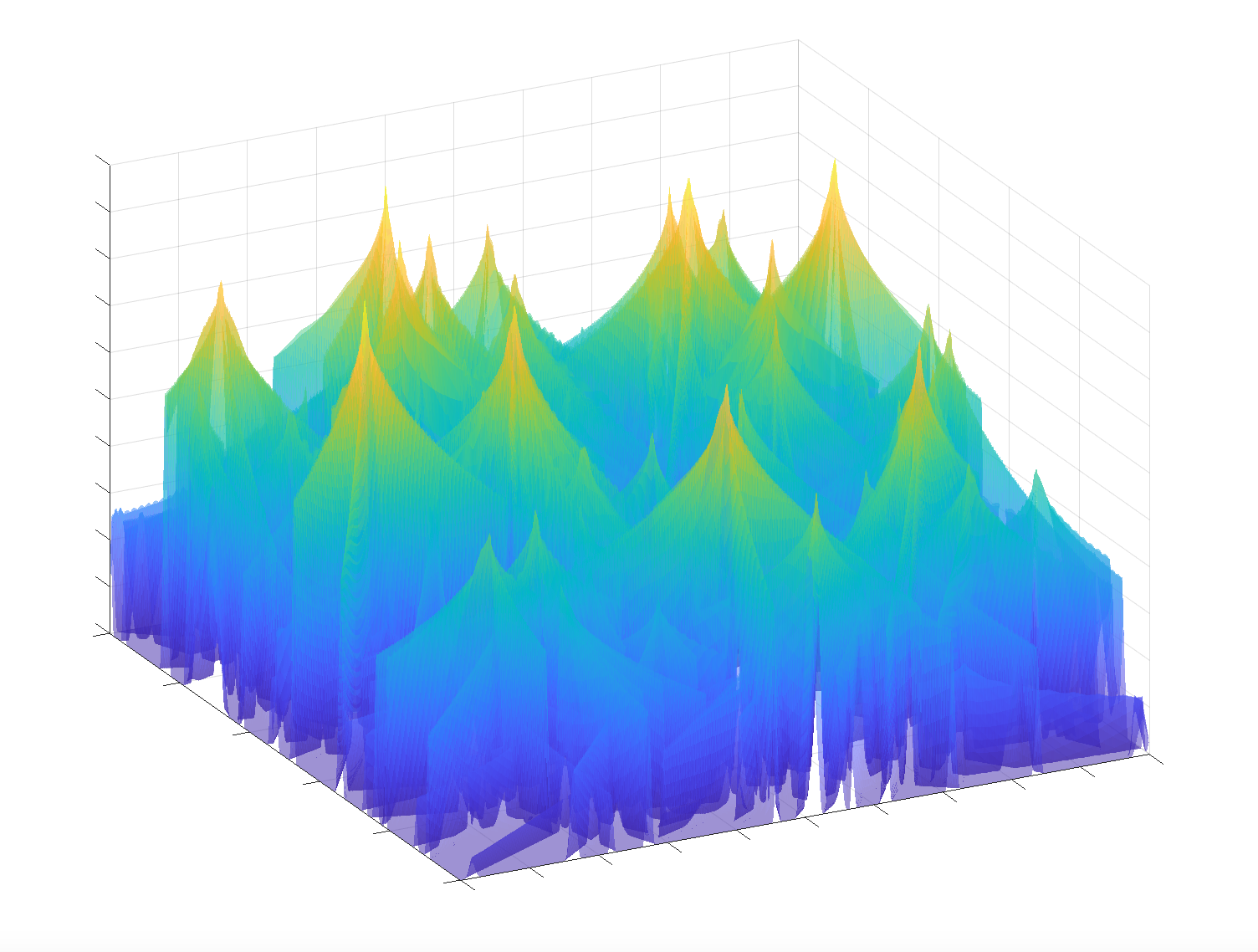}
    \caption{A heuristic illustration of the rugged landscape of the fitness function of solutions $-\|GX\|_\infty$ in the perceptron model.}
    \label{f:mountains}
    \centering
    \includegraphics[width=15cm]{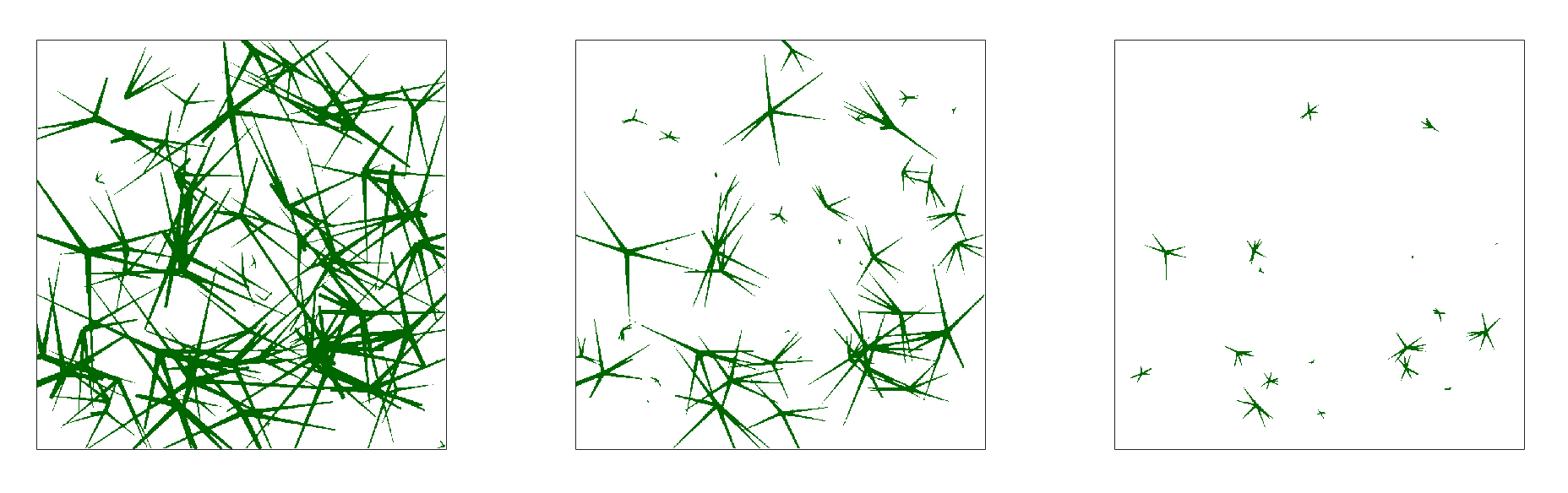}
    \caption{Cross sections $\{X:-\|GX\|_\infty \geq -\kappa\}$ at increasing values of $\kappa$ representing the clusters of solutions.}
    \label{f:cross}
\end{figure}

\subsection{The learning and CSP interplay}
Consider the problem of training a perceptron model with $\pm 1$ weights on a data distribution $P_D$ supported on $\mathcal{D} \times \mathcal{Y}$ where $\mathcal{D}=\{\pm 1\}^n, \mathcal{Y}=\{\pm 1\}$, by looking for a zero\footnote{One may be interested in a relaxed version of this where the loss is small rather than 0; the perfect interpolation regime is nonetheless of interest, due in particular to the behavior of deep learning algorithms \cite{bartlett2021deep}.} loss solution to the $(0/1)$-loss. This means that a training set $(U_i,Y_i)_{i=1}^m$ consisting of $m$ i.i.d.\ samples under $P_D$ is given, and one looks for $w \in \{\pm 1\}^n$ such that $Y_i=f_w(U_i)$ where (say for ABP)
\begin{align}
f_w(u)=\mathrm{sign}(w^{T}u/\sqrt{n}-\kappa), \quad u \in \{\pm 1\}^n.
\end{align}

In the realizable case, it is known that there exists such a function $f_w$, i.e., the data is generated consistently without noise, and the goal is to find such a function. Note that in order for the distribution of $(U_i,Y_i)$ to satisfy $Y_i=f_w(U_i)$ for some $w$, $U_i$ can be drawn uniformly at random on $\mathcal{D}$ and $Y_i$ set to $f_w(U_i)$, or $Y_i$ can be drawn uniformly at random on $\mathcal{Y}$ and $U_i$ uniformly at random under the requirement that $Y_i=f_w(U_i)$. In such cases, if there is uniqueness of the planted solution, then one has directly perfect generalization to unseen data, and without uniqueness, the implicit bias of the training algorithm (i.e., the algorithm finding an interpolator) needs to factor in to ensure  that the produced solution is a well-behaved one that generalizes. 

In general, the labels can also be assumed to be random and independent of $U_i$, in which case the training problem remains of interest from an optimization point of view. In particular, if $\kappa=0$, asking for $f_w(U_i)=Y_i$ with  uniform labels $Y_i$ independent of $U_i$ is equivalent to asking for $f_w(U_i)=1$ due to the symmetry of the model. This gives the CSP formulation for the perceptron model, where the variable $U$ corresponds to the variable $G$ and the variable $w$ corresponds to the variable $X$ from previous section. For any $\kappa$, the CSP formulation can be adapted to have the uniform label marginal, or one can simply take the labels to be always $+1$. 

In \cite{abbe2021proof}, it was shown that the planted and unplanted models are contiguous in the SAT phase for the symmetric perceptron. Thus, for any constraint density below the critical threshold, or equivalently, below the interpolation threshold of the perceptron model, the planting has little effect and exponentially many solutions exist. In this paper, we further show that for sufficiently low constraint density, or equivalently, for sufficiently overparametrized perceptron models, the solution space is not just large in size but there exists in fact a `wide web', i.e., a connected cluster of maximal diameter that must be subdominant since typical solutions are known to be isolated \cite{perkins2021frozen, abbe2021proof}. In particular, we show here that the efficient multiscale majority algorithm reaches this wide web in quadratic time. Thus,  overparametrization enables connectivity properties of the solution space that are exploitable by efficient algorithms. 

It would be interesting to further investigate whether such webs also play a role in more general settings of learning using neural networks.

\section{Results}
In this section, we describe our main results. We start with the definition of a (connected) cluster. 

\begin{defn}[Cluster] 
We say that two solutions $X_1,X_2\in S(G)$ (resp. $\tilde S(G)$ for the SBP model) are adjacent if they differ in a single coordinate. A cluster of solutions is any subset $\cC\subset S(G) \subset \{\pm 1\}^n $ that constitutes a maximal connected component of $S(G)$ under this adjacency.
\end{defn}

In the following theorem, we show the existence of a cluster with (almost) maximal diameter, i.e., the `wide web'. 
\begin{thm}\label{t:octupus}\text{}\\
1. In the SBP model, for any $\kappa>0$, there exists $\tilde\alpha_o(\kappa)>0$, such that whenever $0<\alpha<\tilde \alpha_o(\kappa)$, there exists a cluster of diameter $n$ with high probability.\\
2. In the ABP model, for any $\kappa\in \bR$ and $\veps>0$, there exists $\alpha_o(\kappa,\veps)>0$, such that whenever $0<\alpha<\alpha_o(\kappa,\veps)$, there exists a cluster of diameter at least $(1-\veps)n$ with high probability.
\end{thm}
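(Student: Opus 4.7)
The plan is to use the multiscale majority algorithm of the paper to build a tree-indexed family of solutions, and then to use the interpolation scheme to turn this family into a single long path inside one cluster. For any seed $Y \in \{\pm 1\}^n$, the algorithm outputs a vector $\Phi(Y)$ produced by weighted majority decisions over rows of $G$ at geometrically increasing scales. I would first establish that for $\alpha$ below the relevant threshold $\tilde\alpha_o(\kappa)$ (resp.\ $\alpha_o(\kappa,\veps)$), with high probability over $G$, every seed $Y$ in the family yields a valid solution $\Phi(Y) \in \tilde S(G)$ (resp.\ $S(G)$); this is essentially the algorithmic guarantee supplied by the rest of the paper.

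To generate the tree, I run $\Phi$ on a family of seeds arranged so that neighbors differ in very few coordinates. The simplest choice is $Y^{(k)}$, the seed with $+1$ in the first $k$ coordinates and $-1$ in the remaining $n-k$ coordinates, for $k = 0, 1, \ldots, n$. Running $\Phi$ with coupled internal randomness across all seeds, the core technical claim is that $\Phi(Y^{(k)})$ and $\Phi(Y^{(k+1)})$ can be joined by a path of single-coordinate flips lying entirely in the solution space. Since the two seeds disagree on only one coordinate, the two executions of the multiscale algorithm agree everywhere except on a thin sub-branch of the recursion, and the interpolation scheme toggles the affected coordinates one by one in a prescribed order while the slack in the remaining constraints absorbs the perturbation. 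Concatenating these pieces embeds $\Phi(Y^{(0)}), \ldots, \Phi(Y^{(n)})$ in a single cluster.

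It remains to bound the diameter from below. For the SBP model, the sign-flip symmetry $X \in \tilde S(G) \Leftrightarrow -X \in \tilde S(G)$ lifts to $\Phi(-Y) = -\Phi(Y)$ under a symmetric implementation of the algorithm, so the endpoints $\Phi(Y^{(0)}) = -\Phi(Y^{(n)})$ are exactly antipodal and the cluster has diameter $n$. For the ABP model no such symmetry is available, and I instead show that for $\alpha$ small enough in terms of $\kappa$ and $\veps$, one has $d(\Phi(Y), \Phi(-Y)) \geq (1-\veps)n$ with high probability: at low constraint density each coordinate of $\Phi(Y)$ is driven predominantly by a weighted majority of $G$-rows prefiltered by $Y$, whose sign flips when $Y$ is negated, so only a $\veps$-fraction of coordinates can survive unchanged.

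The main obstacle is the interpolation step. Although $\Phi$ varies only mildly with its seed, naively flipping coordinates between $\Phi(Y^{(k)})$ and $\Phi(Y^{(k+1)})$ will violate constraints whose score $\langle G_j, X\rangle/\sqrt{n}$ lies near the margin $\kappa$. The argument therefore requires uniform control of these scores over all rows $j$, all intermediate vectors $X$ along the path, and all $n$ stitching segments simultaneously, ultimately reducing to strong concentration for the weighted-majority scores that drive the multiscale algorithm together with a delicate use of the margin slack built into the scale hierarchy. This is the technical heart of the construction and accounts for the smallness of $\tilde\alpha_o(\kappa)$ and $\alpha_o(\kappa,\veps)$.
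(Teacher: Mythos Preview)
Your overall strategy---run the multiscale majority algorithm on a one-parameter family of seeds, interpolate between consecutive outputs level by level, and union-bound over the resulting tree of intermediate vectors---is exactly the paper's approach, and your identification of the interpolation step as the technical heart is correct.

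There is, however, a concrete error in your ABP diameter argument, and it stems from a misreading of the algorithm. The seed is not an $n$-vector: round~0 of the algorithm takes a vector $v\in\{\pm1\}^{n_0}$ and simply \emph{copies} it into the first $n_0$ coordinates of the output; only the remaining $n-n_0$ coordinates are produced by majority votes. Consequently $\cA(v)$ and $\cA(-v)$ automatically differ on all $n_0$ round-0 coordinates, and the paper's diameter bound is just the observation that for $\alpha<\min(\alpha_0,\veps^2/(10C_\kappa^2))$ one has $n_0/n>1-\veps$. Your proposed mechanism---that ``each coordinate of $\Phi(Y)$ is driven by a weighted majority of $G$-rows prefiltered by $Y$, whose sign flips when $Y$ is negated''---does not match the ABP update rule: in rounds $i\geq 1$ the algorithm sets $X_j=\sgn\big(\sum_{r\in\cR_i}G_{r,j}\big)$, and negating the seed changes the \emph{membership} of $\cR_i$ (the rows with smallest partial sum become those with largest) but introduces no sign flip in the summand. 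There is no reason these majorities should reverse, and indeed the paper makes no such claim. Once you correct the role of round~0, the ABP diameter bound becomes a one-line parameter count rather than a probabilistic estimate.
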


The following theorem shows that there are efficient algorithms to locate solutions in such clusters. 
\begin{thm}\label{t:algo}\text{}\\
1. In the SBP model, for any $\kappa>0$, there exists $\tilde \alpha_a(\kappa)>0$, such that whenever $0<\alpha<\tilde \alpha_a(\kappa)$, there exists an efficient algorithm that runs in time $O(n^2)$, takes $G$ as input, and outputs a solution $X$ with high probability. Moreover, $X$ lies in a cluster of diameter $n$ with high probability.\\
2. In the ABP model, for any $\kappa \in \bR$, there exists $\alpha_a(\kappa)>0$, such that whenever $0<\alpha<\alpha_a(\kappa)$, there exists an efficient algorithm that runs in time $O(n^2)$, takes $G$ as input, and outputs a solution $X$ with high probability. Moreover, for any $\varepsilon>0$, there exists $\alpha_a(\kappa,\veps)>0$, such that whenever $0<\alpha<\alpha_a(\kappa,\veps)$, the output $X$ lies in a cluster of diameter at least $(1-\veps)n$ with high probability.
\end{thm}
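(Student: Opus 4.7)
The plan is to adapt the multiscale majority algorithm of Kim--Roche~\cite{kim1998covering} and to exploit the tree structure of its construction so that it simultaneously produces a solution and certifies its cluster. The algorithm processes the $n$ coordinates through a full binary tree of depth $L=O(\log n)$. At each internal node it computes a (weighted) sign from the corresponding restriction of $G$: for ABP a plain majority suffices, while for SBP one uses a weighted majority whose weights are set at a coarser scale, in order to accommodate the two-sided constraint $|G_j X|/\sqrt{n}\le \kappa$. Each coordinate is touched $O(\log n)$ times and each of the $m=\alpha n$ constraints is touched $O(n)$ times in total, giving the claimed $O(n^2)$ runtime.

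Feasibility follows from a concentration argument at each scale: conditional on the random choices made at coarser scales, the sum $\sum_i G_{j,i}X_i/\sqrt{n}$ concentrates, with Gaussian-type fluctuations, around a value strictly in the interior of the feasible interval. Taking $\alpha$ small enough and union-bounding over $j=1,\dots,m$ yields $X\in S(G)$ (resp.\ $\tilde S(G)$) with high probability, which is the first half of each part of the statement.

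For the cluster conclusion, rather than invoking Theorem~\ref{t:octupus} as a black box, one reuses the algorithm itself. Varying the choices made at the top levels of the tree yields a family of outputs $\{X^{(v)}\}_v$ indexed by the leaves $v$ of an auxiliary tree, each of which is a solution by the same feasibility analysis. Sibling outputs agree off a single subtree of coordinates, and within that subtree one can interpolate between the two leaf configurations by flipping coordinates one at a time in an order dictated by the algorithm's hierarchy, so that the partial sums $\sum_i G_{j,i}X_i^{(t)}/\sqrt{n}$ along the path stay within the feasibility slab for every constraint $j$. Concatenating these local interpolations places the algorithm's initial output in a single cluster that contains solutions disagreeing in all $n$ coordinates (resp.\ in $(1-\varepsilon)n$ coordinates), giving the claimed diameter.

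The main obstacle is precisely the uniform control of the signed partial sums along these interpolation paths. The feasibility margin obtained from the concentration step must absorb the $O(\sqrt{n})$ fluctuations accumulated over $\Theta(n)$ single-coordinate flips in the interpolation, uniformly over all $m$ constraints and over the whole family of leaf pairs. In the SBP case this is further complicated by the possibility that $\sgn(G_j X^{(t)})$ changes during interpolation, which forces a carefully chosen flip ordering that exploits independence across scales of the tree; the quantitative threshold $\alpha_a(\kappa,\varepsilon)$ is essentially determined by this cancellation budget, and will be strictly smaller than the feasibility threshold from the second paragraph.
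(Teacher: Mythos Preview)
Your outline has the right two-part shape---algorithm produces a solution; interpolation between algorithmic outputs certifies the cluster---and you correctly flag uniform control along interpolation paths as the crux. But the algorithm you describe is not the one that makes the analysis go through, and the discrepancy matters.

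The paper's algorithm is not a binary tree on coordinates but a \emph{sequence} of $R=O(\sqrt{\log n})$ rounds. Round $i$ fixes a fresh block of $n_i\asymp\sqrt{m_i n}$ coordinates by a (weighted) majority vote over the $m_i$ rows with currently worst residual, where $m_i\asymp m\,\Psi(ci)$ decays like a Gaussian tail; $R$ is determined by $m_{R}/m\sim n^{-0.01}$, which gives $R\asymp\sqrt{\log n}$, not $\log n$, and each coordinate is assigned exactly once rather than touched $O(\log n)$ times. The tree enters only in the connectivity proof: at level $k$ one interpolates coordinatewise between the round-$k$ outputs $\cA_k(\cdot)$ of two adjacent branches, producing at most $(n+1)^{R+1}$ candidate vectors $T(v,h_0,\dots,h_R)$, each shown (Lemma~\ref{l:connect}) to be a solution with probability $\ge 1-\exp(-n^\veps)$; the union bound closes because $(n+1)^{R+1}=\exp(O(\sqrt{\log n}\,\log n))\ll\exp(n^\veps)$. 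The per-vertex bound works precisely because of the sequential block structure: round $k$ uses only the fresh columns $G(k{:}k)$, so after conditioning on earlier rounds the round-$k$ contribution along any interpolation is a sum of fresh biased Rademachers, controllable inductively through the empirical distribution of $|S^{(r)}(0{:}k)|$ (Lemmas~\ref{l:together2} and~\ref{l:together3}). A binary-tree decomposition of the coordinate set would not deliver this fresh-randomness-per-level property, and your ``absorb $O(\sqrt n)$ fluctuations over $\Theta(n)$ flips'' heuristic is not how the argument is closed: adjacent branches differ by one coordinate \emph{per level above}, and one proves directly that every intermediate vector is feasible, rather than budgeting a cumulative deviation along a long path.
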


Now we give the definition of a class of `large-margin' solutions.
\begin{defn}\text{}\\
1. In the SBP model, for any $\kappa'>0$, we define $X\in \{\pm 1\}^n$ to be a $\kappa'$-solution if $|GX|\leq \kappa'\sn$ holds entrywisely.\\
2. In the ABP model, for any $\kappa'\in \bR$, we define $X\in \{\pm 1\}^n$ to be a $\kappa'$-solution if $GX\geq \kappa'\sn$ holds entrywisely.
\end{defn}

With this definition, we have the following theorem.
\begin{thm}\label{t:localcluster}\text{}\\
1. In the SBP model, let $0<\kappa'<\kappa$ and $\alpha>0$. There exists $d>0$ such that $1-o_n(1)$ fraction of the $\kappa'$-solutions lie in clusters with diameter at least $d n$ with high probability.\\
2. In the ABP model, let $\kappa'>\kappa$ and $\alpha>0$. There exists $d>0$ such that $1-o_n(1)$ fraction of the $\kappa'$-solutions lie in clusters with diameter at least $d n$ with high probability.
\end{thm}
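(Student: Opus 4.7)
The plan is, for both models, to show that from a typical $\kappa'$-solution $X$ one can construct a path of coordinate flips of length at least $dn$ that stays inside the $\kappa$-solution set, placing $X$ in a cluster of diameter $\geq dn$. I focus on ABP ($\kappa'>\kappa$); the SBP case is parallel, using the two-sided margin $\kappa-\kappa'$ in place of $\kappa'-\kappa$. The key input is that a $\kappa'$-solution has uniform slack $Y_j-\kappa\geq\kappa'-\kappa>0$ in every constraint (with $Y_j:=(GX)_j/\sn$), while each single coordinate flip changes every $Y_j$ by only $\pm 2/\sn$, so the difficulty is not one flip but how to chain $\Theta(n)$ of them without ever pushing any $Y_j$ below $\kappa$.

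To turn this slack into a linear-length path, I would fix a small constant $d=c(\kappa'-\kappa)^2$, draw a uniform random subset $F\subseteq[n]$ with $|F|=dn$ and a uniform random ordering $i_1,\dots,i_{dn}$ of $F$, and interpolate via $X^{(s)}:=X\oplus\mathbf{1}_{\{i_1,\dots,i_s\}}$. Setting $M_j^{(s)}:=\sum_{\ell\leq s}G_{j,i_\ell}X_{i_\ell}$ gives $Y_j^{(s)}=Y_j-(2/\sn)M_j^{(s)}$, and $s\mapsto M_j^{(s)}$ is, up to sampling-without-replacement corrections, a symmetric simple random walk. Reflection/Azuma bounds then yield $\max_{s\leq dn}|M_j^{(s)}|\lesssim\sqrt{dn}$ with $j$-wise failure probability $\exp(-c(\kappa'-\kappa)^2/d)$, so for $c$ small every single constraint is preserved along the entire path with very high probability.

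The main obstacle is the uniform-in-$j$ control: a naive union bound over the $m=\alpha n$ constraints loses a $\sqrt{\log n}$ factor and collapses $d$ to $\Theta(1/\log n)$. To recover a constant $d$, I would stratify constraints by margin. For a typical $X$, the empirical distribution of $\{Y_j\}$ is close to a standard Gaussian conditioned to $\geq\kappa'$, so the set $T:=\{j:Y_j<\kappa'+\eta\}$ of ``tight'' constraints has size $O(\eta m)$ while every other $j$ carries margin at least $\eta$; the additional $\eta$ absorbs the $\sqrt{\log n}$ penalty in the union bound over the loose constraints. On the $O(\eta m)$ tight constraints I would switch to a greedy rule, forbidding at each step those candidate coordinates whose flip would push some $Y_j^{(s)}$ with $j\in T$ below $\kappa$, and use independence of $\{G_{j,i}\}_i$ to show that only an $o(1)$ fraction of the remaining candidates is excluded, leaving $\Omega(n)$ admissible choices per step. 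Taking $\eta$ small but constant closes the construction, and upgrading from ``typical $X$'' to ``$1-o_n(1)$ fraction of $\kappa'$-solutions'' is a first-moment calculation: the expected number of $\kappa'$-solutions with an $\eta$-atypical marginal profile is $o(1)\cdot\E[\#\kappa'\text{-solutions}]$, and this vanishing fraction survives because the total count of $\kappa'$-solutions is exponential in $n$ by the contiguity and counting arguments of~\cite{abbe2021proof} for SBP, together with the analogous moment computation for ABP. The hardest technical step will be decoupling the greedy increments on $T$ from the random-walk analysis on its complement, since conditioning on the greedy choice at step $s$ breaks the exchangeability used in the reflection bound; a natural workaround is to partition the $dn$ steps into blocks of length $\sqrt{n}$, apply the two rules within each block with the greedy coordinates carved out in advance, and union-bound across the $\sqrt{n}$ blocks.
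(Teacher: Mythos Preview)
Your approach is genuinely different from the paper's, but the union-bound step has a gap I do not see how to close. You want $\max_{s\le dn}M_j^{(s)}\le(\text{margin}_j)\sn/2$ simultaneously for all $j$. The reflection/Azuma bound gives $\bP(\max_s M_j^{(s)}\ge t\sqrt{dn})\le 2e^{-t^2/2}$, so for a single loose row with margin $\ge\eta$ the failure probability is $\exp(-c\eta^2/d)$, a \emph{constant independent of $n$}. Union-bounding over the $(1-O(\eta))m=\Theta(n)$ loose constraints therefore diverges; to make it $o(1)$ you would need $\eta\gtrsim\sqrt{d\log n}$, not a fixed constant, so a constant $\eta$ cannot ``absorb the $\sqrt{\log n}$ penalty''. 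With $\eta$ of that order the tight set $T$ has size $\Theta(m\sqrt{\log n})$, and your greedy rule must keep all of those away from $\kappa$. Once $k$ rows in $T$ sit within $2/\sn$ of the boundary you need a coordinate $i$ with $G_{j,i}X_i=-1$ for all $k$ of them at once, cutting the admissible set by $2^{-k}$; nothing in your scheme prevents $k$ from growing, and the block decomposition does not help because the near-boundary set is determined by the cumulative history across blocks.

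The paper avoids this by never flipping coordinates passively. Given a $\kappa'$-solution $X$, it freezes $X([n-\sd])$ and \emph{re-runs the multiscale majority algorithm} on the last $\sd$ coordinates. Each round singles out the $m_i$ rows with the largest current deviation and pushes them toward the feasible region by a weighted majority vote; the $m_i$ shrink like $\Psi(ci)$, so after $R=O(\sqrt{\log n})$ rounds every row is under control. Paths are then obtained by interpolating, one coordinate at a time, between algorithm outputs along a tree (the vectors $T(X,\ell,h_{\sL+1},\dots,h_R)$ and $T_2(X,\ell,h_{\sL},\dots,h_R)$ of Sections~4.2.2--4.2.3). The key estimate is that each tree vertex is a solution with probability $1-\exp(-n^{\veps})$ (Lemmas~\ref{l:connect2} and~\ref{l:connect3}), which easily survives the union bound over the $n^{O(\sqrt{\log n})}$ vertices. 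The $\kappa'$-margin is used only once, at the initial round, where Lemma~\ref{l:bridge} converts it into a tail bound on the partial row sums $S^{(r)}([\ell])$; from there the algorithm's active correction takes over, which is precisely the mechanism your random-walk/greedy analysis lacks.
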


Before we present our corollary on the existence of linear sized clusters, we recall some of the previous results on the capacity threshold of binary perceptrons. 

For the SBP model, it was proven in \cite{abbe2021proof} that for any $\kappa>0$, the capacity threshold is $\tilde \alpha_c(\kappa)=-\log(2)/\log(P_\kappa)$, where $P_\kappa=\bP(|N|\leq \kappa)$, with $N$ following the standard normal distribution. More precisely, for any $\kappa>0$, $\alpha>\tilde \alpha_c(\kappa)$, and $m=\lfloor\alpha n\rfloor$, there are no solutions with high probability, whereas for any $0<\alpha<\tilde \alpha_c(\kappa)$ and $m=\lfloor\alpha n\rfloor$, there is a solution with high probability. Together with this result, we have the following corollary. 
\begin{cor}\label{c:1}
In the SBP model, for any $0<\alpha<\tilde \alpha_c(\kappa)$, there exists $d>0$ such that there is a cluster with diameter at least $dn$ with high probability.
\end{cor}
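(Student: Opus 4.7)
The plan is to combine Theorem~\ref{t:localcluster}(1) with the sharp capacity threshold from \cite{abbe2021proof} applied at a slightly tighter margin $\kappa' < \kappa$. The key observation is that a $\kappa'$-solution (with $\kappa'<\kappa$) is \emph{a fortiori} an element of $\tilde S(G)$ at level $\kappa$, so the cluster containing it under the single-coordinate adjacency is a cluster of the SBP at parameter $\kappa$. Theorem~\ref{t:localcluster}(1) tells us that almost every $\kappa'$-solution lives in a cluster of diameter $\geq dn$, so it suffices to guarantee that at least one $\kappa'$-solution exists with high probability.

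First I would note that $\tilde\alpha_c(\kappa)=-\log(2)/\log(P_\kappa)$ is a continuous (in fact smooth, strictly increasing) function of $\kappa>0$, since $P_\kappa=\bP(|N|\leq \kappa)\in(0,1)$ is continuous and strictly increasing. Hence, given any $0<\alpha<\tilde\alpha_c(\kappa)$, I can pick some $\kappa'\in(0,\kappa)$ with $\alpha<\tilde\alpha_c(\kappa')$ still holding. This is the only place where the strict inequality $\alpha<\tilde\alpha_c(\kappa)$ is used.

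Next, applying the capacity result of \cite{abbe2021proof} at margin $\kappa'$, the condition $\alpha<\tilde\alpha_c(\kappa')$ implies that the SBP at parameter $\kappa'$ is satisfiable with high probability; that is, with high probability there exists $X\in\{\pm 1\}^n$ with $|GX|\leq \kappa'\sn$ entrywise, which is exactly a $\kappa'$-solution in our sense. Simultaneously, Theorem~\ref{t:localcluster}(1) applied to this pair $(\kappa',\alpha)$ yields some $d>0$ such that, with high probability, a $1-o_n(1)$ fraction of $\kappa'$-solutions lie in clusters (in $\tilde S(G)$) of diameter at least $dn$. Intersecting these two high-probability events, for $n$ large enough the number of $\kappa'$-solutions is positive and at least a $1-o_n(1)$ fraction of them (hence at least one) sits in a cluster of diameter $\geq dn$, which proves the corollary.

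I do not anticipate a real obstacle: the statement is essentially a packaging of Theorem~\ref{t:localcluster}(1), the sharp threshold from \cite{abbe2021proof}, and the continuity of $\tilde\alpha_c(\cdot)$. The only subtlety is that Theorem~\ref{t:localcluster}(1) is vacuous on the event that no $\kappa'$-solution exists, which is exactly why the threshold step at the slightly lower margin $\kappa'$ is necessary; one must not forget to produce such a witness before invoking the fractional statement.
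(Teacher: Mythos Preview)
Your proposal is correct and follows essentially the same approach as the paper's own proof: choose $\kappa'\in(0,\kappa)$ with $\alpha<\tilde\alpha_c(\kappa')$ using continuity of $\tilde\alpha_c$, invoke \cite{abbe2021proof} to guarantee the existence of a $\kappa'$-solution with high probability, and then apply Theorem~\ref{t:localcluster}(1). You spell out a few details (continuity of $\tilde\alpha_c$, intersecting the two high-probability events) that the paper leaves implicit, but the argument is the same.
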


In the ABP model, our results hold conditional on the following assumption on the continuity of the threshold function.

\begin{assump}\label{a:cont}
In the ABP model, there exists a sharp threshold function $\alpha_c(\kappa)$ such that for any $\kappa>0$, $\alpha>\tilde \alpha_c(\kappa)$, and $m=\lfloor\alpha n\rfloor$, there are no solutions with high probability, whereas for any $0<\alpha<\tilde \alpha_c(\kappa)$ and $m=\lfloor\alpha n\rfloor$, there is a solution with high probability. Moreover, $\alpha_c(\kappa)$ is a right continuous function of $\kappa$.
\end{assump}

\begin{cor}\label{c:2}
In the ABP model, if Assumption \ref{a:cont} holds, then for any $0<\alpha< \alpha_c(\kappa)$, there exists $d>0$ such that there is a cluster with diameter at least $dn$ with high probability.
\end{cor}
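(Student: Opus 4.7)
The plan is to deduce Corollary~\ref{c:2} from Theorem~\ref{t:localcluster}.2 by producing, with high probability, a $\kappa'$-solution for some $\kappa' > \kappa$. The point is that every $\kappa'$-solution with $\kappa' > \kappa$ satisfies $GX \geq \kappa' \sn \geq \kappa\sn$ entrywise, so it automatically lies in the ABP solution space $S(G)$, and its cluster in $S(G)$ is a legitimate ABP cluster. Hence it suffices to exhibit at least one $\kappa'$-solution and invoke Theorem~\ref{t:localcluster}.2 to place it in a cluster of linear diameter.

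For the first step I would use the right continuity assumed in Assumption~\ref{a:cont}. Setting $\eta := \alpha_c(\kappa) - \alpha > 0$, right continuity of $\alpha_c$ at $\kappa$ produces a $\delta > 0$ such that $\alpha_c(\kappa') > \alpha_c(\kappa) - \eta/2 > \alpha$ for every $\kappa' \in (\kappa, \kappa+\delta]$. Fixing any such $\kappa'$ and applying Assumption~\ref{a:cont} to the ABP model at threshold $\kappa'$ and density $\alpha < \alpha_c(\kappa')$, with high probability there exists a $\kappa'$-solution. Next I would apply Theorem~\ref{t:localcluster}.2 with this $\kappa'$ and the given $\alpha$: it yields a constant $d > 0$ such that w.h.p.\ a $1 - o_n(1)$ fraction of all $\kappa'$-solutions lie in clusters of diameter at least $dn$. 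Intersecting the two high-probability events, w.h.p.\ a $\kappa'$-solution exists \emph{and} almost all $\kappa'$-solutions sit in large clusters, so at least one cluster of $S(G)$ has diameter at least $dn$, which is the desired conclusion.

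The only nontrivial ingredient here is the right continuity of $\alpha_c$; all the real combinatorial work is hidden inside Theorem~\ref{t:localcluster}.2. If right continuity failed, then an arbitrarily small upward perturbation of $\kappa$ could collapse $\alpha_c(\kappa')$ below $\alpha$ and leave no $\kappa'$-solutions to seed a cluster, which is precisely the scenario Assumption~\ref{a:cont} is designed to exclude. Beyond this existence issue there is no real obstacle in the argument; it is essentially a one-line reduction to Theorem~\ref{t:localcluster}.2 once the appropriate $\kappa'$ has been chosen.
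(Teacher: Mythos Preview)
Your proposal is correct and is essentially the same argument as the paper's: use right continuity of $\alpha_c$ from Assumption~\ref{a:cont} to find $\kappa'>\kappa$ with $\alpha<\alpha_c(\kappa')$, deduce the existence of a $\kappa'$-solution with high probability, and then invoke Theorem~\ref{t:localcluster}.2 to place it in a cluster of diameter at least $dn$. The paper's proof is a three-line version of exactly this; your added remarks on why right continuity is needed and on intersecting the two high-probability events are accurate elaborations.
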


\subsection{Open problems}
We pose a few open problems for future work.
\begin{ques}
What is the threshold for the existence of a wide web? Does it coincide with the threshold for efficient algorithms?
\end{ques}
See \cite{baldassi2015subdominant,baldassi2021unveiling} for some numerical and heuristic studies. They speculate that the threshold could be related to the local entropy being monotonic.
\begin{ques}
Is the wide web unique?
\end{ques}
When the constraint density $\alpha$ is small enough, our methods could be used to show that all solutions found by multiscale majority type algorithms are all connected to a single cluster. But it would be interesting to know whether this is the same irrespective of the algorithm and whether the wide web is indeed the unique one. Finally we can ask whether the wide web appears discontinuously.
\begin{ques}
As we lower the constraint density $\alpha$, is there a discontinuity of the asymptotic diameter of the largest-diameter cluster?
\end{ques}

\section{Majority Algorithm}
We will introduce a multiscale majority algorithm. As mentioned in the introduction, the argument is inspired by Kim and Roche \cite{kim1998covering}.

\subsection{Majority algorithm for symmetric perceptron}
We start with some definitions and notations for our algorithm. Recall that our matrix $G$ is of size $m$ by $n$. We divide the columns of $G$ into different blocks. Our algorithm is applied inductively on each block during each round. Let $R$ be the number of rounds. We will use $n_i$ and $m_i$ to denote the number of columns and rows involved in round $i$ respectively. For any $\kappa>0$, we define
\begin{align*}
&\varepsilon_0=\kappa/10, \quad C_\kappa=10/\kappa+10+\kappa, \quad \alpha_0=\kappa^4/(4C_\kappa^6), \quad m_1=m,\quad  n_1=2\lfloor (C_\kappa\sqrt{\pi mn/2})/2\rfloor+1.
\end{align*}
For $2\leq i\leq R-1$, we set
\begin{align*}
    m_{i}:=2\lfloor\Psi(i \kappa/4+5/\kappa+5) m/2\rfloor+1, \quad n_{i}:= 2\lfloor(\kappa/4) \sqrt{\pi m_i n /2}\rfloor, \quad T_{i}:=(\kappa-\varepsilon_0)(1-1/2^i)\sqrt{n}.
\end{align*}
where
\begin{align*}
    \Psi(x)=\int_{x}^\infty \frac{\exp(-u^2)}{\sqrt{2\pi}} du.
\end{align*}
We further define
\begin{align*}
    &m_R=2\lfloor\Psi(R \kappa/4+5/\kappa+5) m/2\rfloor+1, \quad 
    n_R=\lfloor (n^{0.002}\kappa/2) \sqrt{\pi m_R n /2}\rfloor, \\
    &n_0=n-\sum_{i=1}^R n_i, \quad m_0=m-\sum_{i=1}^R m_i, \quad T_0=0, \quad T_1=(\kappa-\varepsilon_0)\sqrt{n}/2.
\end{align*}
We define $R$ to be such that $m_{R-1}/m\leq n^{-0.01}<m_R/m$. 
We use $G(i:j)$ to denote the submatrix of $G$ where we keep columns from $\sum_{\ell=0}^{i-1}n_\ell+1$ to $\sum_{\ell=0}^{j}n_\ell$. And we define $G^{(r)}(i:j)$ to denote the $r$-th row of $G(i:j)$. For any vector $X$, we use $X(i:j)$ to denote the subvector of $X$ where we keep entries from $\sum_{\ell=0}^{i-1}n_\ell+1$ to $\sum_{\ell=0}^{j}n_\ell$. Further we define
\begin{align*}
    S(i:j):=G(i:j) X(i:j).
\end{align*}
This is a vector that records the corresponding inner product from round $i$ to $j$. Our algorithm will consists of a few steps. The first round and the last will be slightly different from the rest, but the general idea is similar. During each round $i$, we would look at $S(0:i-1)$ and select rows with the strongest opinions, i.e. select rows with the largest $|S^{(r)}(0:i-1)|$. When $S^{(r)}(0:i-1)$ is positive, row $r$ prefers to have $S^{(r)}(i:i)$ negative to reduce the absolute value of the total sum $S^{(r)}(0:i)$. Similarly, when $S^{(r)}(0:i-1)$ is negative, we would prefer to have a positive $S^{(r)}(i:i)$. We therefore consider a weighted majority vote procedure. For round $i$, define $\cR_i$ to be the set of $m_i$ rows with the largest $|S^{(r)}(0:i-1)|$ and define $\cC_i$ to be the index set of columns  $\{j:\sum_{\ell=0}^{i-1}n_\ell+1\leq j\leq \sum_{\ell=0}^{j}n_\ell\}$. In the following, for any positive integer $i$, we denote $[i]=\{1,2,\cdots,i\}$. And we use $\lfloor x \rfloor$ to denote the floor function.
\begin{itemize}
    \item Round 0. At round $0$, for $j\in \cC_0$, we assign $X_j$ arbitrary values.
    \item Round 1. For row $r$, define
    \begin{align*}
        \ell_r=\lfloor|S^{(r)}(0:0)|\sqrt{\pi m /2} \rfloor.
    \end{align*}
    We define a weight matrix $W$ of size $m$ by $n_1$ in the following way. For any $1\leq j\leq n_1$, define 
    \begin{align*}
        W_{r,j}:=
        \begin{cases}
         -\sgn(S^{(r)}(0:0)) G_{r,n_0+j} ,\quad &\text{if } j \leq \ell_r,\\
         -\sgn(S^{(r)}(0:0)), &\text{else}.
        \end{cases}
    \end{align*}
    Then, for any $j\in \cC_1$, define
    \begin{equation}\label{eq:major}
        X_j=\sgn\left(\sum_{r\in [m]} W_{r,j-n_0}\right).
    \end{equation}
    In short, this procedure pushes the row sum in the direction pointing to zero. Equation \eqref{eq:major} is effectively taking a majority vote of the weight in the corresponding columns.
\item Round $i$. For each round $i$ with $2\leq i\leq R-1$, for $j\in \cC_i$, define
\begin{align*}
    X_j=\sgn\left(\sum_{r\in \cR_i} -\sgn(S^{(r)}(0:i-1)) G_{r,j}\right).
\end{align*}
During each round, we push rows in $\cR_i$ to the direction of the origin by roughly a constant amount based on our design of $m_i$ and $n_i$. This is different to the original algorithm in \cite{kim1998covering}. In the SBP model, we want to keep a small step size for most rows to avoid a flipped sign and thus a larger absolute value of the new row sums. 
\item Step $R$. The last step is similar to Step 1. We define a weight matrix $\cW$ of size $m_R$ by $n_R$. For $r\in \cR_R$, define
    \begin{align*}
        \ell_r=\lfloor|S^{(r)}(0:R-1)|\sqrt{\pi m_R /2} \rfloor.
    \end{align*}
    For any $1\leq j\leq n_R$ and $r\in \cR_R$, define 
    \begin{align*}
        \cW_{r,j}:=
        \begin{cases}
         -\sgn(S^{(r)}(0:R-1)) G_{r,n-n_R+j} ,\quad &\text{if } j \leq \ell_r\\
         -\sgn(S^{(r)}(0:R-1)), &\text{else}.
        \end{cases}
    \end{align*}
    And for any $j\in \cC_R$, define
    \begin{align}\label{eq:major2}
        X_j=\sgn\left(\sum_{r\in \cR_R} \cW_{r,j+n-n_0}\right).
    \end{align}
    Similar to Step 1, this procedure pushes the row sum in the direction pointing to zero.
\end{itemize}

\subsection{Majority algorithm for asymmetric perceptron}
Notations are kept the same as in the previous section. Let $R$ be the number of rounds. We define
\begin{align*}
&\varepsilon_0=|\kappa|/10, \quad C_\kappa=10-\min(\kappa,0), \quad \alpha_0=1/(100C_\kappa^2),
\quad  m_1=m, \quad n_1=\lfloor C_\kappa\sqrt{\pi mn/2}\rfloor.
\end{align*}
For any $2\leq i\leq R-1$, define
\begin{align*}
    m_{i}:=2\lfloor\Psi(i+5) m/2\rfloor+1, \quad n_{i}:= \lfloor\sqrt{2 \pi m_i n }\rfloor, \quad T_{i}:=(\kappa+\varepsilon_0+1/2^i)\sqrt{n}.
\end{align*}
We further define
\begin{align*}
    &m_R=2\lfloor\Psi(R+5) m/2\rfloor+1, \quad 
    n_R=\lfloor n^{0.002} \sqrt{2\pi m_R n }\rfloor, \\
    &n_0=n-\sum_{i=1}^R n_i, \quad m_0=m-\sum_{i=1}^R m_i, \quad T_0=(\kappa+\varepsilon_0+1)\sqrt{n}, \quad T_1=(\kappa+\varepsilon_0+1/2)\sqrt{n}.
\end{align*}
We define $R$ to be such that $m_{R-1}/m\leq n^{-0.01}<m_R/m$. The algorithm is summarized as follows.
\begin{itemize}
    \item Round 0. At round $0$, for $j\in \cC_0$, we assign $X_j$ arbitrary values.
    \item Round $i$. For each round $i$ with $1\leq i\leq R$, for $j\in \cC_i$, define
\begin{align*}
    X_j=\sgn\left(\sum_{r\in \cR_i} G_{r,j}\right).
\end{align*}
\end{itemize}

\section{Tree indexed solutions}
\subsection{Cluster of large diameter}
We will define families of solutions indexed by a tree. Intuitively, the tree is constructed in the following way. We start with any vector $v$ of length $n_0$. Consider any vector $v^{+1}$, which we get by flipping one entry of $v$. We construct a tree where two vertices are attached to the root, each representing $v$ and $v^{+1}$. Apply our multiscale majority algorithm to $G$ and take round 0 to be $v$ and $v^{+1}$ respectively. If we denote the outputs of our algorithm in round 1 as $\cA_1(v)$ and $\cA_1(v^{+1})$, then we can consider a set of vectors of length $n_1$ that interpolate between the two vectors $\cA_1(v)$ and $\cA_1(v^{+1})$. More precisely, we start with $\cA_1(v)$, flip one entry at a time and get $\cA_1(v^{+1})$ eventually. For the tree structure, we construct a new layer of the tree by attaching all such interpolation vectors as children to $v$ and $\cA_1(v^{+1})$ as a child to $v^{+1}$. Next, we can repeat the procedure: apply round 2 of our algorithm to each branch of the tree, with step 0 equals vector in the first layer and step 1 equals vector in the second layer. Then we can further interpolate between outputs. In the end, each vertex in the tree will represent sections of solutions of the form $X(i:i)$. And each branch will be a solution $X(0:R)$. We will show that such solutions on the leaves are connected and thus establish the theorem. See Figure \ref{fig:tree} for an example. 
\begin{figure}
\includegraphics[width=\textwidth]{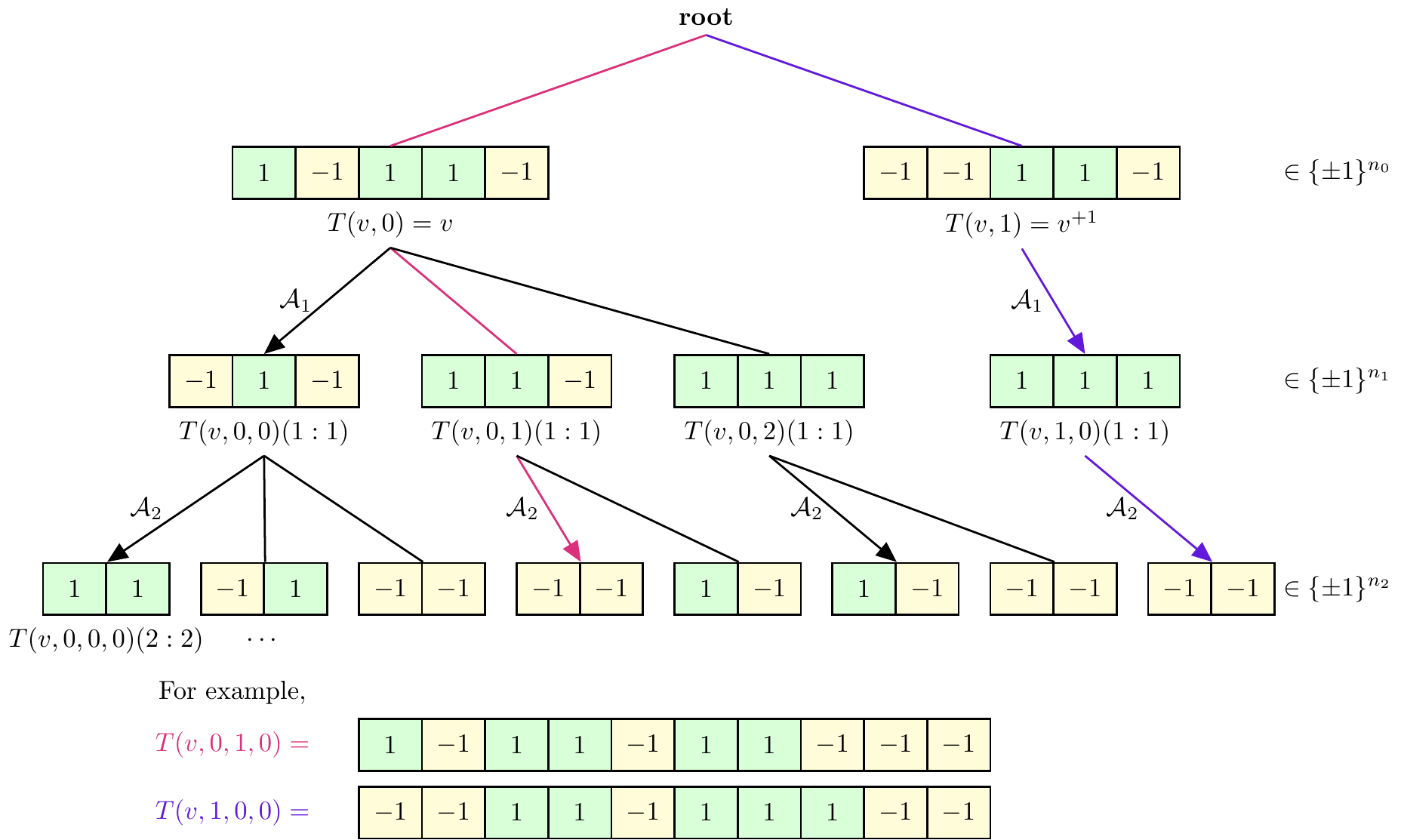}
\caption{This figure gives an example of the tree structure $T(v,h_0,\cdots,h_R)$. Here $R=2$. Each arrow with $\cA$ indicates that the segments are outputs of the algorithm. The other segments are obtained through interpolation. }
\label{fig:tree}
\end{figure}

Intuitively, the segments obtained from outputs of our algorithm are good ones. After each round, the row sums are more likely to satisfy the constraints, because of the effect of the weighted majority vote. If we look at the interpolations between two outputs, they should be relatively good as well, improving the empirical distribution of row sums at each round. Actually we can show any such vectors obtained by interpolations is a solution with probability larger than $\exp(-n^\veps)$. As we can bound the size of the tree, we can obtain the existence of clusters by a union bound. 

Now, we carry out the formal definition. For two vectors $v_1$ and $v_2$ in $\{\pm 1\}^\ell$, we define $d(v_1,v_2)$ to be their Hamming distance
and define $H(v_1,v_2):=\{j\in [\ell]: v_1(j)\neq v_2(j)\}$. Notice that $|H(v_1,v_2)|=d(v_1,v_2)$. For any $0\leq k\leq d(v_1,v_2)$, we write $H(v_1,v_2,k)$ as the set of $k$ smallest indices in $H(v_1,v_2)$. Now, for any $0\leq k\leq d(v_1,v_2)$, we define an interpolation vector $\cI(v_1,v_2,k)$ of length $\ell$ to be
\begin{align*}
    \cI(v_1,v_2,k)(i)=
    \begin{cases}
     v_2(i), \quad\text{if } i\in H(v_1,v_2,k)\\
     v_1(i), \quad\text{else }.
    \end{cases}
\end{align*}
In other words, $\cI(v_1,v_2,k)$ is an interpolation of $v_1$ and $v_2$ where we flipped $k$ entries in $v_1$ with the smallest indices. 

For $1\leq i \leq R$, we use $\cA_i$ to denote the step $i$ of our algorithm. More specifically, $\cA_i$ takes $X(0:i-1)$ as input and outputs the corresponding $X(i:i)$. Write $\cA$ as the whole algorithm, i.e. $\cA$ takes $X(0:0)$ as input and outputs $X$.

Now, we define a series of vectors of length $n$. For any vector $v \in \{\pm 1\}^{n_0}$ and $0\leq k \leq R$, we define a series of vectors $T(v,h_0,h_1,\cdots,h_k)$ inductively as follows.
\begin{itemize}
    \item When $k=0$, for $0\leq h_0\leq n_0$, define
    \begin{align*}
        T(v,h_0):=\cI(v,-v,h_0).
    \end{align*}
    For $0\leq h_0\leq n_0-1$, we define the neighbor on the right of $T(v,h_0)$ to be 
    \begin{align*}
        T^{+1}(v,h_0):=\cI(v,-v,h_0+1).
    \end{align*}
    We write $D_0:=n_0$.
    \item When $1\leq k \leq R$, define
    \begin{align*}
        D_k:=d(\cA_k(T(v,h_0,h_1,\cdots,h_{k-1})),\cA_k(T^{+1}(v,h_0,h_1,\cdots,h_{k-1}))).
    \end{align*}
    Note that actually each $D_k$ depends on $h_0, \cdots, h_{k-1}$. We suppress the dependence here for clarity. We will define the vectors $T(v,h_0,h_1,\cdots,h_k)$ for any $0\leq h_0\leq D_0-1$ and $0\leq h_i\leq D_i$ when $1\leq i\leq k$, or $h_0=D_0$ and $h_i=0$ when $1\leq i\leq k$. We understand these inequalities as if they are imposed step by step. To start with, the length of $T(v,h_0,h_1,\cdots,h_k)$ is
    \begin{align*}
        |T(v,h_0,h_1,\cdots,h_k)|=\sum_{\ell=0}^k n_\ell.
    \end{align*}
    For the first $\sum_{\ell=0}^{k-1} n_\ell$ entries, we denote them as $T(v,h_0,h_1,\cdots,h_k)(0:k-1)$ and define, 
    \begin{align*}
       T(v,h_0,h_1,\cdots,h_k)(0:k-1):=T(v,h_0,h_1,\cdots,h_{k-1}).
    \end{align*}
    Further, define the $\sum_{\ell=0}^{k-1} n_\ell+1$ to $\sum_{\ell=0}^{k} n_\ell$ entries of $T(v,h_0,h_1,\cdots,h_k)$ to be
    \begin{align*}
        T(v,h_0,h_1,\cdots,h_k)(k:k):=\cI(\cA_k(T(v,h_0,h_1,\cdots,h_{k-1})),\cA_k(T^{+1}(v,h_0,h_1,\cdots,h_{k-1})),h_k).
    \end{align*}
    And for $0\leq h_k\leq D_k-1$, we write
    \begin{align*}
        T^{+1}(v,h_0,h_1,\cdots,h_k):=T(v,h_0,h_1,\cdots,h_k+1).
    \end{align*}
    When $h_k=D_k$, we define
    \begin{align*}
        &T^{+1}(v,h_0,h_1,\cdots,h_k)(k:k):=T(v,h_0,h_1,\cdots,h_k)(k:k),\\
        &T^{+1}(v,h_0,h_1,\cdots,h_k)(0:k-1):=T^{+1}(v,h_0,h_1,\cdots,h_{k-1}).
    \end{align*}
\end{itemize}
Note that from the construction, we have that all the vectors $T(v,h_0,h_1,\cdots,h_R)$ are of length $n$ and
\begin{align*}
    d(T(v,h_0,h_1,\cdots,h_R),T^{+1}(v,h_0,h_1,\cdots,h_R))=1.
\end{align*}
Moreover, we have that
\begin{align*}
    T^{+1}(v,D_0-1,D_1,\cdots,D_R)=\cA(-v)=T(v,D_0,0,\cdots,0).
\end{align*}
In words, we defined a series of vectors $T$ that interpolate between $\cA(v)$ and $\cA(-v)$. 

In order to prove Theorem \ref{t:octupus} and Theorem \ref{t:algo}, we will show that vectors of the form $T(v,h_0,h_1,\cdots,h_R)$ are solutions with a large probability. Then by a union bound, we can have the theorem. 

\begin{defn}[Connected]
For two vectors $v_1$ and $v_2$ of length $n$, we define $v_1\sim v_2$ and say $v_1$ and $v_2$ are connected if they lie in the same cluster. 
\end{defn}
For simplicity, we use $\vec h$ to denote $(h_0,\cdots,h_R)$. We write $\vec 0\leq \vec h< \vec D$ to denote the set of inequalities $0\leq h_0\leq D_0-1$ and $0\leq h_i\leq D_i$ when $1\leq i\leq R$. And we write $\vec 0\leq \vec h\leq \vec D$ to denote $0\leq h_0\leq D_0-1$ and $0\leq h_i\leq D_i$ when $1\leq i\leq R$ or $\vec h = (D_0,0,\cdots,0)$. 
\begin{lem}\label{l:connect}
In both SBP and ABP, take $\alpha<\alpha_0$. For any $v\in \{\pm 1\}^ {n_0}$, there exists $\varepsilon>0$ such that for any $\vec 0\leq \vec h\leq \vec D$,
\begin{align*}
    \bP[T(v,h_0,h_1,\cdots,h_{R})\text{ is a solution }] \geq 1-\exp(-n^\varepsilon).
\end{align*}
\end{lem}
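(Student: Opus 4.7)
The plan is to induct on the round index $k$ and show that after round $k$, the partial row sums $S^{(r)}(0:k)$ evaluated on $T(v,h_0,\ldots,h_k)$ lie in a target region for all but a controlled number of rows, with failure probability at most $\exp(-n^{\varepsilon})$. For SBP the invariant reads $|S^{(r)}(0:k)| \leq T_k$ for all but $\lesssim m_{k+1}$ rows, while for ABP it reads $S^{(r)}(0:k) \geq T_k$ with the same exceptional count. The parameters $(m_k,n_k,T_k)$ are tuned precisely so that these counts shrink fast enough that after round $R$ no exceptional rows remain, giving a solution. Independence across rounds is exploited by conditioning on $G(0:k-1)$ before revealing $G(k:k)$.

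First I would establish the drift estimate for the pure algorithm output. At round $k$, the weighted-majority update acts on the top-$m_k$ rows using $n_k$ fresh columns. For each active row $r$, the contribution $S^{(r)}(k:k)$ is a sum over $n_k$ independent $\pm 1$ entries of $G$, each multiplied by signs determined by rows other than $r$; Hoeffding gives concentration within $O(\sqrt{n_k \log n})$ of the conditional mean, and the choice $n_k \asymp \sqrt{m_k n}$ makes this mean move each active row's partial sum toward the target by $T_{k-1}-T_k = \Omega(2^{-k}\sqrt{n})$ (SBP) or an analogous constant shift past $\kappa+\varepsilon_0$ (ABP), with probability $1-\exp(-n^\varepsilon)$. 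A Chernoff count over rows then shows the exceptional set at the end of round $k$ has size $\leq m_{k+1}$ with the same tail.

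Second, I would control the interpolation. The round-$k$ segment $T(v,\vec h)(k:k)$ is either $\cA_k$ applied to $T(v,h_0,\ldots,h_{k-1})$, or an $h_k$-coordinate interpolation between $\cA_k$ applied to this vector and to $T^{+1}(v,h_0,\ldots,h_{k-1})$, which differs from the former in a single coordinate. A sensitivity analysis of $\cA_k$ shows that flipping one input coordinate changes $S^{(r)}(0:k-1)$ by $O(1)$, which does not flip $\sgn(S^{(r)}(0:k-1))$ for any row in $\cR_k$ (since these rows have $|S^{(r)}(0:k-1)| \geq T_{k-1} \gg 1$), and only perturbs $\cR_k$ itself by $O(m/\sqrt{n})$ rows sitting near the inclusion threshold (by Gaussian anticoncentration of the partial sums). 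Propagating this through the sign of the inner sum defining $\cA_k$ gives $D_k = o(n_k)$ with high probability. The interpolation then perturbs any given $S^{(r)}(k:k)$ by a sum of at most $D_k$ independent $\pm 1$ entries of $G$, bounded by $O(\sqrt{D_k \log n})$ sub-Gaussianly, which is dominated by the round's drift.

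Combining drift and interpolation gives the inductive step; iterating rounds $k = 1,\ldots,R$ and taking $\varepsilon$ slightly smaller than $\min_k \varepsilon_k$ yields the claimed tail bound. The final round uses the refined weight matrix $\cW$ whose design (with $\ell_r$ calibrated to $|S^{(r)}(0:R-1)|$) forces every remaining row into the solution region $|S^{(r)}(0:R)| \leq \kappa\sn$ or $S^{(r)}(0:R) \geq \kappa\sn$, completing the proof. The main obstacle will be the interplay of interpolation and drift at intermediate rounds: because both the set $\cR_k$ and the interpolation pattern depend on randomness from earlier rounds, the analysis must be carried out by conditioning on the $\sigma$-algebra generated by $G(0:k-1)$ before exploiting the independence of $G(k:k)$, and the sensitivity bound must hold uniformly over this conditioning. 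This is exactly where the multiscale structure with geometrically decreasing $T_k$ becomes essential: the gap $T_{k-1}-T_k = \Omega(2^{-k}\sqrt n)$ provides the margin needed to absorb the $O(\sqrt{D_k \log n})$ interpolation perturbation at every level.
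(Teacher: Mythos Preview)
Your inductive framework, your treatment of the pure algorithm output, and your last-round analysis are all in line with the paper. The gap is in your interpolation control at intermediate rounds.

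You propose to treat the interpolated segment as a perturbation of the pure output $\cA_k(T(v,\vec h_1))$ and to bound the perturbation by $O(\sqrt{D_k\log n})$. Two problems arise. First, the perturbation is \emph{not} a sum of independent Rademachers: the set of columns on which $\cA_k(T(v,\vec h_1))$ and $\cA_k(T(v,\vec h_2))$ differ is determined by the majority signs $\sgn\bigl(\sum_{r'\in\cR_k}\pm G_{r',j}\bigr)$, which involve the very entries $G_{r,j}$ you are summing, so the sub-Gaussian bound does not apply directly. Second, and more seriously, even granting your heuristic $\Delta:=|\cR_k\triangle\cR_k'|=O(m/\sqrt n)$, propagating through the majority sign gives $D_k$ of order $n_k\cdot\Delta/\sqrt{m_k}$; with $n_k\asymp\sqrt{m_k n}$ this is $\Theta(\Delta\sqrt n)$, and the requirement $\sqrt{D_k\log n}\ll 2^{-k}\sqrt n$ then forces $\Delta\ll 4^{-k}\sqrt n/\log n$. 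The anticoncentration estimate you invoke is not fine enough to deliver this uniformly in $k$ (and proving any anticoncentration for the post-algorithm empirical law of $|S^{(r)}(0{:}k-1)|$ near the $m_k$-th order statistic is itself nontrivial, since that law is no longer Gaussian).

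The paper sidesteps the need to bound $D_k$ altogether. The observation is that the interpolated segment $T(v,\vec h)(k{:}k)$ agrees coordinatewise with $\cA_k(T(v,\vec h_1))$ on some set $\cC_{k,1}\subset\cC_k$ and with $\cA_k(T(v,\vec h_2))$ on the complement $\cC_{k,2}$. Hence
\[
S^{(r)}(v,\vec h)(k{:}k)=\underbrace{\sum_{j\in\cC_{k,1}}G_{r,j}\,\cA_k(T(v,\vec h_1))_j}_{S_1^{(r)}}+\underbrace{\sum_{j\in\cC_{k,2}}G_{r,j}\,\cA_k(T(v,\vec h_2))_j}_{S_2^{(r)}},
\]
and each piece inherits the full majority-vote structure (over $\cR_k$ and $\cR_k'$ respectively) on $n_{k,1}$ and $n_{k,2}$ fresh columns. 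For rows $r\in\cR_k\cap\cR_k'$ both pieces drift in the correct direction and the Kim--Roche machinery applies to $S_1^{(r)}+S_2^{(r)}$ exactly as to the pure output, regardless of $n_{k,1}/n_{k,2}$. For rows in $\cR_k\triangle\cR_k'$ or in $(\cR_k\cup\cR_k')^c$ one uses instead that such rows already satisfy $|S^{(r)}(0{:}k-1)|\le T_{k-1}+2$, so the undrifted piece is harmless. No smallness of $D_k$ or of $|\cR_k\triangle\cR_k'|$ is required. This decomposition is the missing idea; once you have it, the induction goes through with essentially the same estimates as for the pure algorithm.
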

In other words, for any $\vec 0\leq \vec h< \vec D$, $T(v,h_0,h_1,\cdots,h_{R})$ and $T^{+1}(v,h_0,h_1,\cdots,h_{R})$ are connected with high probability. Then as each $D_i\leq n$, we have 
\begin{align*}
    \bP[\forall \vec 0\leq \vec h< \vec D, T(v,h_0,h_1,\cdots,h_R)\sim T^{+1}(v,h_0,h_1,\cdots,h_R)]\geq 1-3(n+1)^{R+1}\exp(-n^\varepsilon).
\end{align*}
Note that by design, $R=O(\sqrt{\log n})$. Therefore the right hand side converges to $1$ as $n$ goes to infinity. For SBP, note that as 
\begin{align*}
    T(v,0,\cdots,0)=\cA(v)=-\cA(-v)=-T^{+1}(v,D_0-1,D_1,\cdots,D_R),
\end{align*}
we have that $\cA(v)$ is connected to $-\cA(v)$ with high probability, and thus prove Theorem \ref{t:octupus}. Theorem \ref{t:algo} is also a direct consequence by taking all $h_i=0$. The computational complexity follows directly from construction.

For ABP, note that for any $\varepsilon>0$ and $\alpha<\min(\alpha_0,\veps^2/(10C_\kappa^2))$, the design of our algorithm guarantees that $n_0/n>1-\varepsilon$. Then,
\begin{align*}
    \cA(v)=T(v,0,\cdots,0), \quad \cA(-v)=T^{+1}(v,D_0-1,D_1,\cdots,D_{R}),
\end{align*}
with $d(\cA(v),\cA(-v))\geq (1-\varepsilon)n$. By the above computation, we have that $\cA(v)$ is connected to $\cA(-v)$ with high probability, and thus prove Theorem \ref{t:octupus}. Theorem \ref{t:algo} is also a direct consequence by taking all $h_i=0$. And the computational complexity follows directly from construction.

\subsection{Linear sized cluster}
To show the existence of linear sized clusters, we construct similar tree structures based on our multiscale majority algorithm. 

\subsubsection{Notations}
We start with a few notations. In the SBP model for any $\kappa'<\kappa$, we set the parameters of our algorithm such that
\begin{align*}
    \varepsilon_0=(\kappa-\kappa')/2, \quad \quad \varepsilon_0/\sqrt{5d}\geq C_\kappa-\log(d)\kappa/2.
\end{align*}
The rest are taken to be the same. In the ABP model for any $\kappa'>\kappa$, we set the parameters of our algorithm such that
\begin{align*}
    \varepsilon_0=(\kappa'-\kappa)/2, \quad \quad \varepsilon_0/\sqrt{5d}\geq C_\kappa-2\log(d).
\end{align*}
For a vector $v \in \{\pm 1\}^n$ and $1\leq k\leq n$, we define $\sF(v,k)$ to be a vector in $\{\pm 1\}^n$ that
\begin{align*}
    \sF(v,k)(i)=
    \begin{cases}
     -v(i), \quad&\text{if } i\leq k\\
     v(i), \quad&\text{else }.
    \end{cases}
\end{align*}
In other words, if we flip the first $k$ entries of $v$, we get $\sF(v,k)$. For a vector $v \in \{\pm 1\}^n$ and $1\leq k\leq n$, we define $v([k])$ to be the vector that consists of the first $k$ entries of $v$. More precisely, $v([k])\in \{\pm 1\}^k$ with $v([k])(i)=v(i)$ for any $1\leq i \leq k$. Note that this is different to the definition of $v(i:j)$.

Now we make a few definitions with relate to our algorithm. For a vector $v$ of length $\ell> n_0$, define $\sL(v)\in \mathbb{Z}_+$ such that
\begin{align*}
    \sum_{s=0}^{\sL(v)-1}n_s\leq \ell < \sum_{s=0}^{\sL(v)} n_s.
\end{align*}
Recall that we uses $\cA_i$ to denote the $i$-th round of our algorithm that takes a vector of length $\sum_{s=0}^{i-1} n_s$ as input and outputs a vector of length $n_i$. 
Further, we define $\cA_{0:\sL(v)}(v)$ to be a vector of length $\sum_{s=0}^{\sL(v)} n_s$ such that
\begin{align*}
    \cA_{0:\sL(v)}(v)(i):=
    \begin{cases}
     v(i), \quad&\text{if } i\leq \ell\\
     \cA_{\sL(v)}(v([\sum_{s=0}^{\sL(v)-1} n_s]))(i-\sum_{s=0}^{\sL(v)-1}n_s), \quad&\text{else }.
    \end{cases}
\end{align*}
In other words, $\cA_{0:\sL(v)}(v)$ is a vector we get by augmenting $v$ with outputs of our algorithm.

For a vector $v$ of length $\ell$, from now on in this section, we use $\cA(v)$ to denote the output of our algorithm when we take $\cA_{0:\sL(v)}(v)$ as the starting vector and round $\sL(v)+1$ as the starting round. For any $0\leq k\leq \sL(v)$, we define $\cA_{k}(v)=\cA_{0:\sL(v)}(v)(k:k)$. Similarly, for $\sL(v)+1\leq k\leq R$, we define $\cA_k(v)$ to be the vector of length $n_k$ which equals the output in round $k$ of our algorithm.

Our strategy to show connectivity is divided into two steps. Define $\sd:=\lceil dn\rceil$. If $X$ is a $\kappa'$-solution, we firstly show that $\cA(X([n-\sd]))\sim X$. Then we show that $\cA(\sF(X,\sd)([n-\sd]))\sim \cA(X([n-\sd]))$. Note the distance between $\cA(\sF(X,\sd)([n-\sd]))$ and $X$ is at least $\sd$. This guarantees that the diameter of the connected set is at least $\sd$.

\subsubsection{First Connectivity}
We construct the necessary tree structure to show $\cA(X([n-\sd]))\sim X$ with high probability. To this end, we will show that for any $n-\sd\leq \ell \leq n-1$, 
\begin{align*}
    \cA(X([\ell]))\sim \cA(X([\ell+1]))
\end{align*}
with high probability. For each $n-\sd\leq \ell \leq n-1$, we will define a series of vectors of length $n$ indexed by a tree. For any $\sL(X([\ell]))+1\leq k\leq R$, the tree is constructed by defining a series of vectors $T(X,\ell,h_{\sL(X([\ell]))+1},\cdots,h_k)$ inductively as follows. For notation purpose, we write $\sL(X([\ell]))$ as $\sL$, $\cA_{0:\sL}(X([\ell]))$ as $X_1$ and $\cA_{0:\sL}(X([\ell+1]))$ as $X_2$ for short in this subsection. Note that both $X_1$ and $X_2$ have length $\sum_{s=0}^\sL n_{s}$ and differs by at most one entry by definition. 
\begin{itemize}
    \item For $k=\sL+1$, we write $D_{k}:=d(\cA_k(X_1)),\cA_k(X_2))$. For $0\leq h_k\leq D_{k}$, $T(X,\ell,h_k)$ has length $\sum_{s=0}^k n_s$ with
    \begin{align*}
        &T(X,\ell,h_k)(0:k-1):=X_1,\\
        &T(X,\ell,h_k)(k:k):=\cI(\cA_k(X_1)),\cA_k(X_2)),h_k).
    \end{align*}
    For $0\leq h_k\leq D_k-1$, we define the neighbor on the right of $T(X,\ell,h_k)$ to be such that
    \begin{align*}
        &T^{+1}(X,\ell,h_k)(0:k-1):=T(X,\ell,h_k+1).
    \end{align*}
    And for $h_k=D_k$, we define the neighbor on the right of $T(X,\ell,h_k)$ to be such that
        \begin{align*}
        &T^{+1}(X,\ell,h_k)(0:k-1):=X_2,\\
        &T^{+1}(X,\ell,h_k)(k:k):=\cA_k(X_2).
    \end{align*}

    \item When $\sL+2\leq k \leq R$, define
    \begin{align*}
        D_k:=d(\cA_k(T(X,\ell,h_{\sL+1},\cdots,h_{k-1})),\cA_k(T^{+1}(X,\ell,h_{\sL+1},\cdots,h_{k-1}))).
    \end{align*}
    We will define $T(X,\ell,h_{\sL+1},\cdots,h_k)$
    for $0\leq h_{\sL+1}\leq D_{\sL+1}-1$ and $0\leq h_i\leq D_i$ when $\sL+2\leq i\leq k$ or $h_{\sL+1}=D_{\sL+1}$ and $h_i=0$ when $\sL+2\leq i\leq k$. To start with, the length of $T(X,\ell,h_{\sL+1},\cdots,h_k)$ is
    \begin{align*}
        |T(X,\ell,h_{\sL+1},\cdots,h_k)|=\sum_{s=0}^k n_s.
    \end{align*}
    For the first $\sum_{s=0}^{k-1} n_s$ entries, we denote them as $T(X,\ell,h_{\sL+1},\cdots,h_k)(0:k-1)$ and define, 
    \begin{align*}
       T(X,\ell,h_{\sL+1},\cdots,h_k)(0:k-1):=T(X,\ell,h_{\sL+1},\cdots,h_{k-1}).
    \end{align*}
    Further, define the $\sum_{s=0}^{k-1} n_s+1$ to $\sum_{s=0}^{k} n_s$ entries of $T(X,\ell,h_{\sL+1},\cdots,h_k)$ to be
    \begin{align*}
        T(X,\ell,h_{\sL+1},\cdots,h_k)(k:k):=\cI(&\cA_k(T(X,\ell,h_{\sL+1},\cdots,h_{k-1})),\\
        &\cA_k(T^{+1}(X,\ell,h_{\sL+1},\cdots,h_{k-1})),h_k).
    \end{align*}
    And for $0\leq h_k\leq D_k-1$, we write
    \begin{align*}
        T^{+1}(X,\ell,h_{\sL+1},\cdots,h_k):=T(X,\ell,h_{\sL+1},\cdots,h_k+1).
    \end{align*}
    When $h_k=D_k$, we define
    \begin{align*}
        &T^{+1}(X,\ell,h_{\sL+1},\cdots,h_k)(k:k):=T(X,\ell,h_{\sL+1},\cdots,h_k)(k:k),\\
        &T^{+1}(X,\ell,h_{\sL+1},\cdots,h_k)(0:k-1):=T^{+1}(X,\ell,h_{\sL+1},\cdots,h_{k-1}).
    \end{align*}
\end{itemize}
Similar to the previous subsection, we use $\vec h$ to denote $(h_{\sL+1},\cdots,h_R)$. We write $\vec 0\leq \vec h< \vec D$ to denote the set of inequalities $0\leq h_{\sL+1}\leq D_{\sL+1}-1$ and $0\leq h_i\leq D_i$ when $\sL+2\leq i\leq R$. And we write $\vec 0\leq \vec h\leq \vec D$ to denote all $\vec h$ such that $0\leq h_{\sL+1}\leq D_{\sL+1}-1$ and $0\leq h_i\leq D_i$ when $\sL+2\leq i\leq R$ or $\vec h = (D_{\sL+1},0,\cdots,0)$. 
\begin{lem}\label{l:connect2}
Let $0<\kappa'<\kappa$ (in the SBP model) or $\kappa'>\kappa$ (in the ABP model). For any $\kappa'$-solution $X$, there exists $d>0$ and $\veps>0$ such that for any $n-\sd\leq \ell \leq n-1$ and $\vec 0\leq \vec h\leq \vec D$,
\begin{align*}
    \bP[T(X,\ell,h_{\sL+1},\cdots,h_{R}) \text{ is a solution }] \geq 1-\exp(-n^\varepsilon).
\end{align*}
\end{lem}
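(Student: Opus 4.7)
The proof will closely mirror that of Lemma~\ref{l:connect}, the key new ingredient being the $\kappa'$-solution hypothesis on $X$, which provides an initial row-wise margin of size $\varepsilon_0 \sqrt{n}$ with $\varepsilon_0 = (\kappa-\kappa')/2$ (SBP) or $(\kappa'-\kappa)/2$ (ABP). First, I would track the partial row sums of the prefix $X([\ell])$. For each row $r$, since $X$ is a $\kappa'$-solution and only the last $n - \ell \leq \sd = \lceil d n \rceil$ coordinates have been dropped, the prefix sum $\sum_{i\leq \ell} G_{r,i} X_i$ differs from $G_r X$ by $\sum_{i>\ell} G_{r,i} X_i$, which by Hoeffding is at most $\sqrt{5 \sd \log n}$ uniformly in $r$ with probability $1 - o(1)$. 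The tuned condition $\varepsilon_0/\sqrt{5d}\geq C_\kappa - (\log d)\kappa/2$ (and its ABP analog) ensures that the resulting prefix sum already sits inside the round-$(\sL+1)$ threshold $T_{\sL+1}$ with slack of order $\varepsilon_0 \sqrt{n}$, giving the algorithm a valid starting point.

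Given this initialization, the analysis of rounds $k = \sL+1, \ldots, R$ proceeds exactly as in Lemma~\ref{l:connect}. At each such round the weighted-majority or plain-majority construction of $\cA_k$ shifts every selected row's partial sum by a controlled mean amount, determined by the choice of $n_k$ and $m_k$, so that after $R = O(\sqrt{\log n})$ rounds the cumulative sum satisfies the target constraint. Concentration of the shift is obtained via a Hoeffding/Azuma estimate on the $n_k$ fresh $\pm 1$ entries of $G$ participating in the majority vote, yielding a per-row, per-round failure probability bounded by $\exp(-n^\varepsilon)$ for some $\varepsilon > 0$ fixed by $n_{R-1}/m \leq n^{-0.01}$ and $n_R = \lfloor n^{0.002} \sqrt{2\pi m_R n /2}\rfloor$. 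The excess margin $\varepsilon_0 \sqrt{n}$ inherited from the $\kappa'$-solution property is preserved throughout, since the thresholds $T_k$ were calibrated for the $\kappa$-problem while the instance at hand enjoys a $\kappa'$-margin.

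Finally I would handle the interpolation coordinate-by-coordinate. A generic $T(X,\ell,\vec h)$ arises from either $\cA(X([\ell]))$ or $\cA(X([\ell+1]))$ by flipping a prefix of a single block of size $D_k \leq n_k$. A single-bit flip at column $i$ changes each row sum by $\pm 2 G_{r,i}/\sqrt{n}$, so the worst-case interpolation perturbation of row $r$ over a block is a partial sum of independent $\pm 1$ terms, bounded by $O(\sqrt{n_k \log n})$ simultaneously for all rows and all $h_k \in \{0, \ldots, D_k\}$ by a uniform Hoeffding bound. This fluctuation is absorbed by the residual $\varepsilon_0 \sqrt{n}$ slack. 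A union bound over the tree, of size at most $(n+1)^{R+1} = n^{O(\sqrt{\log n})}$, and over the at most $m \leq \alpha n$ rows, preserves the $1 - \exp(-n^\varepsilon)$ bound (after shrinking $\varepsilon$).

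The principal obstacle is the first step: certifying the initial margin. One must show that $\varepsilon_0 \sqrt{n}$ simultaneously covers (i) the Hoeffding tail $\sqrt{5 \sd \log n}$ from dropping the final $\sd$ coordinates of $X$, and (ii) the initial threshold gap $T_{\sL+1}$ that the multiscale algorithm expects to see. This is exactly what the parameter constraint $\varepsilon_0/\sqrt{5d}\geq C_\kappa - (\log d)\kappa/2$ (resp.\ $\varepsilon_0/\sqrt{5d}\geq C_\kappa - 2\log(d)$ in ABP) enforces, and it dictates how small $d$ must be chosen in terms of $\kappa - \kappa'$ (resp.\ $\kappa' - \kappa$). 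Once this initial estimate is secured, the remaining rounds and the interpolation step follow by rerunning the arguments of Lemma~\ref{l:connect} with the same parameter choices.
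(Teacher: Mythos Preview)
There is a genuine gap in your initialization step, and it stems from using a \emph{uniform-over-rows} Hoeffding bound where the paper uses an \emph{empirical distribution} bound. You bound the suffix $\sum_{i>\ell} G_{r,i}X_i$ by $\sqrt{5\sd\log n}=\sqrt{5dn\log n}$ simultaneously for all $m$ rows, and then claim the margin $\varepsilon_0\sqrt n$ absorbs it. But $\sqrt{5dn\log n}\le \varepsilon_0\sqrt n$ forces $d\le \varepsilon_0^2/(5\log n)\to 0$, so your argument only yields sublinear diameter $\Theta(n/\log n)$, not the constant $d>0$ the lemma requires. The parameter condition $\varepsilon_0/\sqrt{5d}\ge C_\kappa-\log(d)\,\kappa/2$ has no $\log n$ in it; it is not encoding a union-bound cost but rather ensuring that the Gaussian tail $\Psi\big((\eta-\kappa'\ell/\sqrt n)/\sqrt{n-\ell}\big)\approx \Psi(\varepsilon_0/\sqrt d)$ at $\eta=T_\sL$ is small enough to be $\le m_{\sL+1}/m$. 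A related but secondary issue: once you condition on $X$ being a $\kappa'$-solution, the entries $G_{r,i}X_i$ are no longer independent, so Hoeffding does not apply directly; the paper handles this via the bridge-type estimate of Lemma~\ref{l:bridge}.

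The paper's fix is to abandon the uniform requirement at round $\sL$: it does \emph{not} show that every row's prefix sum lies inside $T_\sL$, only that the \emph{fraction} of rows with $|S^{(r)}(0{:}\sL)|\ge\eta$ is controlled by the same inductive envelope $2\cdot 3^\sL(n^{-1/10}+\Psi(\cdot)+\ldots)$ used in Lemma~\ref{l:together2}. This is Lemma~\ref{l:firstinduct1} (resp.\ Lemma~\ref{l:firstinduct1a} for ABP), whose proof couples Lemma~\ref{l:bridge} for $S^{(r)}([\ell])$ with the Lemma~\ref{l:kstep}-type bounds on the algorithmic part $\check S^{(r)}(\sL{:}\sL)$. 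Once this empirical bound at level $\sL$ is in place, the remaining rounds $\sL{+}1,\dots,R$ and the interpolation are handled by rerunning the proof of Lemma~\ref{l:together2} verbatim (Section~\ref{sss:interp}); the outlier rows are absorbed by the multiscale mechanism, not by slack. Your interpolation step has the same defect: bounding block perturbations by $O(\sqrt{n_k\log n})$ uniformly over rows again costs a $\sqrt{\log n}$ factor that cannot be absorbed by $\varepsilon_0\sqrt n$; the paper instead splits $S^{(r)}(k{:}k)$ into the two algorithmic pieces $S_1,S_2$ and bounds each via the same empirical-cdf machinery (Lemmas~\ref{l:roundisum2}--\ref{l:roundisum20}).
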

In other words, for any $\vec 0\leq \vec h< \vec D$, $T(X,\ell,h_{\sL+1},\cdots,h_{R})$ and $T^{+1}(X,\ell,h_{\sL+1},\cdots,h_{R})$ are connected with high probability. Then as each $D_i\leq n$, we have 
\begin{align*}
    \bP[\forall \vec 0\leq \vec h< \vec D, T(X,\ell,h_{\sL+1},\cdots,h_{R})\sim T^{+1}(X,\ell,h_{\sL+1},\cdots,h_{R})]\geq 1-3(n+1)^{R+1}\exp(-n^\varepsilon).
\end{align*}
Note that as 
\begin{align*}
    &T(X,\ell,0,\cdots,0)=\cA(X([\ell])),\\
    &T^{+1}(X,\ell,D_{\sL+1}-1,D_{\sL+2},\cdots,D_R)=\cA(X([\ell+1])),
\end{align*}
$\cA(X([\ell]))$ is connected to $\cA(X([\ell+1]))$ with probability larger than $1-3(n+1)^{R+1}\exp(-n^\varepsilon)$. Therefore, $\cA(X([n-\sd]))$ is connected to $X$ with high probability. 

\subsubsection{Second Connectivity}
Similar to the previous section, we construct a tree structure to show $\cA(\sF(X,\sd)([n-\sd]))\sim \cA(X([n-\sd]))$. To this end, we will show that for any $0\leq \ell\leq \sd-1$, 
\begin{align*}
    \cA(\sF(X,\ell)([n-\sd]))\sim \cA(\sF(X,\ell+1)([n-\sd]))
\end{align*}
with high probability. For each $0\leq \ell\leq \sd-1$, we will define a series of vectors of length $n$ indexed by a tree. For any $\sL(X[n-\sd])\leq k\leq R$, the tree is constructed by defining a series of vectors $T_2(X,\ell, h_{\sL(X([n-\sd]))},\cdots,h_k)$ inductively as follows. For notation purpose, we write $\sL(X([n-\sd]))$ as $\sL$, $\sF(X,\ell)([n-\sd])$ as $X_1$ and $\sF(X,\ell+1)([n-\sd])$ as $X_2$. Note that $X_1$ and $X_2$ both have length $n-\sd$ and they differ by at most one entry.
\begin{itemize}
    \item For $k=\sL$, we write $D_{k}:=d(\cA_k(X_1),\cA_k(X_2))$. For $0\leq h_k\leq D_{k}$, $T_2(X,\ell,h_k)$ has length $\sum_{s=0}^\sL n_s$ with
    \begin{align*}
        &T_2(X,\ell,h_k)(0:k-1):=X_1(0:k-1),\\
        &T_2(X,\ell,h_k)(k:k):=\cI(\cA_k(X_1)),\cA_k(X_2)),h_k).
    \end{align*}
    For $0\leq h_k\leq D_k-1$, we define the neighbor on the right of $T_2(X,\ell,h_k)$ to be such that
    \begin{align*}
        T_2^{+1}(X,\ell,h_k):=T_2(X,\ell,h_k+1)
    \end{align*}
    And for $h_k=D_k$, we define the neighbor on the right of $T_2(X,\ell,h_k)$ to be such that
        \begin{align*}
        &T_2^{+1}(X,\ell,h_k)(0:k-1):=X_2(0:k-1),\\
        &T_2^{+1}(X,\ell,h_k)(k:k):=\cA_k(X_2).
    \end{align*}

    \item When $\sL+1\leq k \leq R$, define
    \begin{align*}
        D_k:=d(\cA_k(T_2(X,\ell,h_{\sL},\cdots,h_{k-1})),\cA_k(T_2^{+1}(X,\ell,h_{\sL},\cdots,h_{k-1}))).
    \end{align*}
    We will define $T_2(X,\ell,h_{\sL},\cdots,h_k)$ for
    $0\leq h_\sL\leq D_\sL-1$ and $0\leq h_i\leq D_i$ when $\sL+1\leq i\leq k$, or $h_\sL=D_\sL$ and $h_i=0$ when $\sL+1\leq i\leq k$. To start with, the length of $T_2(X,\ell,h_{\sL},\cdots,h_k)$ is
    \begin{align*}
        |T_2(X,\ell,h_{\sL},\cdots,h_k)|=\sum_{s=0}^k n_s.
    \end{align*}
    For the first $\sum_{s=0}^{k-1} n_s$ entries, we denote them as $T_2(X,\ell,h_{\sL},\cdots,h_k)(0:k-1)$ and define, 
    \begin{align*}
       T_2(X,\ell,h_{\sL},\cdots,h_k)(0:k-1):=T_2(X,\ell,h_{\sL},\cdots,h_{k-1}).
    \end{align*}
    Further, define the $\sum_{s=0}^{k-1} n_s+1$ to $\sum_{s=0}^{k} n_s$ entries of $T_2(X,\ell,h_{\sL},\cdots,h_k)$ to be
    \begin{align*}
        T_2(X,\ell,h_{\sL},\cdots,h_k)(k:k):=\cI(\cA_k(T_2(X,\ell,h_{\sL},\cdots,h_{k-1})),\cA_k(T_2^{+1}(X,\ell,h_{\sL},\cdots,h_{k-1})),h_k).
    \end{align*}
    And for $0\leq h_k\leq D_k-1$, we write
    \begin{align*}
        T_2^{+1}(X,\ell,h_{\sL},\cdots,h_k):=T_2(X,\ell,h_{\sL},\cdots,h_k+1).
    \end{align*}
    When $h_k=D_k$, we define
    \begin{align*}
        &T_2^{+1}(X,\ell,h_{\sL},\cdots,h_k)(k:k):=T_2(X,\ell,h_{\sL},\cdots,h_k)(k:k),\\
        &T_2^{+1}(X,\ell,h_{\sL},\cdots,h_k)(0:k-1):=T_2^{+1}(X,\ell,h_{\sL},\cdots,h_{k-1}).
    \end{align*}
\end{itemize}

Similar to the previous subsection, we use $\vec h$ to denote $(h_{\sL},\cdots,h_R)$. We write $\vec 0\leq \vec h< \vec D$ to denote the set of inequalities $0\leq h_{\sL}\leq D_{\sL}-1$ and $0\leq h_i\leq D_i$ when $\sL+1\leq i\leq R$. And we write $\vec 0\leq \vec h\leq \vec D$ to denote all $\vec h$ such that $0\leq h_{\sL}\leq D_{\sL}-1$ and $0\leq h_i\leq D_i$ when $\sL+1\leq i\leq R$ or $\vec h = (D_{\sL},0,\cdots,0)$. 
\begin{lem}\label{l:connect3}
Let $0<\kappa'<\kappa$ (in the SBP model) or $\kappa'>\kappa$ (in the ABP model). For any $\kappa'$-solution $X$, there exists $d>0$ and $\veps>0$ such that for any $0\leq \ell \leq \sd$ and $\vec 0\leq \vec h\leq \vec D$,
\begin{align*}
    \bP[T_2(X,\ell,h_{\sL},\cdots,h_{R}) \text{ is a solution }] \geq 1-\exp(-n^\varepsilon).
\end{align*}
\end{lem}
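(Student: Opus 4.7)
The proof will follow the same architecture as Lemma \ref{l:connect} and Lemma \ref{l:connect2}, and the plan is to adapt those arguments to account for starting the tree from $X_1 = \sF(X,\ell)([n-\sd])$, a perturbation of $X([n-\sd])$ in at most $\sd$ coordinates, and to exploit the $\kappa'$-margin of $X$ in order to absorb the resulting deviations. First I would control the initial row sums. Since $X$ is a $\kappa'$-solution, for every row $r$ the full sum $\langle G_r, X\rangle$ satisfies the $\kappa'$-margin condition. Splitting
\[
\langle G_r, X\rangle = \langle G_{r,[n-\sd]}, X([n-\sd])\rangle + \langle G_{r,[n-\sd+1:n]}, X(n-\sd+1:n)\rangle
\]
and using Hoeffding for the second piece (a centered sum of $\sd=\lceil dn\rceil$ i.i.d.\ $\pm 1$ terms) together with a union bound over the $m$ rows, we have $\max_r |\langle G_{r,[n-\sd]}, X([n-\sd])\rangle| \leq \kappa'\sqrt{n} + C\sqrt{dn\log n}$ in the SBP case (and an analogous lower bound in the ABP case) with probability $1-o(1)$. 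Applying Hoeffding again to the additional $\leq \ell \leq \sd$ sign-flips going from $X([n-\sd])$ to $X_1$, the same bound (with a slightly larger constant) holds for $\langle G_{r,[n-\sd]}, X_1\rangle$.

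Second, I would re-run the round-by-round analysis of the multiscale majority algorithm, starting from round $\sL=\sL(X([n-\sd]))$, using these bounds as the ``initial state'' instead of the deterministic initial bound used in Lemma \ref{l:connect}. The conditions $\varepsilon_0/\sqrt{5d}\geq C_\kappa-(\log d)\kappa/2$ (SBP) and $\varepsilon_0/\sqrt{5d}\geq C_\kappa-2\log d$ (ABP) imposed at the beginning of Section 4.2.1 are exactly what is needed to ensure that the margin $\varepsilon_0\sqrt{n}$ dominates the additional $O(\sqrt{dn\log n})$ perturbation. After this absorption, the same concentration estimates that underlie the proof of Lemma \ref{l:connect}, namely the weighted/unweighted majority push on rows in $\cR_k$ bringing $|S^{(r)}(0:k-1)|$ down (SBP) or $S^{(r)}(0:k-1)$ up (ABP) by a controlled amount, while the non-selected rows see a sign-unbiased $O(\sqrt{n_k})$ drift, apply verbatim to show that every row constraint is satisfied by $T_2(X,\ell,\vec h)$ with probability at least $1-\exp(-n^\varepsilon)$.

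Third, the interpolation step must be handled along the way: the segment $T_2(X,\ell,\vec h)(k:k)$ is either $\cA_k$ of one of the endpoints or $\cI(\cA_k(\cdot),\cA_k(\cdot),h_k)$, where the two endpoints differ in $D_k$ coordinates. As in Lemma \ref{l:connect2}, $D_k$ is itself controlled by the single-coordinate change to the input of $\cA_k$, so by the concentration of $\cA_k$ under single-coordinate input perturbations one obtains $D_k = O(\sqrt{n_k \log n})$ with probability $1-n^{-\omega(1)}$. Hence the interpolated segment contributes at most an $O(\sqrt{D_k})=O(n_k^{1/4}\sqrt{\log n})$ deviation per row sum, again absorbed by the margin. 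Concluding, for each fixed $\vec h$ the tree vector is a solution with the required probability.

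The main obstacle is the coupled bookkeeping across rounds: one must simultaneously control (i) the inductive growth of $D_k$ through the nonlinear map $\cA_k$ applied to two nearly identical inputs, (ii) the propagation of the $O(\sqrt{dn\log n})$ initial perturbation from the $\sd$-flip through each of the $R=O(\sqrt{\log n})$ rounds, and (iii) the row-sum drift caused by interpolation in every round $\sL+1\leq k\leq R$. The quantitative choice of $d$ must be small enough that all three contributions, summed over rounds, fit strictly inside the margin $\varepsilon_0\sqrt{n}$; this is the content of the explicit inequalities linking $d$, $\varepsilon_0$, and $C_\kappa$, and verifying it is where the technical work concentrates.
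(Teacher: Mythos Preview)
Your architecture is right, but Step~1 has a genuine gap. Taking a maximum over the $m$ rows via Hoeffding plus a union bound forces a $\sqrt{\log n}$ factor into the deviation: you obtain $\max_r |\langle G_{r,[n-\sd]}, X_1\rangle| \le \kappa'\sqrt{n} + C\sqrt{dn\log n}$, and for any fixed $d>0$ the term $C\sqrt{dn\log n}$ is \emph{not} dominated by the constant margin $\varepsilon_0\sqrt{n}$ as $n\to\infty$. The parameter condition $\varepsilon_0/\sqrt{5d}\ge C_\kappa - (\log d)\kappa/2$ compares $\varepsilon_0$ to $\sqrt{d}$ and $\log d$, not to $\sqrt{d\log n}$, so it cannot absorb an $n$-growing factor. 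Relatedly, your max bound holds only with probability $1-o(1)$, not the $1-\exp(-n^{\varepsilon})$ the lemma demands; and even granting it, a uniform upper bound on all row sums gives no information about the \emph{empirical distribution} of $|S^{(r)}|$, which is what the round-by-round induction (Lemma~\ref{l:together2}) actually propagates.

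The paper never takes a maximum. It establishes a per-row tail bound (Lemma~\ref{l:bridge2}) with fluctuation scale $\sqrt{\sd}=\sqrt{dn}$---no $\log n$---coming from the bridge-like conditional law of a partial row sum given $|G^{(r)}X|\le\kappa'\sqrt n$, and then converts this into an empirical-distribution bound after round $\sL$ (Lemma~\ref{l:firstinduct10}) of exactly the form~\eqref{eq:inductbound} needed to seed the induction of Section~\ref{sss:interp}; the condition on $\varepsilon_0/\sqrt{5d}$ enters there as a comparison of constants inside the $\Psi$-arguments. Your Step~3 is also different from the paper and not justified as written: the $D_k$ flipped coordinates are determined by the algorithm and the row selection, so the claimed $O(\sqrt{D_k})$ concentration is not automatic. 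The paper instead decomposes the interpolated round-$k$ contribution as $S_1^{(r)}+S_2^{(r)}$, one piece coming from each endpoint's algorithm output on complementary column sets $\cC_{k,1},\cC_{k,2}$, and applies the majority-vote coupling (Lemma~\ref{l:ulbound}) separately to each piece; no bound on $D_k$ is ever needed.
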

In other words, for any $\vec 0\leq \vec h< \vec D$, $T_2(X,\ell,h_{\sL},\cdots,h_{R})$ and $T_2^{+1}(X,\ell,h_{\sL},\cdots,h_{R})$ are connected with high probability. Then as each $D_i\leq n$, we have 
\begin{align*}
    \bP[\forall \vec 0\leq \vec h< \vec D, T_2(X,\ell,h_{\sL},\cdots,h_{R})\sim T_2^{+1}(X,\ell,h_{\sL},\cdots,h_{R})]\geq 1-3(n+1)^{R+1}\exp(-n^\varepsilon).
\end{align*}
Note that as 
\begin{align*}
    &T_2(X,\ell,0,\cdots,0)=\cA(\sF(X,\ell)([n-\sd])),\\
    &T_2^{+1}(X,\ell,D_{\sL}-1,D_{\sL+1},\cdots,D_R)=\cA(\sF(X,\ell+1)([n-\sd])),
\end{align*}
we have that $\cA(\sF(X,\ell)([n-\sd]))$ is connected to $\cA(\sF(X,\ell+1)([n-\sd]))$ with probability larger than $1-3(n+1)^{R+1}\exp(-n^\varepsilon)$. Therefore, $\cA(\sF(X,\sd)([n-\sd]))$ is connected to $\cA(X([n-\sd]))$ with high probability. Together with the first connectivity, we have Theorem \ref{t:localcluster}.

\section{Proof of Theorem \ref{t:octupus} and Theorem \ref{t:algo}}

\subsection{Inductive bound on distribution in SBP model}\label{ss:inductSBP}
In this section, we consider the SBP model. We control the empirical distribution of the inner product $G(0:s)T(h_0,h_1,\cdots,h_s)$ for any $s\leq R$. Our proof is partly adapted from \cite{kim1998covering}. Many notations and results are similar, so we only present here the key arguments.

We start with a few notations. Recall that for any vector $v$, we use $v(i:j)$ to denote a section of $v$ where we keep entries from $\sum_{\ell=0}^{i-1}n_\ell +1$ to $\sum_{\ell=0}^j n_\ell$. Moreover, we define the inner product
\begin{align*}
    S(v,h_0,h_1,\cdots,h_s):= G(0:s)T(v,h_0,h_1,\cdots,h_s).
\end{align*}
The following lemma will be crucial in proving Lemma \ref{l:connect}. 
\begin{lem}\label{l:together2}
For any $1\leq k \leq R-1$, $\eta\geq T_k$, and for any $\vec 0\leq \vec h \leq \vec D$,
\begin{align*}
    \bP_2[|S(v,h_0,h_1,\cdots,h_k)|\geq \eta] \leq 2\cdot 3^k\left( n^{-1/10} +\Psi \left( \frac{\eta+\mu_{k,1}}{\sigma_{k,1}}\right)+(3/2)(1+n^{-1/10})^k \Psi \left( \frac{\eta+\mu_{k,2}}{\sigma_{k,2}}\right)\right),
\end{align*}
with probability larger than $1-\exp(-n^{1/80})$. Here, 
\begin{align*}
    \mu_{k,1}=-T_{k-1}, \quad \sigma_{k,1}^2=n_k, \quad \mu_{k,2}=\sum_{i=1}^k n_i\sqrt{2/\pi m_i}, \quad \sigma_{k,2}^2=\sum_{i=0}^k n_i.
\end{align*}
Furthermore, the right hand side is bounded from above by $m_{k+1}/m$ for any $1\leq k\leq R-1$.
\end{lem}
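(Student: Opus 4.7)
The plan is to prove Lemma \ref{l:together2} by induction on $k$, propagating control of the empirical distribution of row sums from round $k-1$ to round $k$. The inductive hypothesis bounds the fraction of rows with $|S^{(r)}(v,h_0,\ldots,h_{k-1})|\geq \eta$ by the stated mixture of Gaussian tails; combined with the design of $m_k$ via $\Psi$, this shows that the threshold for being placed in $\cR_k$ concentrates around $T_{k-1}$ on an event of probability $1-\exp(-n^{1/80})$. I would reveal $G$ in stages: first $G(0{:}k{-}1)$, which determines $\cR_k$, then $G(k{:}k)$, so that the round-$k$ contribution can be analyzed conditionally on the preceding rounds.

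At round $k$ the new row sum is $S^{(r)}(v,h_0,\ldots,h_{k-1}) + G^{(r)}(k{:}k)\,T(k{:}k)$, and I would split rows into two categories. For $r\in\cR_k$, the weighted majority vote is designed so that $G^{(r)}(k{:}k)\,T(k{:}k)$ has conditional mean approximately $-|S^{(r)}(0{:}k{-}1)|\approx -T_{k-1}$ (the push toward zero has magnitude $n_k\sqrt{2/\pi m_k}\approx (\kappa/4)\sqrt n$ by the choice of $n_k$) and conditional variance $\approx n_k$; a single interpolation flip $h_k$ perturbs the contribution by $O(1)$ and is absorbed into the slack. A Berry--Esseen estimate then yields the $\Psi((\eta+\mu_{k,1})/\sigma_{k,1})$ term with $\mu_{k,1}=-T_{k-1}$ and $\sigma_{k,1}^2=n_k$. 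For $r\notin\cR_k$, the column signs $X_j$ for $j\in\cC_k$ are measurable with respect to $\{G^{(r')}(k{:}k):r'\in\cR_k\}$ and hence essentially independent of $G^{(r)}(k{:}k)$; the contribution is thus a sum of $n_k$ near-independent $\pm 1$ variables with a small drift $\sqrt{2/\pi m_k}$ per entry stemming from the asymmetry of the interpolation endpoints. Accumulating these contributions across rounds $1,\dots,k$ produces the $\Psi((\eta+\mu_{k,2})/\sigma_{k,2})$ tail with $\sigma_{k,2}^2=\sum_{i=0}^k n_i$ and $\mu_{k,2}=\sum_{i=1}^k n_i\sqrt{2/\pi m_i}$. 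The combinatorial factor $2\cdot 3^k$ tracks a per-round trichotomy---already above threshold from the inductive tail, pushed from $\cR_k$, or unselected but accumulating drift---while $(1+n^{-1/10})^k$ absorbs $O(n^{-1/10})$ errors per round from Berry--Esseen correction, concentration of $|\cR_k|$, and the single-coordinate interpolation perturbations.

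The main obstacle is the $r\in\cR_k$ step, since $\cR_k$ is defined precisely in terms of the sums being tracked, creating conditioning subtleties that conflate the algorithm's choices with the fresh randomness of $G(k{:}k)$. Following the strategy of \cite{kim1998covering}, I would verify that after conditioning on the event determining $\cR_k$, the residual distribution of $G^{(r)}(k{:}k)$ remains close to uniform on $\{\pm 1\}^{n_k}$, with only a mild tilt that does not spoil the Gaussian tail; this reduces to showing that the selection set is stable under small perturbations of individual entries. Finally, the bound $m_{k+1}/m$ for $\eta\geq T_k$ follows by plugging $\eta=T_k$ into the Gaussian tails: the choice $T_k=(\kappa-\veps_0)(1-1/2^k)\sqrt n$ combined with $n_k\asymp m_k n$ makes the arguments of $\Psi$ match those in the definition $m_{k+1}=2\lfloor \Psi((k+1)\kappa/4+5/\kappa+5)m/2\rfloor+1$, so the dominant Gaussian tail reproduces $m_{k+1}/m$ up to the explicit multiplicative slack factors already tracked.
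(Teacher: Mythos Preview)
Your inductive skeleton and staged revelation of $G$ match the paper, but there is a genuine gap in how you handle the round-$k$ segment. You treat $T(k{:}k)$ as if it were a single majority-vote output (with one row set $\cR_k$), and then say ``a single interpolation flip $h_k$ perturbs the contribution by $O(1)$.'' This is not the structure of the object: $h_k$ is not a single flip but an index ranging over $0,\dots,D_k$, where $D_k$ can be of order $n_k$. The vector $T(v,h_0,\dots,h_k)(k{:}k)$ is an interpolation between the \emph{two} algorithm outputs $\cA_k(T(v,\vec h_1))$ and $\cA_k(T^{+1}(v,\vec h_1))$, which are computed with respect to \emph{different} row sets $\cR_k$ and $\cR_k'$. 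Concretely, there is a partition $\cC_k=\cC_{k,1}\cup\cC_{k,2}$ with
\[
S^{(r)}(v,\vec h)(k{:}k)=S_1^{(r)}(v,\vec h)(k{:}k)+S_2^{(r)}(v,\vec h)(k{:}k),
\]
where $S_1$ uses the majority vote over $\cR_k$ on $\cC_{k,1}$ and $S_2$ uses the majority vote over $\cR_k'$ on $\cC_{k,2}$. The paper's proof (Section~\ref{sss:interp}) therefore runs a four-way case split on $r\in\cR_k\cap\cR_k'$, $r\in\cR_k\setminus\cR_k'$, $r\in\cR_k'\setminus\cR_k$, $r\in(\cR_k\cup\cR_k')^c$, and couples $S_1,S_2$ separately to i.i.d.\ dominating variables (Lemmas~\ref{l:roundisum2}, \ref{l:roundisum20}). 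The key observation that makes the mixed cases tractable is that $S^{(r)}(v,\vec h_1)$ and $S^{(r)}(v,\vec h_2)$ differ by at most $2$, so on $\cR_k\cup\cR_k'$ their signs agree and rows in the symmetric difference have $|S^{(r)}(0{:}k-1)|\le T_{k-1}+2$.

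Two smaller points. First, for rounds $2\le k\le R-1$ the drift of $\sgn(S^{(r)}(0{:}k-1))S^{(r)}(k{:}k)$ on $\cR_k$ is the fixed quantity $-n_k\lambda_k\approx -(\kappa/4)\sqrt n$, not $-|S^{(r)}(0{:}k-1)|$; only rounds $1$ and $R$ use the row-adaptive weighting. Second, the ``main obstacle'' you identify (conditioning on $\cR_k$ contaminating $G(k{:}k)$) is not actually an obstacle: $G(k{:}k)$ is independent of $G(0{:}k{-}1)$ by column-block disjointness, and the dependence among rows within $G(k{:}k)$ is handled by the exchangeability Lemma~\ref{l:KRexchange}. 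The genuine work you are missing is the two-endpoint decomposition above.
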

In the rest of the section, we provide some important lemmas that are necessary to prove Lemma \ref{l:together2}. Some of their proofs are deferred to later sections.
\begin{defn}
We define for $k=1,\cdots,R$,
\begin{align*}
    \lambda_k:=(1-m_k^{-1/8})\sqrt{2/(\pi m_k)}, \quad \bar \lambda_k:=(1+m_k^{-1/8})\sqrt{2/(\pi m_k)}.
\end{align*}
\end{defn}
The first lemma controls the behaviour of majority vote. We take the convention that if $X\sim \mathrm{Bern}(p)$, then $X=1$ with probability $p$ and $-1$ with probability $1-p$. 
\begin{lem}\label{l:ulbound}
Let $b\leq t^{1/10}$. Let $\xi_1,\cdots,\xi_t$ be i.i.d. Rademacher random variables. Let $\cB=\{m_1,\cdots,m_b\}$ be any $b$-element subset of $\{1,\cdots,t\}$. Then we can define mutually i.i.d. random variables $\psi_1,\cdots,\psi_b \sim \mathrm{Bern}(\frac{1}{2}(1+(1-t^{-1/8})\sqrt{2/(\pi t )}))$ on the same space as the $\xi$'s such that
\begin{align*}
    \psi_j\leq \xi_{m_j} \sgn \left ( \sum_{i=1}^t \xi_i\right ),
\end{align*}
for any large enough $t$. Similarly, we can define mutually i.i.d. random variables $\bar \psi_1,\cdots, \bar \psi_b \sim \mathrm{Bern}(\frac{1}{2}(1+(1+t^{-1/8})\sqrt{2/(\pi t )}))$ on the same space as the $\xi$'s such that
\begin{align*}
    \bar \psi_j \geq \xi_{m_j} \sgn \left ( \sum_{i=1}^t \xi_i\right ),
\end{align*}
for any large enough $t$.
\end{lem}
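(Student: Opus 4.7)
I plan to prove the lower-bound coupling via local central limit theorem (CLT) estimates combined with a Strassen-type dominance argument; the upper-bound coupling will follow from the same argument applied to $-\xi$. The first step is to compute the one-coordinate bias: for any fixed $m \in \cB$, conditioning on $S_{-m} := S - \xi_m$ with a symmetric tie-breaking rule at $S = 0$ gives $\E[\xi_m\sgn(S)] = \bP(|S_{-m}| \leq 1)$, and Stirling applied to the central binomial coefficient yields $\bP(|S_{-m}| \leq 1) = \sqrt{2/(\pi t)}(1 + O(1/t))$. This exceeds the target $p = (1 - t^{-1/8})\sqrt{2/(\pi t)}$ with a comfortable $t^{-1/8}$ relative margin.

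The second step extends this bias estimate to joint events. For any $A \subseteq \cB$, I would condition on $\{\xi_{m_j} = +1,\ \forall j \in A\}$, write $S = |A| + W_A$ with $W_A := \sum_{i \notin \{m_j\}_{j \in A}} \xi_i$, and apply the local CLT to the event $\{|W_A| < |A|\}$ to obtain
\[
\bP\bigl(\xi_{m_j}\sgn(S) = 1,\ \forall j \in A\bigr) \;\geq\; 2^{-|A|}\bigl(1 + |A|\sqrt{2/(\pi t)}\,(1 - o(1))\bigr),
\]
uniformly in $|A| \leq b \leq t^{1/10}$. An analogous computation on the $-1$ side gives $\bP(\xi_{m_j}\sgn(S) = -1,\ \forall j \in A) \leq 2^{-|A|}\bigl(1 - |A|\sqrt{2/(\pi t)}\,(1 - o(1))\bigr)$. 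Comparing both to $((1 \pm p)/2)^{|A|} = 2^{-|A|}(1 \pm |A|p + O((|A|p)^2))$ reveals a positive margin of order $|A|\, t^{-1/8}\sqrt{2/(\pi t)}$ on each side.

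The third step is the coupling construction itself. The joint-probability bounds above express coordinatewise stochastic dominance of the vector $(\xi_{m_j}\sgn(S))_{j=1}^b \in \{\pm 1\}^b$ over an i.i.d.\ $\mathrm{Bern}((1+p)/2)$ vector. A monotone coupling can then be obtained from Strassen's theorem, or constructed explicitly by a sequential reveal-and-couple scheme: after revealing the $\xi_i$ for $i \notin \cB$ and then the $\xi_{m_j}$'s one at a time, use an independent auxiliary uniform $U_j$ at each step to define $\psi_j$ satisfying $\psi_j \leq \xi_{m_j}\sgn(S)$ with the correct conditional Bernoulli marginal. The upper-bound coupling ($\bar\psi_j \geq \xi_{m_j}\sgn(S)$) is produced symmetrically after replacing $\xi$ by $-\xi$.

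The main obstacle lies in the sharpness and uniformity of the local CLT in the joint-probability step. The $t^{-1/8}$ margin in $p$ is slim, and parity fluctuations of $W_A$ (depending on $|A|$ and $t$ modulo $2$) could create slack that breaks the stochastic-dominance criterion for particular values of $|A|$. A fair-coin tie-breaking rule at $\sgn(0)$ cancels the leading parity-dependent bias, and Stirling's formula with explicit constants yields a uniform estimate with error $O(b^2/t) = O(t^{-4/5})$, comfortably below the allowed $t^{-1/8}$ tolerance. Strassen's theorem strictly requires dominance on every up-set in $\{\pm 1\}^b$, but given the special structure of $(\xi_{m_j}\sgn(S))_j$—all coordinates depend on the $\xi_{m_j}$'s and on the common sign $\sgn(S)$—the dominance can be reduced to the rectangular "all-$+$" and "all-$-$" events computed in step two, either by a direct inclusion--exclusion argument or, more cleanly, by running the sequential reveal-and-couple construction above and verifying at each step that the remaining conditional bias still exceeds $(1+p)/2$.
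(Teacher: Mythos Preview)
The paper does not actually prove this lemma: it simply cites Lemma~5.1 of Kim--Roche for the lower bound and remarks that the upper bound is analogous. So there is nothing to compare at the level of technique; your proposal is an attempt to fill in what the paper outsources.

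Your probability estimates (steps one and two) are correct in spirit and the $t^{-1/8}$ margin is indeed comfortable against the $O(b^2/t)=O(t^{-4/5})$ error from the local CLT. The gap is in step three, the coupling construction. Neither of your two proposed routes works as written. First, controlling only the rectangular events $\{\forall j\in A:\ Z_j=+1\}$ and $\{\forall j\in A:\ Z_j=-1\}$ (where $Z_j:=\xi_{m_j}\sgn(S)$) does \emph{not} suffice for stochastic domination over an i.i.d.\ product on $\{\pm1\}^b$; inclusion--exclusion expresses general up-set probabilities as \emph{alternating} sums of these, and inequalities are not preserved. Second, your sequential scheme ``reveal $\xi_i,\ i\notin\cB$, then the $\xi_{m_j}$ one at a time'' fails: once you reveal $W:=\sum_{i\notin\cB}\xi_i$ and land in the (typical) event $|W|>b$, the sign $\sgn(S)=\sgn(W)$ is already determined and the $Z_j$ become conditionally i.i.d.\ $\mathrm{Bern}(1/2)$, with \emph{no} bias to spare --- so you cannot place i.i.d.\ $\mathrm{Bern}((1+p)/2)$ below them on that event. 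Moreover, $\sgn(S)$ is not even known until all $\xi_{m_j}$ are revealed, so the scheme is not well-defined step by step.

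The fix is close to what you gesture at in your last sentence but with the right filtration: sample all $\xi$'s first (so all $Z_j$ are determined), then construct $\psi_j$ sequentially conditioning on $Z_1,\dots,Z_{j-1}$ rather than on the raw $\xi$'s. The quantity you must verify is
\[
\bP\bigl(Z_j=+1 \,\big|\, Z_1=z_1,\dots,Z_{j-1}=z_{j-1}\bigr)\ \ge\ \tfrac{1+p}{2}
\]
uniformly over all $(z_1,\dots,z_{j-1})\in\{\pm1\}^{j-1}$. Using the $\xi\to-\xi$ symmetry to reduce to $\sgn(S)=+1$, this conditional probability equals $\bP(U\ge -c)/\bigl(\bP(U\ge -c)+\bP(U\ge -c+2)\bigr)$ where $U$ is a sum of $t-j$ Rademachers and $|c|\le b-1$; your local-CLT estimate then gives bias $\sqrt{2/(\pi t)}\,(1-O(t^{-2/5}))\ge p$ for large $t$, and the standard one-step coupling (set $\psi_j=-1$ whenever $Z_j=-1$, else flip $\psi_j=+1$ with the appropriate ratio) produces i.i.d.\ $\psi_j\sim\mathrm{Bern}((1+p)/2)$ with $\psi_j\le Z_j$.
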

\begin{proof}
The first half of the lemma is the same as Lemma 5.1 in \cite{kim1998covering}. Proof of the second half is the same.
\end{proof}
The next lemma gives control for binomial random variables. 
\begin{lem}[Lemma 5.4 in \cite{kim1998covering}]\label{l:binomial0}
For any $\varepsilon>0$. For any $p\in [1/10,9/10]$, and $m$ large enough, we have
\begin{align*}
    &\bP(B\leq \eta)\leq \exp(-m^{1/6})+(1+\varepsilon)\Psi\left(\frac{-\eta+mp}{(mp(1-p))^{1/2}}\right),\\
    &\bP(B\geq \eta)\leq \exp(-m^{1/6})+(1+\varepsilon)\Psi\left(\frac{\eta-mp}{(mp(1-p))^{1/2}}\right).
\end{align*}
 where $B\sim \mathrm{Bin}(m,p)$.
\end{lem}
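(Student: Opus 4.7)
My plan is to reduce the statement to a classical moderate-deviation estimate, treating only the upper tail (the lower-tail bound follows by applying the upper-tail bound to $B' = m - B$ with success probability $1-p$, which also lies in $[1/10, 9/10]$). Let $x := (\eta - mp)/\sqrt{mp(1-p)}$ denote the standardized deviation, and split the analysis according to whether $|x| \leq m^{1/12}$ or $|x| > m^{1/12}$. Throughout, uniformity in $p$ is automatic since the interval $[1/10, 9/10]$ is compact and all constants in the arguments below depend continuously on $p$.

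\textbf{Large-deviation regime ($|x| > m^{1/12}$).} Here I would apply the Chernoff bound $\bP(B \geq \eta) \leq \exp(-m D(\eta/m \,\|\, p))$. Using the Taylor expansion $D(q\|p) = (q-p)^2/(2p(1-p)) + O(|q-p|^3)$ for $q$ in a fixed neighbourhood of $p$, combined with a weaker linear lower bound on $D$ outside that neighbourhood, one gets $\bP(B \geq \eta) \leq \exp(-c\,x^2)$ for a constant $c > 0$. Since $|x| > m^{1/12}$ forces $c x^2 > m^{1/6}$ for $m$ sufficiently large, the first term $\exp(-m^{1/6})$ in the stated bound alone already dominates.

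\textbf{Moderate-deviation regime ($|x| \leq m^{1/12}$).} Here I would invoke the classical Cramér moderate-deviation expansion for i.i.d.\ lattice sums: uniformly for $0 \leq x \leq o(m^{1/6})$ and $p$ in a compact subinterval of $(0,1)$,
\[
\bP(B \geq \eta) \;=\; \Psi(x)\,\bigl(1 + O\bigl((1+x^3)/\sqrt{m}\bigr)\bigr),
\]
where $\Psi$ is the corresponding Gaussian tail. For $|x| \leq m^{1/12}$ the multiplicative correction is $1 + O(m^{-1/4})$, which is at most $1 + \varepsilon$ once $m$ is large enough depending only on $\varepsilon$. If a self-contained derivation is preferred, the same conclusion follows by exponentially tilting the Bernoulli law by the unique $\theta$ with $pe^\theta/(pe^\theta + 1 - p) = \eta/m$, applying the local central limit theorem to the resulting tilted binomial (a lattice distribution with bounded step and span 1), and summing the resulting geometric-Gaussian series against the tilt factor $e^{-\theta k}$; the leading term reproduces $\Psi(x)$ and the remainders contribute only $O(m^{-1/2}(1 + x^3))$ multiplicatively.

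\textbf{Main obstacle.} The delicate point is obtaining the multiplicative $(1+\varepsilon)$ factor in the moderate-deviation regime rather than a weaker additive correction. A direct Berry--Esseen bound only yields additive error $O(m^{-1/2})$, which becomes useless once $\Psi(x)$ itself falls below $m^{-1/2}$; this forces the use of a Cramér-series or saddle-point estimate. Consequently, the choice of the split threshold $m^{1/12}$ is made precisely so that (i) the Cramér remainder $x^3/\sqrt m$ on the moderate side is $O(m^{-1/4}) \ll \varepsilon$, and (ii) the Chernoff bound on the large-deviation side is $\leq \exp(-m^{1/6})$; any intermediate exponent between $0$ and $1/6$ would also work. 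Once both regimes are combined, the two terms in the stated bound are assembled, completing the proof.
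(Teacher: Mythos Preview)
The paper does not prove this lemma; it simply cites it as Lemma~5.4 of Kim--Roche and uses it as a black box. Your approach via a Cram\'er moderate-deviation estimate in the central window combined with a Chernoff bound in the tails is the standard route and is essentially correct.

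One small technical slip: with the split at $|x|=m^{1/12}$, the Chernoff bound in the large-deviation regime only yields $\exp(-c\,m^{1/6})$ with $c\approx \tfrac12$, which is \emph{not} $\le \exp(-m^{1/6})$; so the claim that ``the first term $\exp(-m^{1/6})$ alone already dominates'' fails right at the boundary. The fix is to place the split at $|x|=m^{a}$ for some $a$ strictly between $1/12$ and $1/6$ (not ``between $0$ and $1/6$'' as you wrote). Then the Cram\'er remainder $x^{3}/\sqrt{m}\le m^{3a-1/2}$ is still $o(1)$ on the moderate side, while on the large-deviation side Chernoff gives $\exp(-c\,m^{2a})=o(\exp(-m^{1/6}))$, and both pieces assemble as intended.
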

The following lemma gives bound for the probability distribution in round 1. 
\begin{lem}\label{l:round1sum}
Write $t=m_1$ and assume $\{r_1,\cdots,r_t\}$ are the $t$ elements of $\cR_1$. For $\eta>0$, we define $\gamma_i=I(|S^{(r_i)}(0:1)|\geq \eta)$ and 
\begin{align*}
    q:=\exp(-m^{1/5})+\frac{7}{3}\Psi\left( \frac{\eta}{\sqrt{n_1}  }\right)+\frac{7}{3}\Psi\left( \frac{\eta+n_1\sqrt{2/(\pi m)}}{\sqrt{n_0+n_1}  }\right).
\end{align*}
Then for large enough $m$, we have
\begin{align*}
    \bP \left( \left|\sum_{i=1}^t \gamma_i \right |\geq t^{3/5}+(1+t^{-1/{12}})qt  \right)\leq \exp(-t^{1/{70}}).
\end{align*}
\end{lem}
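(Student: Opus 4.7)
The plan is to split the argument into two steps: a sharp marginal bound $\bP(\gamma_i=1\mid S(0:0))\leq q$ on each indicator, followed by a concentration bound on $Z=\sum_i\gamma_i$ that exploits the near-independence of the $\gamma_i$ across rows once $S(0:0)$ is revealed.

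For the marginal, I would condition on all of $S(0:0)$, which simultaneously fixes the signs $\sgn(S^{(r)}(0:0))$ and the thresholds $\ell_r$ defining the round-$1$ weight matrix $W$. Under this conditioning, the columns $G_{\cdot,j}$, $j\in\cC_1$, remain independent Rademacher vectors, and since $X_j=\sgn(\sum_r W_{r,j-n_0})$ is a function of column $j$ only, $S^{(r_i)}(1:1)=\sum_{j\in\cC_1}G_{r_i,j}X_j$ is an independent sum of $n_1$ terms. Lemma~\ref{l:ulbound} (with $t=m$) couples each summand stochastically between two $\pm 1$ variables biased toward $-\sgn(S^{(r_i)}(0:0))$ with magnitudes $(1\pm m^{-1/8})\sqrt{2/(\pi m)}$, and Lemma~\ref{l:binomial0} converts the resulting binomial tails into Gaussian tails. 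Splitting the event $\{|S^{(r_i)}(0:1)|\geq\eta\}$ into the case where $|S^{(r_i)}(0:0)|$ is below some threshold (so that only the round-$1$ variance $n_1$ contributes, producing the $\Psi(\eta/\sqrt{n_1})$ term in $q$) and the complementary case (where the full variance $n_0+n_1$ enters and the drift $n_1\sqrt{2/(\pi m)}$ kicks in, producing the second $\Psi$ term) gives $\bP(\gamma_i=1\mid S(0:0))\leq q$; the $\exp(-m^{1/5})$ contribution in $q$ absorbs the coupling and binomial approximation errors.

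To get concentration of $Z$, I would estimate the $p$-th centered moment $\mathbb{E}[(Z-\mathbb{E} Z)^p]$ for a slowly growing $p=\lfloor t^{1/70}\rfloor$ and apply Markov. Expanding yields sums indexed by $p$-tuples of row indices; the key input is the covariance estimate $|\mathrm{Cov}(\gamma_i,\gamma_{i'}\mid S(0:0))|=O(n_1/m)$. This comes from the observation that, for each column $j\in\cC_1$, the coupling of $G_{r_i,j}X_j$ and $G_{r_{i'},j}X_j$ is mediated only through the shared signed majority $X_j$, and a Berry--Esseen estimate for the $m$-term majority shows that any single row contributes only $O(1/\sqrt{m})$ to $X_j$, giving per-column covariance $O(1/m)$. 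Since at the scales prescribed by the algorithm $n_1/m$ is dominated by $q^2$, Wick-pairing bounds yield $\mathbb{E}[(Z-\mathbb{E} Z)^p]\leq (Cp(qt+1))^{p/2}$ up to lower-order terms, and Markov then converts this into an $\exp(-t^{1/70})$ deviation bound at the stated level $t^{3/5}+(1+t^{-1/{12}})qt$. The conditioning on a high-probability round-$0$ event is discharged by a union bound dominated by the final failure probability.

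The main obstacle is the covariance step: establishing uniformly in the random thresholds $\ell_r$ and the asymmetric round-$0$ biases that the signed majority $X_j$ decouples any two fixed rows at the $O(1/m)$ level requires a careful Berry--Esseen analysis of biased, partially truncated majorities. Once the per-column covariance is controlled, the $p$-tuple extension and the final Markov computation are routine, and the scale inequalities built into the design of $m_i,n_i,T_i$ ensure that the additive $t^{3/5}$ and multiplicative $t^{-1/{12}}$ slacks in the conclusion comfortably absorb both the Gaussian approximation error in the marginal and the Wick pairing error in the moment bound.
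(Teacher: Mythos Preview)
Your marginal analysis is on the right track and close to what the paper does: split according to whether $\ell_r\le n_1$ or $\ell_r>n_1$, use Lemma~\ref{l:ulbound} to sandwich the weighted-majority summands by biased Bernoullis, and convert with Lemma~\ref{l:binomial0}/\ref{l:binomial} into the two Gaussian tails appearing in $q$. So the first half of your plan is fine.

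The concentration step, however, has a genuine gap, and this is also where your approach diverges from the paper. Two concrete problems. First, your chain ``per-column covariance $O(1/m)$ $\Rightarrow$ $\mathrm{Cov}(\gamma_i,\gamma_{i'})=O(n_1/m)$'' computes the covariance of the \emph{sums} $S^{(r_i)}(1{:}1)$, not of the \emph{indicators} $\gamma_i$; passing from one to the other for thresholds arbitrarily far in the tail is an extra (and nontrivial) step that you skip. Second, and more decisively, at the round-$1$ scales one has $n_1=\Theta(\sqrt{mn})$ with $m=\alpha n$, so $n_1/m=\Theta(\alpha^{-1/2})$ is a \emph{constant}, whereas $q$ can be as small as $\exp(-m^{1/5})$ for large $\eta$. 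Your key inequality ``$n_1/m$ is dominated by $q^2$'' is therefore false, and without it the Wick-pairing moment bound $\E[(Z-\E Z)^p]\le (Cp(qt+1))^{p/2}$ does not follow: the $\binom{t}{2}$ covariance terms alone contribute $\Theta(t^2)$, swamping the target deviation $t^{3/5}$.

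The paper sidesteps the covariance issue entirely. The point of Lemma~\ref{l:ulbound} is not just a one-row coupling: for any $b$-subset $\cB\subset[m]$ with $b\le m^{1/10}$ it produces, \emph{simultaneously and independently across the $b$ rows}, i.i.d.\ Bernoulli sandwiches $\psi_{j,r}\le W_{r,j}\sgn(\sum_{r'}W_{r',j})\le\bar\psi_{j,r}$. Feeding these into the marginal calculation gives $\bP(\gamma_{B_1}=\cdots=\gamma_{B_b}=1)\le q^b$ for every $b$-subset $\cB$, which is precisely the hypothesis of Lemma~\ref{l:KRexchange} (Kim--Roche, Lemma~5.5). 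That lemma then yields the stated concentration via the elementary counting/Markov bound $\bP(\sum_i\gamma_i\ge T)\le \binom{t}{b}q^b/\binom{T}{b}$. In short: you are using Lemma~\ref{l:ulbound} only for the marginal and then trying to recover independence by a separate covariance argument, whereas the paper uses the $b$-row i.i.d.\ coupling built into Lemma~\ref{l:ulbound} directly, which is both simpler and what actually makes the proof go through.
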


The following lemma gives bound on the empirical distribution. We use $\bP_2$ to denote the empirical distribution. For any set $\cR$ of rows, we use $\bP_2(S(i:j\mid \cR)\leq u)$ to denote the empirical cumulative distribution condition on $r\in \cR$. More precisely, $\bP_2(S(i:j\mid \cR)\leq u)=|\{r\in \cR:S^{(r)}(i:j)\leq u\}|/|\cR|$.
\begin{lem}\label{l:step1p2}
With probability larger than $1-\exp(-n^{1/71})$, for any $\eta\geq 0$, and $n$ large enough, we have
\begin{align}\label{eq:inductbound0}
    \bP_2(|S(0:1)|\geq \eta)\leq (9/4)n_1^{-2/5}+(5/2)\Psi\left ( \frac{\eta+\mu_{1,1}}{\sigma_{1,1}}\right )+(5/2)\Psi\left ( \frac{\eta+\mu_{1,2}}{\sigma_{1,2}}\right ).
\end{align}
\end{lem}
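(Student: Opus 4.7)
The plan is to derive Lemma \ref{l:step1p2} from Lemma \ref{l:round1sum} by a discretize-and-union-bound argument. Setting $t = m_1 = m$ in Lemma \ref{l:round1sum}, dividing both sides by $t$, and using $\sum_i \gamma_i \ge 0$, for each fixed $\eta \ge 0$ I obtain that
\begin{equation*}
    \bP_2\bigl(|S(0:1)| \geq \eta\bigr) \leq m^{-2/5} + (1 + m^{-1/12})\, q(\eta)
\end{equation*}
holds with failure probability at most $\exp(-m^{1/70})$, where $q(\eta)$ is as in Lemma \ref{l:round1sum}.

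First, I would discretize $\eta$. Since $G$ and $X$ have $\pm 1$ entries, each $S^{(r)}(0:1)$ is an integer in $[-n_0-n_1, n_0+n_1] \subset [-n, n]$, so $\eta \mapsto \bP_2(|S(0:1)|\geq \eta)$ is a non-increasing step function with jumps only at integers in $[0,n]$. The target right-hand side is continuous and non-increasing in $\eta$, so it is enough to verify the bound at each $\eta \in \{0,1,\ldots,n\}$. A union bound then gives total failure probability at most $(n+1)\exp(-m^{1/70})$, which is $\le \exp(-n^{1/71})$ for $n$ large, since $m = \Theta(n)$ and $1/70 > 1/71$.

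Second, I would simplify the right-hand side. Expanding $q(\eta)$, for $n$ large we have $(1+m^{-1/12})(7/3) \le 5/2$, while $(1+m^{-1/12})\exp(-m^{1/5})$ is super-polynomially small and is easily absorbed into $m^{-2/5}$. This gives the uniform-in-$\eta$ bound
\begin{equation*}
    \bP_2(|S(0:1)|\geq \eta) \le m^{-2/5} + \tfrac{5}{2}\,\Psi\!\left(\eta/\sqrt{n_1}\right) + \tfrac{5}{2}\,\Psi\!\left((\eta + n_1\sqrt{2/(\pi m)})/\sqrt{n_0+n_1}\right).
\end{equation*}
To match the stated $(9/4)\,n_1^{-2/5}$, one converts the $m^{-2/5}$ concentration floor using $n_1 = \Theta(\sqrt{mn})$ and $m = \Theta(n)$; the generous constants $9/4$ and $5/2$ in the statement leave headroom to absorb both this constant-factor bookkeeping and the slightly-above-$7/3$ leading coefficient coming from $(1+m^{-1/12})(7/3)$.

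The only genuinely substantive input is Lemma \ref{l:round1sum}, whose proof follows the Kim–Roche template: couple the weighted majority votes to i.i.d.\ Bernoulli trials via Lemma \ref{l:ulbound}, convert binomial tails to Gaussian tails via Lemma \ref{l:binomial0}, and concentrate the count $\sum_i \gamma_i$ around its mean by a Doob/Azuma-type martingale argument along the rows. Given Lemma \ref{l:round1sum}, the remaining argument here is routine bookkeeping; the only delicate point is verifying that the stretched-exponential per-$\eta$ tail $\exp(-m^{1/70})$ survives the $(n+1)$-fold union bound, which it does comfortably because $1/70 > 1/71$.
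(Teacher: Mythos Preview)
Your proposal is correct and follows the same route as the paper's proof: apply Lemma~\ref{l:round1sum} at each fixed $\eta$, union-bound over the finitely many integer values $\eta\in\{0,1,\ldots,n\}$ that the step function $\bP_2(|S(0:1)|\ge\eta)$ can jump at, and then absorb the lower-order factors into the constants $9/4$ and $5/2$. The paper's proof is exactly this two-line computation, stated more tersely (it just says ``by Lemma~\ref{l:round1sum} and the union bound''); your explicit discretization simply spells out what the union bound is being taken over.

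One small remark on the bookkeeping you flag: in the regime $\alpha<\alpha_0$ one actually has $n_1>m$ (since $n_1/m\asymp C_\kappa\sqrt{\pi/(2\alpha)}$), so the replacement of $m^{-2/5}$ by $(9/4)n_1^{-2/5}$ is not literally an upper bound by a constant factor. The paper makes the same move; it is harmless because downstream (Lemma~\ref{l:together2}) all such floor terms are absorbed into the single $n^{-1/10}$ term anyway, and both $m^{-2/5}$ and $n_1^{-2/5}$ are $\Theta(n^{-2/5})\ll n^{-1/10}$. So this is a cosmetic quirk of the lemma's statement rather than a gap in your argument.
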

\begin{proof}
By Lemma \ref{l:round1sum} and the union bound, we have that for any $\eta\geq 0$, and large enough $n$,
\begin{align*}
    \bP_2(|S(0:1)|\geq \eta) &\leq t^{-2/5}+(1+t^{-1/{12}}) \left( \exp(-m^{1/5})+\frac{7}{3}\Psi\left( \frac{\eta}{\sqrt{n_1}  }\right)+\frac{7}{3}\Psi\left( \frac{\eta+n_1\sqrt{2/(\pi m)}}{\sqrt{n_0+n_1}  }\right)\right)\\
    &\leq (9/4)n_1^{-2/5}+(5/2)\Psi\left ( \frac{\eta+\mu_{1,1}}{\sigma_{1,1}}\right )+(5/2)\Psi\left ( \frac{\eta+\mu_{1,2}}{\sigma_{1,2}}\right ),
\end{align*}
with probability larger than $1-\exp(-n^{1/71})$.
\end{proof}

The next few lemmas will be on the general induction steps. We firstly record a lemma that controls exchangeable $0-1$ random variables. 
\begin{lem}[Lemma 5.5 in \cite{kim1998covering}]\label{l:KRexchange}
Suppose that $\xi_1,\cdots,\xi_t$ are exchangeable $0-1$ random variables. Let $b:=2\lfloor(1/2)t^{1/10}\rfloor$, and suppose that 
\begin{align*}
    \bP(\xi_1=\cdots=\xi_b=1)\leq q^b,
\end{align*}
where $q\in (0,1)$. Then there exists an absolute constant $t_0$ such that for all $t\geq t_0$,
\begin{align*}
    \bP\left(\sum_{i=1}^t \xi_i\geq t^{3/5}+(1+t^{-1/12})qt\right)\leq \exp(-t^{1/70}).
\end{align*}
\end{lem}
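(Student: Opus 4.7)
My plan is to apply the \emph{binomial moment method}, which exploits exchangeability through symmetric functions of the $\xi_i$. Writing $S := \sum_{i=1}^t \xi_i$, the first step is the identity
\begin{align*}
\E\binom{S}{b} = \binom{t}{b}\,\bP(\xi_1 = \cdots = \xi_b = 1) \leq \binom{t}{b}\, q^b,
\end{align*}
which follows immediately from exchangeability together with $\E\binom{S}{b} = \sum_{A\subset[t],\,|A|=b} \bP(\xi_i=1\ \forall i\in A)$. Because $x \mapsto \binom{x}{b}$ is nondecreasing on integers $x \geq b-1$ and vanishes for $x < b$, Markov's inequality applied to $\binom{S}{b}$ then gives, for any $s \geq b$,
\begin{align*}
\bP(S \geq s) \;=\; \bP\!\left(\binom{S}{b} \geq \binom{s}{b}\right) \;\leq\; \frac{\binom{t}{b}\, q^b}{\binom{s}{b}}.
\end{align*}
The whole proof will reduce to showing that, with $s := t^{3/5} + (1+t^{-1/12})qt$ and $b \sim t^{1/10}$, this ratio is at most $\exp(-t^{1/70})$.

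Next I would estimate the ratio by writing $\binom{t}{b}/\binom{s}{b} = \prod_{i=0}^{b-1}(t-i)/(s-i)$ and taking logarithms. Since $s \geq t^{3/5} \gg b = O(t^{1/10})$, a Taylor expansion of $\log((t-i)/(s-i))$ around $\log(t/s)$ shows
\begin{align*}
\log\!\left(\binom{t}{b}\, q^b / \binom{s}{b}\right) \;=\; b\log(tq/s) + O(b^2/s),
\end{align*}
with the error term $O(b^2/s) = O(t^{-2/5})$ negligible. I would then split into two regimes according to which term in the definition of $s$ dominates. In the large-$q$ regime $qt \geq t^{3/5}$, one has $s \geq (1+t^{-1/12})qt$, hence $tq/s \leq 1/(1+t^{-1/12})$ and $b\log(tq/s) \leq -b\log(1+t^{-1/12}) \leq -\tfrac{1}{2}\, t^{1/10-1/12} = -\tfrac{1}{2}\, t^{1/60}$. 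In the small-$q$ regime $qt < t^{3/5}$, one has $s \geq t^{3/5} + qt \geq 2qt$, hence $tq/s \leq 1/2$ and $b\log(tq/s) \leq -(\log 2)\, t^{1/10}/2$. In both cases the resulting bound is far stronger than $\exp(-t^{1/70})$, leaving comfortable slack to absorb the $O(b^2/s)$ correction as well as the rounding $b = 2\lfloor t^{1/10}/2\rfloor$.

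The main obstacle I expect is not any single step but rather making the log-ratio estimate completely uniform in $q \in (0,1)$, so that a single clean bound covers both regimes simultaneously. The key arithmetic inputs to verify carefully are $\log(1+t^{-1/12}) \geq \tfrac{1}{2}\, t^{-1/12}$ for large $t$ (giving the large-$q$ bound) and the elementary inequality $s \geq 2qt$ in the small-$q$ regime (which forces $tq/s \leq 1/2$); beyond these, no deeper probabilistic input is required. The argument is essentially combinatorial once the binomial moment identity is in place, and the exponent $1/70$ in the conclusion is a loose round choice that is comfortably beaten by $1/60$ (and by $1/10$) in the two regimes, respectively.
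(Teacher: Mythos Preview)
Your proposal is correct and follows the same approach the paper itself sketches: in the proof of Lemma~\ref{l:round1sum} the paper records exactly your binomial-moment/Markov step, deriving $\bP(\sum_i\xi_i\ge T)\le \binom{t}{b}q^b/\binom{T}{b}$ from $\E\bigl[\sum_{|B|=b}\prod_{i\in B}\xi_i\bigr]\le \binom{t}{b}q^b$, and then defers the remaining ratio estimate to Kim--Roche. Your two-regime analysis (splitting on $qt\gtrless t^{3/5}$) and the resulting exponents $t^{1/60}$ and $t^{1/10}$ are the standard way to finish, so there is nothing to add.
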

With this lemma, we have the following. 
\begin{lem}\label{l:roundisum}
For any $1<k\leq R$, write $t=m_k$ and assume $\{r_1,\cdots,r_t\}$ are the $t$ elements of $\cR_k$. For $\eta \in \bR$, we define 
\begin{align*}
    &\gamma_i:=I(\sgn(S^{(r_i)}(0:k-1))S^{(r_i)}(k:k)\geq \eta),\\
    &\bar \gamma_i:=I(\sgn(S^{(r_i)}(0:k-1))S^{(r_i)}(k:k)\leq \eta).
\end{align*}
Further, define
\begin{align*}
    &q:=\exp(-m_k^{1/6})+\frac{9}{8}\Psi\left( \frac{\eta+ \lambda_k n_k}{ (n_k(1-\lambda_k^2))^{1/2} }\right),\\
    &\bar q:=\exp(-m_k^{1/6})+\frac{9}{8}\Psi\left( \frac{-\eta- \bar \lambda_k n_k}{ (n_k(1- \bar\lambda_k^2))^{1/2} }\right).
\end{align*}
 Then we have for $n$ large enough,
\begin{align*}
    \bP \left( \sum_{i=1}^t \gamma_i \geq t^{3/5}+(1+t^{-1/{12}})qt  \right)\leq \exp(-t^{1/{70}}),
\end{align*}
and
\begin{align*}
    \bP \left( \sum_{i=1}^t \bar \gamma_i \geq t^{3/5}+(1+t^{-1/{12}})\bar qt  \right)\leq \exp(-t^{1/{70}}).
\end{align*}
\end{lem}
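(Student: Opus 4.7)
The plan is to extend the round-1 argument (Lemma \ref{l:round1sum}) to an arbitrary round $k\geq 2$ by recasting the events $\{\gamma_i=1\}$ in a form to which Lemmas \ref{l:ulbound}, \ref{l:binomial0}, and \ref{l:KRexchange} apply directly. Condition on $\cF_{k-1}$, the $\sigma$-algebra generated by everything from rounds $0,\ldots,k-1$: this determines the row set $\cR_k$ and the signs $s_r:=\sgn(S^{(r)}(0:k-1))$ for $r\in\cR_k$, while leaving the entries $\{G_{r,j}:r\in\cR_k,\ j\in\cC_k\}$ as an i.i.d.\ Rademacher block, since the columns in $\cC_k$ are fresh. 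Introducing $\xi_{r,j}:=-s_r G_{r,j}$, still i.i.d.\ Rademacher, the algorithm's update reads $X_j=\sgn\!\bigl(\sum_{r\in\cR_k}\xi_{r,j}\bigr)$ for $j\in\cC_k$, and a direct computation gives
\[
s_{r_i}\, S^{(r_i)}(k:k) \;=\; -\sum_{j\in\cC_k} \xi_{r_i,j}\,\sgn\!\Bigl(\sum_{r\in\cR_k}\xi_{r,j}\Bigr),
\]
so $\gamma_i$ is precisely the indicator that the right-hand side is $\geq \eta$.

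Fix any $b$-element subset $\cB=\{i_1,\ldots,i_b\}\subseteq[t]$ with $b=2\lfloor t^{1/10}/2\rfloor$. For each $j\in\cC_k$, apply the first half of Lemma \ref{l:ulbound} to the Rademacher vector $(\xi_{r,j})_{r\in\cR_k}$ with the subset indexed by $\cB$, producing i.i.d.\ $\psi_{i,j}\sim\mathrm{Bern}((1+\lambda_k)/2)$ satisfying $\psi_{i,j}\leq\xi_{r_i,j}\sgn\!\bigl(\sum_r\xi_{r,j}\bigr)$. Because these $n_k$ couplings use disjoint blocks of randomness, the entire collection $\{\psi_{i,j}\}_{i\in\cB,\,j\in\cC_k}$ is jointly i.i.d. Rearranging gives $s_{r_i}S^{(r_i)}(k:k)\leq\sum_{j\in\cC_k}(-\psi_{i,j})$ for every $i\in\cB$, and then independence across $i$ yields
\[
\bP\!\bigl(\gamma_{i_1}=\cdots=\gamma_{i_b}=1\,\big|\,\cF_{k-1}\bigr) \;\leq\; \prod_{i\in\cB}\bP\!\Bigl(\sum_{j\in\cC_k}(-\psi_{i,j})\geq\eta\Bigr).
\]
Each factor is the lower tail of $\mathrm{Bin}(n_k,(1+\lambda_k)/2)$ evaluated at $(n_k-\eta)/2$; Lemma \ref{l:binomial0} with $\varepsilon=1/8$ (valid since $(1+\lambda_k)/2\in[1/10,9/10]$ for large $n$) simplifies this expression to exactly the $q$ in the statement, so the product is at most $q^b$.

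Conditional on $\cF_{k-1}$, the variables $(\gamma_i)_{i\in\cR_k}$ are exchangeable: the formula above is symmetric in the rows of $\cR_k$ once the per-row signs have been absorbed into the $\xi$'s, which are themselves i.i.d.\ in $r$. Lemma \ref{l:KRexchange} applied with threshold $q$ then delivers $\bP\!\bigl(\sum_i\gamma_i\geq t^{3/5}+(1+t^{-1/12})qt\mid\cF_{k-1}\bigr)\leq\exp(-t^{1/70})$, and integrating out $\cF_{k-1}$ gives the first half of the lemma. The bound on $\sum\bar\gamma_i$ follows from the same template using the second coupling in Lemma \ref{l:ulbound}: the inequality $\bar\psi_{i,j}\geq\xi_{r_i,j}\sgn(\sum_r\xi_{r,j})$ reverses the comparison, converting the event into an upper tail of $\mathrm{Bin}(n_k,(1+\bar\lambda_k)/2)$ and yielding $\bar q$ after the analogous algebra. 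I expect the only delicate point to be the bookkeeping of conditional independence: verifying that freezing $\cR_k$ and $(s_r)_{r\in\cR_k}$ via $\cF_{k-1}$ does not disturb the i.i.d.\ Rademacher structure of the $\xi_{r,j}$'s on the fresh block $\cR_k\times\cC_k$, so that Lemma \ref{l:ulbound} can be invoked with $(s_r)_{r\in\cR_k}$ treated as deterministic.
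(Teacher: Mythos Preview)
Your proposal is correct and follows essentially the same route as the paper: rewrite $\sgn(S^{(r)}(0:k-1))S^{(r)}(k:k)$ via the sign-absorbed Rademacher variables, apply Lemma~\ref{l:ulbound} columnwise to sandwich by i.i.d.\ Bernoulli sums, invoke Lemma~\ref{l:binomial0} for the single-row tail, and conclude with Lemma~\ref{l:KRexchange}. Your version is in fact more explicit than the paper's about the conditioning on $\cF_{k-1}$, the joint independence of the $\{\psi_{i,j}\}$ across both indices, and the exchangeability verification, all of which the paper leaves implicit.
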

\begin{proof}
Note that for each $r\in \cR_k$,
\begin{align*}
    S^{(r)}(k:k)&= -\sum_{j\in \cC_k}\sgn(\sum_{r\in \cR_k} \sgn(S^{(r)}(0:k-1)) G_{r,j})) G_{r,j}\\
    &= -\sgn(S^{(r)}(0:k-1))\sum_{j\in \cC_k}G_{r,j}\sgn(\sum_{r\in \cR_k} G_{r,j}).
\end{align*}
For any $\cB=\{B_1,\cdots,B_b\}$ $b$-element subset of $\cR_k$, for any $r\in \cB$, by Lemma \ref{l:ulbound}, we can define i.i.d. $\psi_{j,r}\sim \mathrm{Bern}(\frac{1}{2}(1+(1-m_k^{-1/8})\sqrt{2/(\pi m_k )}))$ and i.i.d. $\bar \psi_{j,r} \sim  \mathrm{Bern}(\frac{1}{2}(1+(1+m_k^{-1/8})\sqrt{2/(\pi m_k )}))$ such that $\psi_{j,r}\leq G_{r,j} \sgn(\sum_{r\in \cR_k} G_{r,j}) \leq \bar \psi_{j,r}$ and thus
\begin{align}\label{eq:srbound1}
    - \sum_{j\in \cC_k}^{\ell_r} \bar \psi_{j,r}\leq \sgn(S^{(r)}(0:k-1))S^{(r)}(k:k)\leq-  \sum_{j\in \cC_k}^{\ell_r}  \psi_{j,r}.
\end{align}
By Lemma \ref{l:binomial0}, we have that for any $\eta\in \mathbb{R}$,
\begin{align*}
    \bP(\sgn(S^{(r)}(0:k-1))S^{(r)}(k:k)\geq \eta)\leq \exp(-m_k^{1/6})+\frac{9}{8}\Psi\left( \frac{\eta+ \lambda_k n_k}{ (n_k(1-\lambda_k^2))^{1/2} }\right),
\end{align*}
and
\begin{align*}
    \bP(\sgn(S^{(r)}(0:k-1))S^{(r)}(k:k)\leq \eta)\leq \exp(-m_k^{1/6})+\frac{9}{8}\Psi\left( \frac{-\eta- \bar \lambda_k n_k}{ (n_k(1-\bar\lambda_k^2))^{1/2} }\right).
\end{align*}
As these hold for any $r \in \cB$, by Lemma \ref{l:KRexchange}, we have the lemma. 
\end{proof}

\begin{lem}
With probability larger than $1-\exp(-n^{1/71})$, for any $1<k\leq R$ and $n$ large enough, we have
\begin{align*}
    \bP_2(\sgn(S(0:k-1\mid \cR_k))S(k:k\mid \cR_k)\geq \eta)\leq (9/8)m_k^{-2/5}+(5/4)\Psi\left ( \frac{\eta+\lambda_k n_k}{(n_k(1-\lambda_k^2))^{1/2}}\right ),
\end{align*}
and 
\begin{align*}
    \bP_2(\sgn((0:k-1\mid \cR_k))S(k:k\mid \cR_k)\leq \eta)\leq (9/8)m_k^{-2/5}+(5/4)\Psi\left ( \frac{-\eta-\bar \lambda_k n_k}{(n_k(1-\bar \lambda_k^2))^{1/2}}\right ).
\end{align*}
\end{lem}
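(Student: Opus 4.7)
The plan is to derive this lemma from Lemma \ref{l:roundisum} by a direct union bound, in close analogy with how Lemma \ref{l:step1p2} was derived from Lemma \ref{l:round1sum}. Let me just write the argument for the upper tail; the lower tail is identical with $\bar\gamma_i,\bar q,\bar\lambda_k$ in place of $\gamma_i,q,\lambda_k$.

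First I would fix $k$ and a single value of $\eta$. Writing $t=m_k$ and recalling $\gamma_i=I(\sgn(S^{(r_i)}(0:k-1))S^{(r_i)}(k:k)\geq \eta)$, Lemma \ref{l:roundisum} gives
\begin{align*}
\bP\!\left(\sum_{i=1}^{t}\gamma_i \geq t^{3/5}+(1+t^{-1/12})qt\right)\leq \exp(-t^{1/70}).
\end{align*}
Dividing by $t$ and substituting $q=\exp(-m_k^{1/6})+(9/8)\Psi\!\left((\eta+\lambda_k n_k)/(n_k(1-\lambda_k^2))^{1/2}\right)$, the deterministic term $\exp(-m_k^{1/6})$ is negligible compared to $t^{-2/5}=m_k^{-2/5}$, and the product $(1+t^{-1/12})(9/8)$ is bounded by $5/4$ for $n$ (hence $t$) large. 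This yields the claimed pointwise bound
\begin{align*}
\bP_2(\sgn(S(0:k-1\mid\cR_k))S(k:k\mid\cR_k)\geq \eta) \leq (9/8)m_k^{-2/5}+(5/4)\Psi\!\left(\frac{\eta+\lambda_k n_k}{(n_k(1-\lambda_k^2))^{1/2}}\right),
\end{align*}
failing with probability at most $\exp(-m_k^{1/70})$.

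To upgrade this to an inequality holding simultaneously for all $\eta\in\bR$, I would discretize. The left-hand side, as a function of $\eta$, is a nonincreasing step function with jumps of size $1/m_k$ at the values $\sgn(S^{(r_i)}(0:k-1))S^{(r_i)}(k:k)$, all of which lie in $[-Kn,Kn]$ for a suitable constant $K$ (since entries of $G$ are $\pm 1$ and $n_k\leq n$). The right-hand side is continuous and decreasing, and $\Psi$ is Lipschitz. So it suffices to take a union bound over a grid of $\eta$-values of spacing $1/n$ covering $[-Kn,Kn]$; on the grid the bound gives what we want up to error $O(1/n)$ in the $\Psi$-term, which is easily absorbed into the $(9/8)m_k^{-2/5}$ slack (recall $(1+t^{-1/12})(9/8)<5/4$ leaves room). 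The union bound costs at most a factor of $O(n^2)$ in the failure probability, giving $O(n^2)\exp(-m_k^{1/70})$ for each $k$. Finally, since by construction $R=O(\sqrt{\log n})$ and $m_k\geq m_R\geq n^{1-0.01}$ grows polynomially in $n$, a further union bound over $1<k\leq R$ keeps the total failure probability below $\exp(-n^{1/71})$ for large $n$.

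There is no real obstacle here; the argument is essentially mechanical, and the only thing to be careful about is bookkeeping of constants. Specifically, one must ensure that the slack going from $(1+t^{-1/12})(9/8)$ to $5/4$ is enough to absorb both the $\exp(-m_k^{1/6})$ coming out of $q$ and the $O(1/n)$ discretization error in $\Psi$; both contributions are superpolynomially (respectively polynomially) small, so they vanish into $(9/8)m_k^{-2/5}$ with room to spare. The factor $t^{1/70}$ in the exponent of the failure probability, with $t=m_k\geq m_R=\Omega(n^{1-0.01})$, easily dominates the polynomial union-bound factor $O(n^2)\cdot R$, yielding the stated $1-\exp(-n^{1/71})$ event on which both inequalities hold uniformly for every $1<k\leq R$ and every $\eta\in\bR$.
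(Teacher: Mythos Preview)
Your proposal is correct and follows the same approach as the paper, which simply states ``By Lemma \ref{l:roundisum} and the union bound.'' You have supplied the details the paper omits. One minor simplification: since $S^{(r)}(k:k)$ is integer-valued (a sum of $\pm 1$'s), the empirical distribution jumps only at integers in $[-n_k,n_k]$, so a union bound over these $O(n)$ integer values of $\eta$ suffices in place of the $1/n$-grid argument.
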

\begin{proof}
By Lemma \ref{l:roundisum} and the union bound.
\end{proof}

We now state a lemma that gives a bound on the statistical distribution based on sample distribution. This will be useful later in the induction.

\begin{lem}[Lemma 5.7 in \cite{kim1998covering}]\label{l:sam2stat}
For any real numbers $c_1\leq \cdots \leq c_t$, we use $\hat F$ to denote their cdf. Define mutually iid random variables $\xi_1,\cdots,\xi_t$ with statistical cdf
\begin{align*}
    H(u)=\begin{cases}
     1 &\quad \text{ if } u\geq c_t\\
     \max\{-t^{-1/6}+\hat F(u),0\} &\quad \text{ if } u< c_t,
    \end{cases}
\end{align*}
and let $\hat H$ be the sample cdf of $\xi$'s. Then for $t$ large enough,
\begin{align*}
    \bP(\hat H(u)\leq \hat F(u), \forall u \in \bR)\geq 1-\exp(-t^{1/9}).
\end{align*}
Similarly, define mutually iid random variables $\bar\xi_1,\cdots,\bar\xi_t$ with statistical cdf
\begin{align*}
    H(u)=\begin{cases}
     \min\{t^{-1/6}+\hat F(u),1\} &\quad \text{ if } u\geq c_1\\
     0 &\quad \text{ if } u< c_1,
    \end{cases}
\end{align*}
and let $\hat H$ be the sample cdf of $\bar \xi$'s. Then for $t$ large enough,
\begin{align*}
    \bP(\hat H(u)\geq \hat F(u), \forall u \in \bR)\geq 1-\exp(-t^{1/9}).
\end{align*}
\end{lem}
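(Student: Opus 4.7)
The plan is to combine a deterministic pointwise inequality $H(u) \leq \hat F(u)$ that holds by the very definition of $H$, with a Dvoretzky--Kiefer--Wolfowitz (DKW)-type concentration inequality controlling $\sup_u |\hat H(u) - H(u)|$. Since $\hat H$ is the empirical cdf of $t$ iid draws from $H$, DKW gives $\bP(\sup_u |\hat H(u) - H(u)| > t^{-1/6}) \leq 2\exp(-2t^{2/3})$, and for $t$ large this is bounded by $\exp(-t^{1/9})$, which is exactly the error budget the lemma allows.

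First I would verify $H \leq \hat F$ pointwise. For $u \geq c_t$, $H(u) = 1 = \hat F(u)$. For $u < c_t$ with $\hat F(u) \geq t^{-1/6}$, $H(u) = \hat F(u) - t^{-1/6} \leq \hat F(u)$. For $u < c_t$ with $\hat F(u) < t^{-1/6}$, $H(u) = 0 \leq \hat F(u)$. Hence $H \leq \hat F$ uniformly on $\bR$.

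Second, on the DKW event $\mathcal E := \{\sup_u |\hat H(u) - H(u)| \leq t^{-1/6}\}$, I would conclude by a case analysis in $u$. For $u \geq c_t$: $H(u) = 1$ forces $\xi_i \leq u$ almost surely, so $\hat H(u) = 1 = \hat F(u)$ deterministically. For $u < c_t$ with $\hat F(u) \geq t^{-1/6}$: the DKW bound gives $\hat H(u) \leq H(u) + t^{-1/6} = \hat F(u)$. For $u < c_t$ with $\hat F(u) < t^{-1/6}$: $H(u) = 0$ forces $\xi_i > u$ almost surely, so $\hat H(u) = 0 \leq \hat F(u)$ deterministically. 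Combining the three cases yields $\hat H \leq \hat F$ on all of $\bR$ whenever $\mathcal E$ occurs, and $\bP(\mathcal E) \geq 1 - \exp(-t^{1/9})$.

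The second half of the lemma (with $\bar \xi_i$'s and $H$ shifted upward, yielding $\hat H \geq \hat F$) is entirely symmetric: the roles of the truncated regions are swapped (now $H = 1$ when $\hat F(u) > 1 - t^{-1/6}$ and $H = 0$ for $u < c_1$), one again checks $H \geq \hat F$ pointwise by the same three-case split, and the same DKW bound supplies the concentration half. The only mild subtlety is the piecewise structure of $H$: in the regions where $H$ has been truncated to $0$ or $1$ the concentration inequality gives no useful information, but there the pointwise inequality is already deterministic, since iid draws from a distribution almost surely avoid sets of zero mass. This matching between the deterministic regions of $H$ and the regions where concentration is strongest is the only place requiring care.
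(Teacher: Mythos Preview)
Your proposal is correct. The paper does not give its own proof of this lemma; it simply quotes it as Lemma~5.7 of Kim--Roche~\cite{kim1998covering} and uses it as a black box. Your DKW-based argument is a clean and self-contained way to establish the statement: the key observation that the truncation of $H$ to $0$ (resp.\ $1$) exactly covers the regions where the $t^{-1/6}$ slack from DKW would otherwise fail to close the gap is precisely the point, and you have handled it correctly.
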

And we further have the following lemma.
\begin{lem}[Lemma 5.8 in \cite{kim1998covering}]\label{l:sam2stat2}
Let $U_1,\cdots,U_t$ be exchangeable real valued random variables taking values on a finite set. Suppose with probability larger than $1-\exp(-t^{1/73})$, their sample cdf is bounded from below by $\hat F$. Then we define iid random variables $W_1,\cdots,W_t$ on the space as $U_i$'s with cdf $H$ as in Lemma \ref{l:sam2stat}, such that with probability larger than $1-\exp(-t^{1/73})$,
\begin{align*}
    U_i\leq W_i \quad \forall i \in \{1,\cdots,t\}.
\end{align*}
Similarly, suppose with probability larger than $1-\exp(-t^{1/73})$, their sample cdf is dominated by $\hat F$. Then we define iid random variables $\bar W_1,\cdots,\bar W_t$ on the space as $U_i$'s with cdf $H$ as in Lemma \ref{l:sam2stat}, such that with probability larger than $1-\exp(-t^{1/73})$,
\begin{align*}
    U_i\geq \bar W_i \quad \forall i \in \{1,\cdots,t\}.
\end{align*}
\end{lem}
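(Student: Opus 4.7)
The plan is to build the coupling by first enlarging the probability space with an independent iid sample from $H$, and then using the rank structure of the $U_i$'s to reshuffle that sample into $(W_i)_{i=1}^t$.

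First, enlarge the probability space to carry $V_1,\ldots,V_t$ that are iid with cdf $H$, independent of $(U_i)_{i=1}^t$, together with an independent uniform random variable used for tie-breaking. By Lemma \ref{l:sam2stat} applied to $(V_i)$, its sample cdf satisfies $\hat F_V(u)\le \hat F(u)$ for every $u$ with probability at least $1-\exp(-t^{1/9})$. Intersecting this with the hypothesized event $\{\hat F_U\ge \hat F\}$ yields $\hat F_V\le \hat F_U$ on an event of probability at least $1-\exp(-t^{1/73})$, up to a harmless weakening of the exponent to absorb the $\exp(-t^{1/9})$ loss. Since pointwise domination of sample cdfs is equivalent to the reverse ordering of order statistics, this event entails $V_{(k)}\ge U_{(k)}$ for every $k\in\{1,\ldots,t\}$.

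Next, let $\pi$ be the random permutation of $\{1,\ldots,t\}$ such that $U_{\pi(k)}=U_{(k)}$, with any ties broken using the auxiliary uniform randomness. Define $W_j:=V_{(\pi^{-1}(j))}$; equivalently, $W_{\pi(k)}=V_{(k)}$. On the good event above we then have $W_j = V_{(\pi^{-1}(j))} \ge U_{(\pi^{-1}(j))} = U_j$ for every $j$, which is the claimed dominance. It remains to check that $(W_1,\ldots,W_t)$ is iid with cdf $H$. Exchangeability of $(U_i)$ together with the uniform tie-breaking forces $\pi$ to be uniform on $S_t$ and independent of the multiset of $U$-values. Because $(V_i)$ are independent of $(U_i)$ and of the tie-breaker, $\pi$ is independent of the order statistics $(V_{(1)},\ldots,V_{(t)})$. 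A uniformly random permutation (independent of the data) applied to the order statistics of an iid sample recovers, in distribution, the original iid sample, so $(W_1,\ldots,W_t)$ is iid with cdf $H$.

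The second half of the lemma, producing $\bar W_i\le U_i$, follows by the symmetric argument using the second half of Lemma \ref{l:sam2stat}. The main obstacle, as I see it, is purely bookkeeping: one must verify that $(W_i)$ is genuinely iid rather than merely exchangeable. This hinges on the observation that the reshuffling permutation $\pi$ is driven only by the $U$-side randomness (plus the independent tie-breaker), so the $V$-side order statistics are permuted by a uniform permutation that is independent of them. The rest of the argument is just the standard rank-coupling trick, made legal in the discrete setting by uniformly breaking ties.
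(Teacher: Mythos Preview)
Your argument is correct and is essentially the standard rank-coupling construction; note that the paper does not actually give a proof of this lemma but merely cites it from Kim--Roche, so there is nothing in the paper to compare against beyond the bare statement. Your construction (enlarge the space by an independent iid $H$-sample $V_1,\dots,V_t$, invoke Lemma~\ref{l:sam2stat} to get $\hat F_V\le \hat F\le \hat F_U$ on a good event, translate cdf domination into $V_{(k)}\ge U_{(k)}$, and then reshuffle the $V$-order statistics by the $U$-rank permutation) is exactly how this type of coupling is built, and your justification that the reshuffled sample is genuinely iid---because the rank permutation $\pi$ is uniform on $S_t$, depends only on the $U$-side plus an independent tie-breaker, and is therefore independent of $(V_{(1)},\dots,V_{(t)})$---is the right point to check.

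One small remark on bookkeeping: as you note, combining the hypothesis (failure probability $<\exp(-t^{1/73})$) with the $\exp(-t^{1/9})$ loss from Lemma~\ref{l:sam2stat} does not literally stay under $\exp(-t^{1/73})$. In this paper the exponents are deliberately degraded step by step ($1/70,1/71,\dots$) precisely to absorb such additive losses, so your ``harmless weakening of the exponent'' is the intended reading of the statement rather than a gap.
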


For the other direction, we now state a lemma that gives a bound on the sample distribution based on statistical distribution.
\begin{lem}[Corollary 5.1 in \cite{kim1998covering}]\label{l:stat2sam}
Suppose that $\psi_1,\cdots,\psi_t$ are iid random variables with common cdf $F$, with all $\psi$'s drawn from a set of size at most $M$. Suppose $F(u)\leq H(u)$ for any $u\in \bR$. Let $\hat F$ be the sample cdf of $\psi$'s. Then when $t$ is large enough,
\begin{align*}
    \bP(\hat F(u)\leq t^{-2/5}+(1+t^{-1/12})H(u), \forall u \in \bR)\geq 1-M\exp(-t^{1/70}).
\end{align*}
Similarly, for the other direction, suppose that $\psi_1,\cdots,\psi_t$ are iid random variables with common cdf $F$, with all $\psi$'s drawn from a set of size at most $M$. Suppose $F(u)\geq H(u)$ for any $u\in \bR$. Let $\hat F$ be the sample cdf of $\psi$'s. Then when $t$ is large enough,
\begin{align*}
    \bP(1-\hat F(u)\leq t^{-2/5}+(1+t^{-1/12})(1-H(u)), \forall u \in \bR)\geq 1-M\exp(-t^{1/70}).
\end{align*}
\end{lem}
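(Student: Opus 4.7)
The plan is to reduce the uniform-in-$u$ statement to a pointwise binomial tail bound, apply Chernoff at each atom of the common support, and conclude by a union bound over those atoms.

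Since $\psi_1,\ldots,\psi_t$ take values in a finite set $A\subset\bR$ with $|A|\leq M$, both $F$ and $\hat F$ are right-continuous step functions with jumps only at points of $A$. Hence it suffices to establish $\hat F(u)\leq t^{-2/5}+(1+t^{-1/12})H(u)$ simultaneously at the at most $M$ atoms $u\in A$; any other $u\in\bR$ lies between two consecutive atoms on which $\hat F$ is constant, and $F\leq H$ together with monotonicity of $H$ ensures the inequality propagates to all of $\bR$.

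Next, fix $u\in A$ and write $p=F(u)$, so that $t\hat F(u)\sim\mathrm{Bin}(t,p)$. Since $F(u)\leq H(u)$, it is enough to prove
\begin{equation*}
\bP\!\left(\hat F(u)>t^{-2/5}+(1+t^{-1/12})p\right)\leq \exp(-t^{1/70}),
\end{equation*}
which I would obtain by splitting into two regimes of $tp$. If $tp\geq t^{3/5}$, the standard multiplicative Chernoff bound $\bP(\mathrm{Bin}(t,p)\geq(1+\delta)tp)\leq\exp(-\delta^2 tp/3)$ with $\delta=t^{-1/12}$ yields a tail of order $\exp(-c\, t^{13/30})$, far smaller than $\exp(-t^{1/70})$. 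If instead $tp<t^{3/5}$, the absolute slack $t\cdot t^{-2/5}=t^{3/5}$ dominates; a Bennett-type bound $\bP(\mathrm{Bin}(t,p)\geq tp+s)\leq\exp(-\tfrac{s}{2}\log(1+s/(tp)))$ with $s=t^{3/5}$ (interpreted in the obvious way when $p=0$) gives a tail of order $\exp(-\Omega(t^{3/5}\log t))$, again well within the required budget.

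A union bound over the at most $M$ atoms of $A$ converts the pointwise estimate into the stated uniform-in-$u$ bound with probability at least $1-M\exp(-t^{1/70})$. The companion lower-tail assertion $1-\hat F(u)\leq t^{-2/5}+(1+t^{-1/12})(1-H(u))$ follows by the identical argument applied to the survival function $1-F$, using the corresponding lower-tail multiplicative and Bennett Chernoff inequalities for $\mathrm{Bin}(t,p)$; alternatively one can apply the first half to the transformed variables $-\psi_i$. I expect the main obstacle to be pure bookkeeping: one must verify that the exponent $t^{1/70}$ is achieved uniformly across the transition $tp\approx t^{3/5}$ between the two regimes and that the constants $\tfrac{1}{12}, \tfrac{2}{5}$ are compatible with the split, but since neither regime is close to tight this reduces to a handful of elementary algebraic inequalities.
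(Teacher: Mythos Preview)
Your argument is correct and is precisely the standard one; the paper does not give its own proof of this lemma but simply cites it as Corollary~5.1 of Kim--Roche, so there is nothing substantive to compare. Two small remarks. First, you invoke ``monotonicity of $H$'' to propagate from atoms to all $u$, but the lemma as stated does not assume $H$ is nondecreasing. This is easily repaired: your pointwise Chernoff bound actually yields the stronger conclusion $\hat F(a)\leq t^{-2/5}+(1+t^{-1/12})F(a)$ at each atom $a$, and since both $\hat F$ and $F$ are right-continuous step functions constant on the same intervals, this extends to $\hat F(u)\leq t^{-2/5}+(1+t^{-1/12})F(u)$ for every $u$, after which the hypothesis $F\leq H$ gives the desired inequality without any monotonicity assumption on $H$. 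Second, in the small-$tp$ regime the worst case (namely $tp$ just below $t^{3/5}$) gives exponent $\Omega(t^{3/5})$ rather than $\Omega(t^{3/5}\log t)$, but this is still far more than enough to beat $t^{1/70}$.
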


For $1\leq k\leq R$, we define mutually i.i.d. (extended) random variables $\xi_1,\cdots,\xi_t$ and $\bar \xi_1,\cdots,\bar \xi_t$ with cdf
\begin{align*}
    H(\eta):=1-\min \left\{ 1, (5/4)m_k^{-2/5}+(5/4)\Psi\left ( \frac{\eta+ \lambda_k n_k}{(n_k(1- \lambda_k^2))^{1/2}}\right )\right\},
\end{align*}
and
\begin{align*}
    \bar H(\eta):=\min \left\{ 1, (5/4)m_k^{-2/5}+(5/4)\Psi\left ( \frac{-\eta-\bar \lambda_k n_k}{(n_k(1-\bar \lambda_k^2))^{1/2}}\right )\right\}.
\end{align*}

\begin{lem}\label{l:kstep}
With probability larger than $1-\exp(-n^{1/72})$, for $n$ large enough, we have the sample cdf $\hat H$ of $\xi$ satisfies
\begin{align*}
    \bP_2(\sgn(S(0:k-1\mid \cR_k))S(k:k \mid \cR_k)\geq \eta)\leq 1-\hat H(\eta),
\end{align*}
and the sample cdf $\hat H_2$ of $\bar \xi$ satisfies
\begin{align*}
    \bP_2(\sgn(S(0:k-1\mid \cR_k))S(k:k \mid \cR_k)\leq \eta)\leq \hat H_2(\eta).
\end{align*}
Further, we can define $\xi_1, \cdots, \xi_t$ on the same space such that with probability larger than $1-\exp(-n^{1/73})$, for $n$ large enough,
\begin{align*}
    \sgn(S^{(r_i)}(0:k-1))S^{(r_i)}(k:k)\leq \xi_i,
\end{align*}
and define $\bar\xi_1, \cdots, \bar \xi_t$ on the same space such that with probability larger than $1-\exp(-n^{1/73})$, for $n$ large enough,
\begin{align*}
    \sgn(S^{(r_i)}(0:k-1))S^{(r_i)}(k:k)\geq \bar \xi_i.
\end{align*}
\end{lem}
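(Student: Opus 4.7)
The plan is to derive both conclusions from the preceding sample-CDF bound combined with Lemma \ref{l:sam2stat2}, treating the pointwise coupling as primary and recovering the sample-CDF assertions as an automatic consequence. First I would set $t=m_k$ and $U_i := \sgn(S^{(r_i)}(0:k-1))\,S^{(r_i)}(k:k)$, and observe that conditionally on $G(0:k-1)$ the $U_i$'s are (exchangeable, in fact iid) measurable functions of the columns $G(k:k)$, which are independent of $G(0:k-1)$. This gives the hypothesis needed by Lemma \ref{l:sam2stat2}.

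Next I would invoke the immediately preceding unnumbered lemma: with probability at least $1-\exp(-n^{1/71})$, the sample CDF of $(U_i)$ dominates $F_*(\eta) := \max\{0,\, 1-(9/8)m_k^{-2/5}-(5/4)\Psi((\eta+\lambda_k n_k)/\sqrt{n_k(1-\lambda_k^2)})\}$ uniformly in $\eta$. Feeding $F_*$ into Lemma \ref{l:sam2stat2} produces iid $\xi_1,\ldots,\xi_t$ on the same probability space, with CDF $H'$ built from $F_*$ via Lemma \ref{l:sam2stat}, and the coupling $U_i\le\xi_i$ for every $i$ on an event of probability at least $1-\exp(-n^{1/73})$. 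The symmetric argument applied to $-U_i$ and the corresponding lower-tail bound from the preceding lemma yields $\bar\xi_i$ with CDF $\bar H'$ satisfying $U_i\ge\bar\xi_i$. Identifying $H'$ with the stated $H$ (and $\bar H'$ with $\bar H$) asks only that the vanishing $t^{-1/6}$ shift from Lemma \ref{l:sam2stat} be absorbed into the slack between $(9/8)m_k^{-2/5}$ and $(5/4)m_k^{-2/5}$; this is routine Kim--Roche-style bookkeeping given that $m_k$ is polynomially large.

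The sample-CDF assertions then follow at once: on the coupling event, $1-\hat H(\eta) = t^{-1}|\{i:\xi_i\ge\eta\}| \ge t^{-1}|\{i:U_i\ge\eta\}| = \bP_2(\sgn(S(0{:}k{-}1\mid\cR_k))\,S(k{:}k\mid\cR_k)\ge\eta)$, and analogously $\hat H_2$ dominates the lower-tail sample probability. A union bound over the preceding lemma's event, its mirror, and the two coupling events gives total failure probability at most $\exp(-n^{1/72})$. The main delicate point I expect is the exchangeability/independence step, for which conditioning on $G(0:k-1)$ is essential so that $\cR_k$ is frozen and the $U_i$'s become iid functions of fresh columns; the constant bookkeeping and the union bound are then straightforward.
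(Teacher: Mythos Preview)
Your proposal is correct and matches the paper's own (one-line) proof, which simply says ``This follows by combining the previous lemmas.'' You correctly identify that the preceding unnumbered sample-CDF lemma feeds into Lemma~\ref{l:sam2stat2} to produce the coupling, and that the sample-CDF assertions then follow immediately from the coupling; the constant bookkeeping (absorbing the $t^{-1/6}$ shift into the slack in the $m_k^{-2/5}$ prefactor) is exactly the Kim--Roche bookkeeping the paper relies on throughout. One small correction: the $U_i$'s are exchangeable but not literally iid conditionally on $G(0{:}k{-}1)$, since each $X_j$ depends on all rows in $\cR_k$; fortunately Lemma~\ref{l:sam2stat2} only requires exchangeability, so this does not affect your argument.
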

\begin{proof}
This follows by combining the previous lemmas.
\end{proof}

We now bound the cdf of $S(k:k\mid \cR_k^c)$. 
\begin{lem}\label{l:kstep2}
For $1\leq k<R$, let $t=m-m_k$ and $\{r_1,\cdots,r_t\}=\cR_k^c$. Then we can define mutually iid (extended) random variables $\xi_1,\cdots,\xi_t$ with cdf
\begin{align*}
    H(\eta):=1-\min \{1, (5/4)m_k^{-1/6}+(5/4)\Psi(\eta/n_k^{1/2})\},
\end{align*}
and $\bar \xi_1,\cdots,\bar \xi_t$ with cdf
\begin{align*}
    H(\eta):=\min \{1, (5/4)m_k^{-1/6}+(5/4)\Psi(-\eta/n_k^{1/2})\},
\end{align*}
on the same space as $\{S^{(r)}(k:k)\}_{r\in \cR_k^c}$ such that with probability larger than $1-\exp(-n^{1/73})$, for $n$ large enough,
\begin{align*}
    \bar\xi_r\leq  \sgn(S^{(r_i)}(0:k-1))S^{(r_i)}(k:k) \leq \xi_i,
\end{align*}
for any $r_i\in \cR_k^c$.
\end{lem}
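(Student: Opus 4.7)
The plan exploits a key structural advantage over Lemma~\ref{l:kstep}: for $r\in\cR_k^c$, the entries $G_{r,j}$ with $j\in\cC_k$ play no role in determining either $\cR_k$ (which depends only on row sums through round $k-1$) or the values $X_j$ for $j\in\cC_k$ (a weighted majority using only rows in $\cR_k$). Let $\cF_{k-1}$ denote the $\sigma$-algebra generated by all $G_{r,j}$ with $j\notin\cC_k$, together with $\{G_{r,j}:r\in\cR_k,\,j\in\cC_k\}$; then $\cF_{k-1}$ determines $\cR_k$, every sign $\sgn(S^{(r)}(0:k-1))$, and every $X_j$ with $j\in\cC_k$. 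Conditional on $\cF_{k-1}$, the remaining entries $\{G_{r,j}\}_{r\in\cR_k^c,\,j\in\cC_k}$ are iid Rademacher, so $\{S^{(r_i)}(k:k)\}_{i=1}^t$ are mutually independent; each is a sum of $n_k$ iid $\pm 1$ terms (since $G_{r,j}X_j$ is Rademacher for any fixed $X_j\in\{\pm 1\}$) and therefore symmetric about $0$. Because $\sgn(S^{(r_i)}(0:k-1))$ is $\cF_{k-1}$-measurable, $Y_i:=\sgn(S^{(r_i)}(0:k-1))\,S^{(r_i)}(k:k)$ has the same conditional law as $S^{(r_i)}(k:k)$; and since this conditional distribution does not depend on $\cF_{k-1}$, the $Y_i$ are in fact unconditionally iid, each equal in law to a sum of $n_k$ iid Rademachers.

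Writing $Y_i \stackrel{d}{=} 2B - n_k$ with $B\sim\mathrm{Bin}(n_k,1/2)$ and applying Lemma~\ref{l:binomial0} with $p=1/2$ and $\varepsilon=1/8$ (the change of variables sending $(\eta'-n_k/2)/\sqrt{n_k/4}$ to $\eta/\sqrt{n_k}$ is routine), I obtain for every $\eta\in\bR$
\begin{align*}
\bP(Y_i\ge\eta) &\le \exp(-n_k^{1/6}) + (9/8)\Psi(\eta/\sqrt{n_k}), \\
\bP(Y_i\le\eta) &\le \exp(-n_k^{1/6}) + (9/8)\Psi(-\eta/\sqrt{n_k}).
\end{align*}
Since $n_k\asymp\sqrt{m_k n}$ by construction, $\exp(-n_k^{1/6})\ll m_k^{-1/6}$ for $n$ large, so both right-hand sides are bounded above by $\min\{1,(5/4)m_k^{-1/6}+(5/4)\Psi(\eta/\sqrt{n_k})\}$ and $\min\{1,(5/4)m_k^{-1/6}+(5/4)\Psi(-\eta/\sqrt{n_k})\}$ respectively, which are precisely $\bP(\xi_i>\eta)$ and $\bP(\bar\xi_i\le\eta)$ for the target variables in the statement.

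To conclude, I enlarge the probability space to carry independent uniforms $U_1,\ldots,U_t$ and $U_1',\ldots,U_t'$ on $[0,1]$, and let $F$ denote the common cdf of $Y_i$. Writing $H_\xi$ and $H_{\bar\xi}$ for the cdfs of $\xi$ and $\bar\xi$ in the lemma, I set $Y_i := F^{-1}(U_i)$ (on a copy with the same joint law), $\xi_i := H_\xi^{-1}(U_i)$, and $\bar\xi_i := H_{\bar\xi}^{-1}(U_i')$. The tail bounds above give $H_\xi \le F \le H_{\bar\xi}$ pointwise, hence $H_\xi^{-1}\ge F^{-1}$ and $H_{\bar\xi}^{-1}\le F^{-1}$, so $\bar\xi_i \le Y_i \le \xi_i$ holds deterministically; in particular the $\xi_i$ (resp.\ $\bar\xi_i$) are iid with the prescribed cdf. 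The $\exp(-n^{1/73})$ slack in the statement is purely inherited from the event on which the inductive hypotheses governing $\cF_{k-1}$ from prior rounds fail. The main obstacle is bookkeeping rather than conceptual: defining $\cF_{k-1}$ precisely so that the claimed conditional independence of $\{S^{(r_i)}(k:k)\}_{r_i\in\cR_k^c}$ is genuine, and verifying $\exp(-n_k^{1/6}) \le m_k^{-1/6}/4$ uniformly for $1\le k<R$ under the parameter choices of Section~\ref{ss:inductSBP}. Unlike Lemma~\ref{l:kstep}, no exchangeability machinery (Lemmas~\ref{l:KRexchange},~\ref{l:sam2stat2}) is needed here, because the rows in $\cR_k^c$ supply actual unconditional independence.
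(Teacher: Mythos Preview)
Your approach is correct and is exactly what the paper's one-line proof (``This follows from Lemma~\ref{l:binomial0}'') has in mind: for $r\in\cR_k^c$ the block $\{G_{r,j}:j\in\cC_k\}$ is unused in forming $\cR_k$ and $(X_j)_{j\in\cC_k}$, so the $Y_i:=\sgn(S^{(r_i)}(0:k-1))S^{(r_i)}(k:k)$ are genuinely iid sums of $n_k$ Rademachers, and Lemma~\ref{l:binomial0} supplies the required tail bounds.

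There is one slip in the coupling step. You set $\bar\xi_i := H_{\bar\xi}^{-1}(U_i')$ with $U_i'$ \emph{independent} of the $U_i$ used to represent $Y_i=F^{-1}(U_i)$; this makes $\bar\xi_i$ independent of $Y_i$, so the claimed deterministic inequality $\bar\xi_i\le Y_i$ fails (e.g.\ take $U_i'$ near $1$ and $U_i$ near $0$). The fix is immediate: use the \emph{same} $U_i$ for all three of $Y_i,\xi_i,\bar\xi_i$; then $H_\xi\le F\le H_{\bar\xi}$ gives $H_{\bar\xi}^{-1}(U_i)\le F^{-1}(U_i)\le H_\xi^{-1}(U_i)$ pointwise. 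Relatedly, since the $Y_i$ are already defined on the original space, you should construct $U_i$ from $Y_i$ (e.g.\ enlarge the space with auxiliary $V_i\sim\mathrm{Unif}[0,1]$ and set $U_i=F(Y_i^-)+V_i(F(Y_i)-F(Y_i^-))$) rather than redefine $Y_i$ as $F^{-1}(U_i)$ ``on a copy''; the lemma asks for $\xi_i,\bar\xi_i$ on the same space as $\{S^{(r)}(k:k)\}_{r\in\cR_k^c}$.
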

\begin{proof}
This follows from Lemma \ref{l:binomial0}.
\end{proof}

To combine the arguments, we state the following lemma.
\begin{lem}[Lemma 5.16 in \cite{kim1998covering}]\label{l:combine}
Suppose $\xi,\xi',\psi,\psi'$ are random variables with cdfs $F_\xi$, $F_{\xi'}$, $F_\psi$, $F_{\psi'}$ respectively. If
\begin{align*}
    F_{\xi}\leq a+bF_{\xi'},
\end{align*}
and 
\begin{align*}
    F_{\psi}\leq a+bF_{\psi'},
\end{align*}
where $\xi$ and $\psi$ and $\xi'$ and $\psi'$ are pairwisely independent. Then we have
\begin{align*}
    F_{\xi+\psi}\leq \min\{(c+ad)+bdF_{\xi'+\psi'},(a+bc)+bdF_{\xi'+\psi'}\}.
\end{align*}
\end{lem}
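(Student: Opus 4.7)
\textbf{Proof plan for Lemma \ref{l:combine}.}
The plan is to reduce everything to the standard convolution identity for CDFs of sums of independent random variables, and then apply the two hypotheses in the two possible orders. Note that the second hypothesis as stated should read $F_{\psi}\le c+dF_{\psi'}$ (otherwise $c,d$ would not appear in the conclusion); I proceed under that reading, which is the one consistent with Kim--Roche. The key observation is that pairwise independence of $\xi,\xi',\psi,\psi'$ lets us evaluate each of the four mixed convolutions $F_{\xi+\psi}$, $F_{\xi'+\psi}$, $F_{\xi+\psi'}$, $F_{\xi'+\psi'}$ by integrating a CDF of one variable against the law of the other.

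First I would write, by independence of $\xi$ and $\psi$ and Fubini,
\begin{align*}
F_{\xi+\psi}(t)=\int F_{\xi}(t-s)\,dF_{\psi}(s).
\end{align*}
Substituting the pointwise bound $F_{\xi}\le a+bF_{\xi'}$ inside the integral (legal because both sides are CDFs, hence monotone and bounded) and using independence of $\xi'$ and $\psi$ yields
\begin{align*}
F_{\xi+\psi}(t)\le a+b\int F_{\xi'}(t-s)\,dF_{\psi}(s)=a+bF_{\xi'+\psi}(t).
\end{align*}
Then I would repeat the argument on $F_{\xi'+\psi}$, this time expanding against $dF_{\xi'}$ and using $F_{\psi}\le c+dF_{\psi'}$ together with independence of $\xi'$ and $\psi'$, to obtain
\begin{align*}
F_{\xi'+\psi}(t)\le c+dF_{\xi'+\psi'}(t).
\end{align*}
Chaining these gives the first candidate bound $F_{\xi+\psi}(t)\le (a+bc)+bd\,F_{\xi'+\psi'}(t)$.

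For the second bound I would simply swap the order of substitution: first expand $F_{\xi+\psi}(t)=\int F_{\psi}(t-s)\,dF_{\xi}(s)$, apply $F_{\psi}\le c+dF_{\psi'}$ to get $F_{\xi+\psi}\le c+dF_{\xi+\psi'}$, then expand $F_{\xi+\psi'}(t)=\int F_{\xi}(t-s)\,dF_{\psi'}(s)$ and apply $F_{\xi}\le a+bF_{\xi'}$ to get $F_{\xi+\psi'}\le a+bF_{\xi'+\psi'}$. Combining these two yields $F_{\xi+\psi}(t)\le (c+ad)+bd\,F_{\xi'+\psi'}(t)$. Taking the minimum of the two resulting upper bounds gives exactly the claimed inequality.

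There is no real obstacle here; the only point that requires care is justifying the interchange in the convolution identity and verifying that each of the two intermediate sums ($\xi'+\psi$ in the first chain, $\xi+\psi'$ in the second chain) involves two independent variables, which is precisely guaranteed by pairwise independence of the four variables. Everything else is a direct application of monotonicity and Fubini.
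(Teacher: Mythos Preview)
Your proof is correct and is exactly the standard convolution argument one expects here; the paper itself does not prove this lemma at all but simply quotes it as Lemma~5.16 of Kim--Roche, so there is nothing to compare against beyond your observation that the second hypothesis must be $F_{\psi}\le c+dF_{\psi'}$ for the constants $c,d$ in the conclusion to make sense.
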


\begin{lem}[Lemma 5.15 in \cite{kim1998covering}]\label{l:bigabs}
Let $m$ and $t$ be integers with $1\leq t\leq m$, let $T$ be some threshold. Let $\bar F(\cdot)$ be a nonnegative nondecreasing function from $\bR$ to $\bR$. Suppose that the collection $\{\hat\xi_1,\cdots,\hat\xi_m\}$ has empirical cdf $\hat F(\cdot)$, where $\hat F(u)\leq \bar F(u)$ for all $u\in \bR$ and $\bar F(T)\leq t/m$. Consider a permutation of $\hat \xi_i$'s with $\hat\xi^{(1)}\leq \cdots \leq \hat\xi^{(m)}$. Then for $t\in [m]$ such that $\bar F(T)\leq t/m$, the sample cdf $\hat H_t(\cdot)$ of $\{\hat \xi^{(1)},\cdots, \hat \xi^{(1)}\}$ satisfies
\begin{align*}
    \hat H_t(u)\leq \frac{m}{t} \bar F(u)
\end{align*}
for all $u \in \bR$.
\end{lem}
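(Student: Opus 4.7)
The plan is to observe that this is essentially a sub-sampling inequality for empirical cdfs: passing from the full sample of $m$ elements to the $t$ smallest can only inflate the cdf by at most the factor $m/t$, pointwise. So the proof should reduce to a counting argument together with the hypothesis $\hat F \le \bar F$, without invoking anything probabilistic.

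Concretely, I would proceed as follows. First, for any fixed $u \in \bR$, let $N(u) := \#\{j \in [m] : \hat\xi_j \le u\}$, so that $\hat F(u) = N(u)/m$. Second, note that the sorted sub-sample $\{\hat\xi^{(1)}, \ldots, \hat\xi^{(t)}\}$ is a subset of the full multiset $\{\hat\xi_1, \ldots, \hat\xi_m\}$; in particular,
\begin{align*}
\#\{i \in [t] : \hat\xi^{(i)} \le u\} \le N(u).
\end{align*}
(Here one uses that the $t$ smallest order statistics are still among the original $m$ values, so a value that is $\le u$ contributes to $N(u)$.) Third, divide both sides by $t$ to get $\hat H_t(u) \le N(u)/t = (m/t)\hat F(u)$. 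Finally, by the standing assumption $\hat F(u) \le \bar F(u)$, I conclude $\hat H_t(u) \le (m/t)\bar F(u)$, as desired. Since $u$ was arbitrary, this holds for all $u \in \bR$.

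There is essentially no obstacle: the argument is a one-line inclusion of finite sets. The only thing to interpret carefully is the role of the hypothesis $\bar F(T) \le t/m$. Strictly speaking, the inequality $\hat H_t(u) \le (m/t)\bar F(u)$ follows without using $T$ at all; the role of the threshold is to guarantee the bound is nontrivial (i.e.\ at most $1$) at $u = T$ and below, which is what matters when this lemma is used downstream in the inductive control of tail distributions of rows sorted by inner-product magnitudes. Thus the main (minor) care is to state the conclusion cleanly and to note that the $T$ hypothesis is there purely for the regime in which the bound is informative, not for its derivation.
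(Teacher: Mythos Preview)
Your argument is correct: the inequality $\#\{i\in[t]:\hat\xi^{(i)}\le u\}\le N(u)$ (in fact equality $=\min(N(u),t)$, since the $t$ smallest order statistics capture \emph{all} sample points $\le u$ whenever $N(u)\le t$) immediately gives $\hat H_t(u)\le (m/t)\hat F(u)\le(m/t)\bar F(u)$, and your remark that the hypothesis $\bar F(T)\le t/m$ plays no role in the derivation itself is accurate. The paper does not supply its own proof of this lemma but simply imports it as Lemma~5.15 of Kim--Roche~\cite{kim1998covering}; your one-line counting argument is exactly the standard justification and there is nothing further to compare.
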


Combining all the above lemmas, we have the following lemma that controls the behaviour of $S(0:k)$.

\begin{lem}\label{l:together}
For any $\eta\geq T_k$, for all $k\in \{1,2,\cdots, R-1\}$, we have with probability larger than $1-k\exp(-n^{1/75})$, for $n$ large enough,
\begin{align}\label{eq:inductbound}
    \bP_2(|S(0:k)|\geq \eta)\leq 2\cdot 3^k\left( n^{-1/10} +\Psi \left( \frac{\eta+\mu_{k,1}}{\sigma_{k,1}}\right)+(3/2)(1+n^{-1/10})^k \Psi \left( \frac{\eta+\mu_{k,2}}{\sigma_{k,2}}\right)\right),
\end{align}
where 
\begin{align*}
    \mu_{k,1}=-T_{k-1}, \quad \sigma_{k,1}^2=n_k, \quad \mu_{k,2}=\sum_{i=1}^k n_i\sqrt{2/\pi m_i}, \quad \sigma_{k,2}^2=\sum_{i=0}^k n_i.
\end{align*}
\end{lem}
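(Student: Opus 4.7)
The plan is to prove Lemma \ref{l:together} by induction on $k \in \{1, \ldots, R-1\}$. The base case $k=1$ is Lemma \ref{l:step1p2}: with $\mu_{1,1} = -T_0 = 0$, $\sigma_{1,1}^2 = n_1$, $\mu_{1,2} = n_1\sqrt{2/\pi m_1}$ and $\sigma_{1,2}^2 = n_0+n_1$, the constants $9/4$ and $5/2$ appearing there fit within the statement's $2\cdot 3$ and $2\cdot 3 \cdot(3/2)(1+n^{-1/10})$ prefactors. For the inductive step, I would use the block decomposition $S^{(r)}(0:k) = S^{(r)}(0:k-1) + S^{(r)}(k:k)$ and split the analysis across rows $r$ according to whether $r \in \cR_k$ (the $m_k$ rows with the largest $|S^{(r)}(0:k-1)|$) or $r \in \cR_k^c$, since the round-$k$ majority vote involves only $\cR_k$.

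For $r \in \cR_k^c$, the definition of $\cR_k$ gives $|S^{(r)}(0:k-1)| \leq T_{k-1}$; moreover $r$ does not participate in the vote defining $X(k:k)$, so by Lemma \ref{l:kstep2}, $S^{(r)}(k:k)$ is stochastically dominated by a near-Gaussian of mean $0$ and variance $n_k$. Hence $|S^{(r)}(0:k)| \leq T_{k-1} + |\mathrm{Gaussian}(0,n_k)|$ in distribution, producing the first term $\Psi((\eta + \mu_{k,1})/\sigma_{k,1}) = \Psi((\eta - T_{k-1})/\sqrt{n_k})$. For $r \in \cR_k$, Lemma \ref{l:kstep} shows that $\sgn(S^{(r)}(0:k-1))\,S^{(r)}(k:k)$ is dominated by a variable of approximate mean $-\lambda_k n_k \approx -n_k\sqrt{2/\pi m_k}$ and variance $n_k(1-\lambda_k^2) \approx n_k$, reflecting the push of the weighted majority vote toward the origin. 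Apply Lemma \ref{l:bigabs} to convert the inductive empirical bound on all $m$ rows into one on the top-$m_k$ rows $\cR_k$, with the loss factor $m/m_k$ absorbed into the drift-dominated second Gaussian. Then Lemma \ref{l:combine} convolves the inductive Gaussian (mean $-\mu_{k-1,2}$, variance $\sigma_{k-1,2}^2$) with the \ref{l:kstep} Gaussian, yielding a Gaussian of mean $-\mu_{k,2}$ and variance $\sigma_{k,2}^2$, which produces the second term in the bound. The multiplicative factor $(1+n^{-1/10})$ and additive error $n^{-1/10}$ come from each invocation of Lemma \ref{l:stat2sam} converting statistical cdfs to sample cdfs; iterating $k$ times explains the $(1+n^{-1/10})^k$ and the $2\cdot 3^k$ prefactor (each combination via Lemma \ref{l:combine} also costs a bounded constant). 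Summing the failure probabilities from Lemmas \ref{l:sam2stat}, \ref{l:sam2stat2}, \ref{l:stat2sam} over the $k$ rounds yields the claimed $k\exp(-n^{1/75})$ failure budget.

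The hardest part will be the bookkeeping needed to apply Lemma \ref{l:combine}: we need pairwise independence of $S(0:k-1)|_{\cR_k}$ and $\sgn(S(0:k-1))\,S(k:k)|_{\cR_k}$ along with exchangeability of $\{S^{(r)}(k:k)\}_{r \in \cR_k}$. Independence of the column blocks $\cC_1, \ldots, \cC_k$ makes $S(k:k)$ independent of $S(0:k-1)$ once the set $\cR_k$ is fixed, but verifying that conditioning on this data-dependent set preserves the required exchangeability is delicate. A second subtlety is the behavior at the boundary $|S^{(r)}(0:k-1)| = T_{k-1}$ separating $\cR_k$ from $\cR_k^c$: Lemma \ref{l:bigabs} must be applied with care so that the top-$m_k$ restriction of the inductive Gaussian is bounded uniformly by the two Gaussians in the statement without double-counting the tail near $T_{k-1}$, which is exactly the regime where the first and second terms of the bound meet.
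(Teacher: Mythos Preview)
Your proposal is correct and follows essentially the same route as the paper: induction with base case Lemma~\ref{l:step1p2}, then for the inductive step the split $\cR_k$ versus $\cR_k^c$, using Lemmas~\ref{l:bigabs}, \ref{l:sam2stat}/\ref{l:sam2stat2}, \ref{l:kstep}, \ref{l:combine}, and \ref{l:stat2sam} on $\cR_k$, and Lemma~\ref{l:kstep2} on $\cR_k^c$. One small imprecision: the claim ``the definition of $\cR_k$ gives $|S^{(r)}(0:k-1)|\le T_{k-1}$ for $r\in\cR_k^c$'' is not a definition --- $\cR_k$ is the set of $m_k$ rows with largest $|S^{(r)}(0:k-1)|$, and the bound $T_{k-1}$ for the complement follows only after invoking the inductive hypothesis together with the numerical checks of Lemmas~\ref{l:induct2} and~\ref{l:induct3} showing that the right-hand side of~\eqref{eq:inductbound} at $\eta=T_{k-1}$ is at most $m_k/m$; the paper makes this step explicit.
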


\subsection{Bounds on the last round in the SBP model}
In this section, we prove Lemma \ref{l:connect} provided all the lemmas in the previous section.
\begin{lem}\label{l:last1}
For each $r\in [m]$ and every $\veps\in (0,1/6)$, for $n$ large enough,
\begin{align*}
    \bP(|S^{(r)}(0:R-1)|\geq n^{1/2+\veps})\leq \exp(-n^{\veps}).
\end{align*}
\end{lem}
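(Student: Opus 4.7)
The plan is to decompose $S^{(r)}(0:R-1) = \sum_{i=0}^{R-1} S^{(r)}(i:i)$ and bound each summand using a round-by-round concentration argument, then take a union bound over the $R = O(\sqrt{\log n})$ rounds. The goal is to show each $|S^{(r)}(i:i)|$ is at most $O(\sqrt{n}) + O(\sqrt{n_i\log(R/\delta)})$ with failure probability at most $\delta/R$, and then verify that the sum is $o(n^{1/2+\veps})$ when we set $\delta = \exp(-n^{\veps})$.

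For round $0$, the coordinates $X_j$ with $j\in \cC_0$ are assigned independently of $G$, so $S^{(r)}(0:0)$ is a sum of $n_0$ independent Rademacher variables and Hoeffding's inequality gives $|S^{(r)}(0:0)| \leq \sqrt{2n_0 n^{\veps}}$ with probability at least $1-2\exp(-n^{\veps})$. For each round $i\geq 1$, I split into two cases. If $r\notin \cR_i$, then the weighted majority decisions $\{X_j\}_{j\in \cC_i}$ depend only on the entries of $G$ indexed by rows in $\cR_i$ and past column blocks, and are therefore independent of $\{G_{r,j}\}_{j\in \cC_i}$; conditionally, the terms $G_{r,j}X_j$ are iid Rademacher and Hoeffding yields $|S^{(r)}(i:i)| = O(\sqrt{n_i\log(R/\delta)})$ with probability at least $1-\delta/R$. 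If $r\in \cR_i$, the Bernoulli coupling of Lemma \ref{l:ulbound} (applied column-wise with $t=m_i$) sandwiches $\sgn(S^{(r)}(0:i-1))\cdot S^{(r)}(i:i)$ between iid sums of $\{\pm 1\}$-valued Bernoulli variables whose means have magnitude between $\lambda_i$ and $\bar\lambda_i$. Applying Hoeffding's inequality to the coupled iid sums yields $|S^{(r)}(i:i)| \leq n_i \bar\lambda_i + O(\sqrt{n_i\log(R/\delta)})$. By the parameter choices $n_i = \Theta(\sqrt{m_in})$, we have $n_i\lambda_i = O(\sqrt{n})$ uniformly in $i$.

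Summing across all rounds with $\delta = \exp(-n^{\veps})$ and using $R = O(\sqrt{\log n})$ together with Cauchy--Schwarz in the form $\sum_i \sqrt{n_i} \leq \sqrt{R\sum_i n_i} = O(\sqrt{n\log n})$, we conclude
\[
|S^{(r)}(0:R-1)| \leq \sum_{i=0}^{R-1}|S^{(r)}(i:i)| \leq O(R\sqrt{n}) + O\bigl(\sqrt{n\log n \cdot n^{\veps}}\bigr) = O\bigl(n^{1/2+\veps/2}\sqrt{\log n}\bigr),
\]
which is below $n^{1/2+\veps}$ for $n$ large enough. A union bound over the $R+1$ rounds controls the total failure probability by $O(R)\cdot \delta \leq \exp(-n^{\veps})$ after a harmless tightening of constants.

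The principal difficulty is handling the dependencies in the rounds with $r\in \cR_i$: the assignments $X_j$ for $j\in \cC_i$ are explicit functions of the entries $G_{r,j}$ in the same row and column block, so the terms $G_{r,j}X_j$ are neither independent nor mean-zero. The Bernoulli coupling of Lemma \ref{l:ulbound} is precisely the tool that replaces these dependent summands by iid quantities with a controlled bias of order $\lambda_i$; summed over the $n_i$ columns this bias gives the $O(\sqrt{n})$ ``push'' magnitude, which is in turn easily absorbed by the generous margin $n^{1/2+\veps}\gg R\sqrt{n}$ available in the bound.
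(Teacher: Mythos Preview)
Your proposal is correct and follows essentially the same approach as the paper: both rely on the Bernoulli coupling of Lemma~\ref{l:ulbound} to control the bias from the majority votes by $n_k\bar\lambda_k=O(\sqrt{n})$ per round, and both finish with a Gaussian-type tail bound using that the total bias $\sum_k n_k\bar\lambda_k = O(R\sqrt{n})\ll n^{1/2+\veps}$. The paper is slightly more economical---it couples the entire sum $S^{(r)}(0:R-1)$ at once with a single sum of independent Bernoullis and applies one tail bound (Lemma~\ref{l:binomial0}), whereas you bound each round separately with Hoeffding and take a union bound; your explicit case split $r\in\cR_i$ versus $r\notin\cR_i$ is absorbed in the paper's version since the bias bound $\bar\lambda_k\geq 0$ covers both cases uniformly.
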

\begin{proof}
Note that by Lemma \ref{l:ulbound}, for each $r\in [m]$, $S^{(r)}(0:R-1)$ can be bounded from below and above by a sum of independent Bernoulli random variables,
\begin{align*}
    \sum_{k=0}^{R-1}\sum_{j=1}^{n_k}  B_{k,j}\leq S^{(r)}(0:R-1)\leq \sum_{k=0}^{R-1} \sum_{j=1}^{n_k} \bar B_{k,j},
\end{align*}
where $\bar B_{k,j}\sim \mathrm{Bern}((1+\bar \lambda_k)/2)$ and $B_{k,j}\sim \mathrm{Bern}((1-\bar \lambda_k)/2)$. Here we write $\bar \lambda_0=0$. As the mean
\begin{align*}
    \sum_{k=0}^{R-1} n_k\bar \lambda_k\leq (2C_\kappa+R\kappa)\sqrt{n} \ll n^{1/2+\veps},
\end{align*}
by Lemma \ref{l:binomial0}, we have the lemma.
\end{proof}

Next we look at the general row sums $S(v,h_0,h_1,\cdots,h_{R_1})$. 
\begin{lem}\label{l:last2}
For any $\vec 0\leq \vec h \leq \vec D$, and for each $r\in [m]$ and every $\veps\in (0,1/6)$, for $n$ large enough,
\begin{align*}
    \bP(|S^{(r)}(v,h_0,h_1,\cdots,h_{R-1})|\geq n^{1/2+\veps})\leq \exp(-n^{\veps}).
\end{align*}
\end{lem}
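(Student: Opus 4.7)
I would adapt the proof of Lemma~\ref{l:last1} by writing each block $u_k := T(v,h_0,\ldots,h_{R-1})(k{:}k) \in \{\pm1\}^{n_k}$ of the interpolation vector as an algorithm output plus a sparse perturbation. For $k = 0$, $u_0 = \cI(v,-v,h_0)$ is a fixed $\pm 1$ vector independent of $G$, so a direct Hoeffding bound gives $|G^{(r)}(0{:}0)\, u_0| \leq n^{1/2+\veps/2}$ with probability at least $1 - 2\exp(-n^{\veps}/2)$. For $1 \leq k \leq R-1$, set $A_k := \cA_k(T(v,h_0,\ldots,h_{k-1}))$ and $B_k := \cA_k(T^{+1}(v,h_0,\ldots,h_{k-1}))$, so $u_k = \cI(A_k,B_k,h_k)$. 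Letting $F_k := H(A_k,B_k,h_k)$ (a set of size at most $D_k \leq n_k$) and noting that $B_k = -A_k$ on $H(A_k,B_k)$, we get the decomposition
\begin{align*}
G^{(r)}(k{:}k)\, u_k \;=\; G^{(r)}(k{:}k)\, A_k \;-\; 2\sum_{j \in F_k} G_{r,j}\, A_{k,j}.
\end{align*}

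\textbf{Leading term.} The proof of Lemma~\ref{l:last1} relies only on the Bernoulli domination from Lemma~\ref{l:ulbound}, which applies to any majority-vote output and does not depend on the particular starting vector. Since each $A_k$ is the majority-vote output of round $k$ applied to the input $T(v,h_0,\ldots,h_{k-1})$, the same argument yields $|G^{(r)}(k{:}k)\, A_k| = O(\sqrt{n})$ per round with probability $1 - \exp(-n^{\veps})$, and the triangle inequality over the $R = O(\sqrt{\log n})$ rounds gives $\bigl|\sum_k G^{(r)}(k{:}k)\, A_k\bigr| \leq n^{1/2+\veps/2}$.

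\textbf{Correction term.} For the correction, I condition on $G$ outside $G_{r,\cC_k}$, so that $\cR_k^A, \cR_k^B$, the weights $w_{r'}^{A}, w_{r'}^{B}$, and the partial sums $Y_j^A := \sum_{r' \in \cR_k^A \setminus \{r\}} w_{r'}^{A} G_{r',j}$ are all determined. Writing $A_{k,j} = \sgn(w_r^{(k)} G_{r,j} + Y_j^A)$, one checks that for the typical columns where $|Y_j^A| > 1$ we have $w_r^{(k)} G_{r,j} A_{k,j} = \sgn(Y_j^A)$, a quantity independent of $G_{r,j}$. Lemma~\ref{l:ulbound} then dominates $w_r^{(k)} G_{r,j} A_{k,j}$ by a $\bar\lambda_k$-Bernoulli, independently across $j$. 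The set $F_k$ depends on the magnitudes $\{|Y_j^A|, |Y_j^B|\}$ and on the ordering of flip positions, but crucially not on the signs $\sgn(Y_j^A)$; by a McDiarmid-type bounded-differences estimate, each flip of a single $G_{r,j}$ changes the sum $\sum_{j' \in F_k} G_{r,j'} A_{k,j'}$ by $O(1)$, so $\bigl|\sum_{j \in F_k} G_{r,j} A_{k,j}\bigr| = O(\sqrt{n_k \log n})$ with probability $1 - \exp(-n^{\veps})$. Summing over $R$ rounds, the total correction is $O(n^{1/2+o(1)}) \ll n^{1/2+\veps}$.

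\textbf{Main obstacle.} The delicate point is the self-referential nature of the correction sum: $F_k$ depends on the very variables $\{G_{r,j}\}_{j \in \cC_k}$ that also enter each $A_{k,j}$. Making the bounded-differences argument rigorous requires showing that a single coordinate flip in row $r$ changes only $O(1)$ entries of $F_k$ and of $\{A_{k,j}, B_{k,j}\}$; this reduces to the density of the row sums near the thresholds $T_{k-1}$. The round $k = 1$ case for SBP (where $T_0 = 0$) needs an additional argument to handle potential sign flips of $\sgn(S^{(r)}(0{:}0))$, which occur for at most an $O(1/\sqrt{n_0})$-fraction of rows and are absorbed into the per-round $O(\sqrt{n})$ bound. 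Similarly, the rare columns with $|Y_j^A| \leq 1$ contribute at most $O(n_k/\sqrt{m_k}) = O(\sqrt{n})$ per round, which is negligible.
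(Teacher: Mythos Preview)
Your route is considerably more elaborate than the paper's. The paper's proof is the same five lines as for Lemma~\ref{l:last1}: it asserts the stochastic sandwich
\[
\sum_{k=0}^{R-1}\sum_{j=1}^{n_k} B_{k,j}\ \le\ S^{(r)}(v,h_0,\ldots,h_{R-1})\ \le\ \sum_{k=0}^{R-1}\sum_{j=1}^{n_k}\bar B_{k,j},
\]
with independent $B_{k,j}\sim\mathrm{Bern}((1-\bar\lambda_k)/2)$, $\bar B_{k,j}\sim\mathrm{Bern}((1+\bar\lambda_k)/2)$, and then invokes Lemma~\ref{l:binomial0} exactly as before, using $\sum_k n_k\bar\lambda_k=O(R\sqrt n)\ll n^{1/2+\veps}$. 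The only new observation is that for every $j\in\cC_k$ the interpolated entry $u_{k,j}$ equals either $A_{k,j}=\cA_k(T(v,h_0,\ldots,h_{k-1}))_j$ or $B_{k,j}=\cA_k(T^{+1}(v,h_0,\ldots,h_{k-1}))_j$, and \emph{both} of these are majority-vote outputs over $m_k$ voters; hence Lemma~\ref{l:ulbound} supplies the same per-coordinate Bernoulli domination whichever one is selected. No decomposition, no correction term, no bounded-differences argument.

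Your splitting into a leading term $\sum_k G^{(r)}(k{:}k)A_k$ plus a correction $-2\sum_{j\in F_k}G_{r,j}A_{k,j}$ is a genuinely different route. It is not wrong, but it manufactures difficulties: the self-referential dependence of $F_k$ on the row-$r$ entries, the sign-flip issue in round~1, the density of row sums near the thresholds---all of these are real complications of \emph{your} decomposition, not of the problem. The paper never isolates $F_k$ and so never faces any of them. In fact your leading-term bound already contains the whole idea: you correctly note that Lemma~\ref{l:ulbound} applies to any majority-vote output regardless of the starting vector. The point you are missing is that this applies equally to $B_{k,j}$, so it applies to $u_{k,j}$ itself, and the correction term is superfluous. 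If you want to be fully careful about the fact that the choice between $A_{k,j}$ and $B_{k,j}$ depends on other columns, you can dominate $G_{r,j}u_{k,j}$ by $\max(G_{r,j}A_{k,j},G_{r,j}B_{k,j})$, which depends only on column $j$ and hence is genuinely i.i.d.\ across $j$; the extra bias this introduces is harmless for the crude $n^{1/2+\veps}$ target.
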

\begin{proof}
The proof is similar to Lemma \ref{l:last1}. From construction, we have for any $r\in [m]$,
\begin{align*}
    \sum_{k=0}^{R-1}\sum_{j=1}^{n_k}  B_{k,j}\leq S^{(r)}(v,h_0,h_1,\cdots,h_{R-1})\leq \sum_{k=0}^{R-1} \sum_{j=1}^{n_k} \bar B_{k,j},
\end{align*}
where $\bar B_{k,j}\sim \mathrm{Bern}((1+\bar \lambda_k)/2)$ and $B_{k,j}\sim \mathrm{Bern}((1-\bar \lambda_k)/2)$ and $\bar \lambda_0=0$. As the mean
\begin{align*}
    \sum_{k=0}^{R-1} n_k\bar \lambda_k\leq (2C_\kappa+R\kappa)\sqrt{n} \ll n^{1/2+\veps},
\end{align*}
by Lemma \ref{l:binomial0}, we have the lemma.
\end{proof}
Therefore, by a union bound, we have
\begin{align}\label{eq:2big}
    \bP(|S^{(r)}(v,h_0,h_1,\cdots,h_{R-1}))|\geq n^{0.501}, \forall r\in [m])\leq m\exp(-n^{0.001}).
\end{align}
We use $\cR_R$ to denote the set of rows involved in computation for $S^{(r)}(v,h_0,h_1,\cdots,h_{R-1})$ in round $R$, and $\cR_R'$ for $S^{(r)}(v,h_0,h_1,\cdots,h_{R-1})^{+1}$. Then for $r\in \cR_R$, by definition we can write
\begin{align*}
    S^{(r)}(h_0,h_1,\cdots,h_{R-1},0)(R:R)= &- \sgn(S^{(r)}(h_0,h_1,\cdots,h_{R-1})) \sum_{j=1}^{\ell_r} \cW_{r,j} \sgn(\sum_{r\in \cR_R} \cW_{r,j}) \\
    &+ \sum_{j=\ell_r+1}^{n_R} G_{r,j}X_j,
\end{align*}
and similarly for $r\in \cR_R'$,
\begin{align*}
    S^{(r)}(h_0,h_1,\cdots,h_{R-1},D_R)(R:R)= & - \sgn(S^{(r)}(h_0,h_1,\cdots,h_{R-1})^{+1}) \sum_{j=1}^{\ell'_r} \cW'_{r,j} \sgn(\sum_{r\in \cR'_R} \cW'_{r,j}) \\
    &+ \sum_{j=\ell'_r+1}^{n_R} G_{r,j}X_j.
\end{align*}
Note that from construction, $S^{(r)}(h_0,h_1,\cdots,h_{R-1})$ and $S^{(r)}(h_0,h_1,\cdots,h_{R-1})^{+1}$ differ by $2$. If they have the same sign, we use $\sgn(S^{(r)}(0:R-1))$ to denote it. Then for any $r\in \cR_R\cap \cR_R'$, we have for any $0\leq h_R\leq D_R$,
\begin{align*}
    S^{(r)}(h_0,h_1,\cdots,h_{R-1},h_R)(R:R)= &- \sgn(S^{(r)}(0:R-1)) \sum_{j=1}^{\ell''_r} \cW''_{r,j} \sgn(\sum_{r\in \cR''_R} \cW''_{r,j}) \\
    &+ \sum_{j=\ell''_r+1}^{n_R} G_{r,j}X_j,
\end{align*}
where $\ell_r''$ is an integer between $\ell_r$ and $\ell'_r$, and each term $\cW''_{r,j} \sgn(\sum_{r\in \cR''_R} \cW''_{r,j})$ either equals $\cW_{r,j} \sgn(\sum_{r\in \cR_R} \cW_{r,j})$ or $\cW'_{r,j} \sgn(\sum_{r\in \cR'_R} \cW'_{r,j})$. We write $\vec{h}=(h_0,\cdots,h_R)$ and write $S^{(r)}(\vec h)=S^{(r)}(v,h_0,h_1,\cdots,h_{R-1},h_R)$ for short. We write $\bP_0$ as the probability with respect to the randomness in $G(R:R)$ only. By Lemma \ref{l:binomial0}, 
\begin{align*}
    &\bP_0(|S^{(r)}(\vec h)(0:R-1)+S^{(r)}(\vec h)(R:R)|\geq \kappa \sqrt{n})\\
    \leq &\bP_0(S^{(r)}(\vec h)(0:R-1)+S^{(r)}(\vec h)(R:R)\geq \kappa \sqrt{n})+\bP(S^{(r)}(\vec h)(0:R-1)+S^{(r)}(\vec h)(R:R)\leq -\kappa \sqrt{n})\\
    \leq &2\exp(-m^{1/6})+\frac{7}{3}\Psi\left( \frac{\kappa \sqrt{n}-|S^{(r)}(\vec h)(0:R-1)|+ \ell''_r \lambda_R}{ \sqrt{n_R} }\right)\\
    &\qquad\qquad\qquad+\frac{7}{3}\Psi\left( \frac{\kappa \sqrt{n}+|S^{(r)}(\vec h)(0:R-1)|- \ell''_r \bar \lambda_R}{ \sqrt{n_R} }\right).
\end{align*}
Note that by construction, 
\begin{align*}
    \ell_r\leq |S^{(r)}(h_0,\cdots,h_{R-1})(0:R-1)|\sqrt{\pi m_R /2} \leq \ell_r+1,\\
    \ell'_r\leq |S^{(r)}(h_0,\cdots,h_{R-1})^{+1}(0:R-1)|\sqrt{\pi m_R /2} \leq \ell'_r+1.
\end{align*}
This implies that $\ell_r$ and $\ell_r'$ can only differ by at most one. In the event that $|S^{(r)}(\vec h)(0:R-1)|< n^{0.501}, \forall r\in [m]$, we have $n_R\gg \ell_r''$. Therefore, together with equation \eqref{eq:2big}, we have that
\begin{align*}
    \bP(|S^{(r)}(\vec h)(0:R-1)+S^{(r)}(\vec h)(R:R)|\geq \kappa \sqrt{n}) \leq \exp(-n^{0.0005}).
\end{align*}
Now, if row $r\in \cR_R\backslash \cR_r'$ or $r\in \cR_R'\backslash \cR_r$, then the row sum in round $R$ consists partly of majority votes and the rest of Bernoulli random variables. Similarly, we can have that
\begin{align*}
    &\bP_0(|S^{(r)}(\vec h)(0:R-1)+S^{(r)}(\vec h)(R:R)|\geq \kappa \sqrt{n})\\
    \leq &2\exp(-m^{1/6})+\frac{7}{3}\Psi\left( \frac{\kappa \sqrt{n}-|S^{(r)}(\vec h)(0:R-1)|+ \ell''_r \lambda_R}{ \sqrt{n_R} }\right)\\
    &\qquad\qquad\qquad+\frac{7}{3}\Psi\left( \frac{\kappa \sqrt{n}+|S^{(r)}(\vec h)(0:R-1)|- \ell''_r \bar \lambda_R}{ \sqrt{n_R} }\right),
\end{align*}
where $\ell_r''$ here is the number of majority votes. Note that in particular in this case, by Lemma \ref{l:together2},
\begin{align*}
    |S^{(r)}(\vec h)(0:R-1)|\leq T_{R-1}+2.
\end{align*}
As $n_R\ll \kappa \sqrt{n}-T_{R-1}$, we also have that
\begin{align*}
    \bP(|S^{(r)}(\vec h)(0:R-1)+S^{(r)}(\vec h)(R:R)|\geq \kappa \sqrt{n}) \leq \exp(-n^{0.0005}).
\end{align*}
For row $r\in (\cR_R\cup\cR_R')$ where $S^{(r)}(h_0,h_1,\cdots,h_{R-1})$ and $S^{(r)}(h_0,h_1,\cdots,h_{R-1})^{+1}$ have a different sign, it is easy to see that $|S^{(r)}(h_0,h_1,\cdots,h_{R-1})|=1$. Then the argument is much simpler and we directly have that 
\begin{align*}
    \bP(|S^{(r)}(\vec h)(0:R-1)+S^{(r)}(\vec h)(R:R)|\geq \kappa \sqrt{n}) \leq \exp(-n^{0.0005}).
\end{align*}
For rows $r\in (\cR_R\cup \cR_R')^c$, the row sum in round $R$ only consists of sums of Bernoulli random variables. Therefore, by Lemma \ref{l:together2}, as $n_R\ll \sqrt{n}-T_{R-1}$, we also have that 
\begin{align*}
    \bP(|S^{(r)}(\vec h)(0:R-1)+S^{(r)}(\vec h)(R:R)|\geq \kappa \sqrt{n}) \leq \exp(-n^{0.0005}).
\end{align*}
By a union bound over all rows $r\in [m]$, this completes the proof of Lemma \ref{l:connect}, and thus proves Theorem \ref{t:octupus} and Theorem \ref{t:algo} in the SBP model.  

\subsection{Inductive bound on distribution in ABP model}
In this section, we consider the ABP model. We adapt the same notations as in the previous sections. The following lemma will be important in proving Lemma \ref{l:connect}.
\begin{lem}\label{l:together3}
For any $0\leq k \leq R-1$, $\eta\leq T_k$, and for any $\vec 0\leq \vec h \leq \vec D$,
\begin{align}\label{eq:inductbound2}
    \bP_2[S(h_0,h_1,\cdots,h_k)\leq \eta] \leq 3^k\left( n^{-1/10} +\Psi \left( \frac{-\eta+\mu_{k,1}}{\sigma_{k,1}}\right)+(3/2)(1+n^{-1/10})^k \Psi \left( \frac{-\eta+\mu_{k,2}}{\sigma_{k,2}}\right)\right),
\end{align}
with probability larger than $1-\exp(-n^{1/80})$. Here, 
\begin{align*}
    \mu_{k,1}=T_{k-1}, \quad \sigma_{k,1}^2=n_k, \quad \mu_{k,2}=\sum_{i=1}^k n_i\sqrt{2/\pi m_i}, \quad \sigma_{k,2}^2=\sum_{i=0}^k n_i.
\end{align*}
Furthermore, the right hand side is bounded from above by $m_{k+1}/m$ for any $1\leq k\leq R-1$.
\end{lem}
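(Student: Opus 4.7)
The plan is to prove Lemma \ref{l:together3} by induction on $k$, mirroring the SBP argument that yields Lemma \ref{l:together}, but in a simpler one-sided form. In ABP a solution only requires $S(0:R) \ge \kappa\sqrt{n}$, so the whole analysis can be carried out on the lower tail of $S(v,h_0,\ldots,h_k)$ rather than on $|S|$; this is what accounts for the prefactor $3^k$ (instead of $2\cdot 3^k$) and for the sign of $\mu_{k,1}=T_{k-1}$ flipping relative to the SBP statement. The per-round ABP algorithm $X_j = \sgn\bigl(\sum_{r\in \cR_k} G_{r,j}\bigr)$ is also simpler than its SBP analog because it needs no sign twist by $\sgn(S^{(r)}(0:k-1))$, which removes a complication but does not change the overall structure of the proof.

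\textbf{Base case and inductive step.}
For $k=0$, $T(v,h_0)=\cI(v,-v,h_0)$ and $S(0:0)$ is a sum of independent Rademachers, so the bound follows from Lemma \ref{l:binomial0}. For the inductive step, fix $\vec h$ and partition the rows into $\cR_k$ (the $m_k$ rows with smallest $S^{(r)}(0:k-1)$) and its complement. For $r\in\cR_k$, writing
$$S^{(r)}(k:k)=\sum_{j\in\cC_k} G_{r,j}\,\sgn\Bigl(\sum_{r'\in\cR_k} G_{r',j}\Bigr),$$
Lemma \ref{l:ulbound} sandwiches each summand between i.i.d.\ Bernoullis with biases $\lambda_k$ and $\bar\lambda_k$, and Lemma \ref{l:binomial0} then lower-bounds $S^{(r)}(k:k)$ by a Gaussian centered at $n_k\sqrt{2/(\pi m_k)}$ with variance $n_k$, producing the $\mu_{k,2}$-term. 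For $r\notin\cR_k$, $S^{(r)}(k:k)$ is a plain Rademacher sum with mean $0$ and variance $n_k$, giving the $\mu_{k,1}$-contribution directly from Lemma \ref{l:binomial0}. The interpolation vector $T(v,\vec h)(k:k)$ agrees with one of $\cA_k(T(v,h_0,\ldots,h_{k-1}))$ or $\cA_k(T^{+1}(\cdots))$ in all but at most $n_k$ coordinates, and on each flipped coordinate the entries $G_{r,j}$ are independent of the majority signs used elsewhere, so the same row-level bounds apply verbatim for every $\vec h$ in the tree.

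\textbf{From rows to the empirical cdf and combining rounds.}
I then lift the row-level bounds to the empirical distribution $\bP_2[S(k:k\mid\cR_k)\le\eta]$ via an exchangeability argument identical to Lemma \ref{l:roundisum} together with Lemma \ref{l:KRexchange}; the corresponding bound on $\cR_k^c$ comes from applying Lemma \ref{l:binomial0} row by row. Lemmas \ref{l:sam2stat2} and \ref{l:stat2sam} produce i.i.d.\ couplings to extended random variables with explicit cdf, exactly as in Lemma \ref{l:kstep}. Finally, Lemma \ref{l:combine} convolves the inductive distribution of $S(0:k-1)$ with the round-$k$ contribution and delivers the two $\Psi$-terms of \eqref{eq:inductbound2}; the factor $3^k$ and the $(1+n^{-1/10})^k$ correction accumulate naturally as we iterate this combining lemma through the $k$ rounds, and Lemma \ref{l:bigabs} handles the passage between the empirical cdf restricted to $\cR_k$ and the global empirical cdf over all $m$ rows.

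\textbf{Main obstacles.}
The hard part, as in SBP, is the quantitative verification that the ABP parameter choices $m_k=2\lfloor\Psi(k+5)m/2\rfloor+1$, $n_k=\lfloor\sqrt{2\pi m_k n}\rfloor$, and $T_k=(\kappa+\veps_0+1/2^k)\sqrt{n}$ are tight enough so that the right-hand side of \eqref{eq:inductbound2} evaluated at $\eta = T_k$ is bounded by $m_{k+1}/m=\Psi(k+6)(1+o(1))$: this is what closes the induction and is where all the parameter choices have to be balanced. A secondary issue is the uniformity in $\vec h$, but since the tree has at most $(n+1)^{R+1}$ nodes and $R=O(\sqrt{\log n})$, a union bound against the per-$\vec h$ failure probability $\exp(-n^{1/75})$ leaves plenty of room for the claimed $\exp(-n^{1/80})$ bound, and introduces no new difficulties beyond bookkeeping.
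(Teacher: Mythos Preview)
Your overall plan is the same as the paper's: induction on $k$, one-sided lower-tail control, per-row bounds from Lemmas \ref{l:ulbound} and \ref{l:binomial0}, lifting to the empirical cdf through Lemmas \ref{l:KRexchange}, \ref{l:sam2stat2}, \ref{l:stat2sam}, combining rounds via Lemma \ref{l:combine}, and the numerical check (the paper isolates this as Lemma \ref{l:induct4}) that the right-hand side at $\eta=T_k$ is at most $m_{k+1}/m$.

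The one genuine gap is your treatment of the interpolation. You assert that on a flipped coordinate ``the entries $G_{r,j}$ are independent of the majority signs used elsewhere, so the same row-level bounds apply verbatim.'' This is false as stated: the interpolated block is
\[
S^{(r)}(v,\vec h)(k:k)=\sum_{j\in\cC_{k,1}} G_{r,j}\,\sgn\!\Bigl(\sum_{r'\in\cR_k} G_{r',j}\Bigr)+\sum_{j\in\cC_{k,2}} G_{r,j}\,\sgn\!\Bigl(\sum_{r'\in\cR_k'} G_{r',j}\Bigr),
\]
where $\cR_k$ and $\cR_k'$ are the (in general different) row sets selected by the two adjacent tree vertices $T(v,h_0,\ldots,h_{k-1})$ and $T^{+1}(v,h_0,\ldots,h_{k-1})$. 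If $r\in\cR_k'$ then $G_{r,j}$ for $j\in\cC_{k,2}$ is one of the voters in the second majority and is not independent of that sign. The paper does not argue via independence; it splits rows into the four cases $r\in\cR_k\cap\cR_k'$, $r\in\cR_k\setminus\cR_k'$, $r\in\cR_k'\setminus\cR_k$, $r\in(\cR_k\cup\cR_k')^c$. In the first case both pieces inherit the majority bias and one builds two i.i.d.\ lower bounds $\xi_i,\xi_i'$ and sums them with the inductive $W_i$; in the last case both pieces are plain Rademacher sums and Lemma \ref{l:induct4} gives $S^{(r)}(0:k-1)\ge T_{k-1}-2$ directly; in the two mixed cases one piece is biased, the other Rademacher, and since the two parent inner products differ by at most $2$, the row still satisfies $S^{(r)}(0:k-1)\ge T_{k-1}-2$. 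This four-way split is what actually makes the bound go through uniformly in $\vec h$, and it is the piece you need to supply.
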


With Lemma \ref{l:together3}, we prove Lemma \ref{l:connect} in the rest of the subsection.
\begin{lem}
For each $r\in [m]$ and every $\veps\in (0,1/6)$, for $n$ large enough,
\begin{align*}
    \bP(S^{(r)}(0:R-1)\leq -n^{1/2+\veps})\leq \exp(-n^{\veps}).
\end{align*}
\end{lem}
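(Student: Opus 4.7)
The plan is to mirror the proof of Lemma \ref{l:last1} (which handled the SBP case) almost verbatim, but adapted to the ABP algorithm and specialized to the one-sided lower tail. In the ABP algorithm, round $k\geq 1$ sets $X_j=\sgn(\sum_{r'\in\cR_k}G_{r',j})$, so the row sum $S^{(r)}(0:R-1)=\sum_{k=0}^{R-1}S^{(r)}(k:k)$ decomposes naturally into contributions of two kinds: rounds in which $r\in\cR_k$ (where the summand is biased positively) and rounds where $r\notin\cR_k$ or $k=0$ (where the summand is a mean-zero Rademacher sum because $G_{r,j}$ is independent of $X_j$).

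The first step is to invoke Lemma \ref{l:ulbound} on each round. For $r\in\cR_k$, the lemma couples the term $G_{r,j}\sgn(\sum_{r'\in\cR_k}G_{r',j})$ from below by an i.i.d.\ $\pm 1$-valued $B_{k,j}$ with $\mathbb{E}B_{k,j}=\lambda_k=(1-m_k^{-1/8})\sqrt{2/(\pi m_k)}$; for $r\notin\cR_k$ (or $k=0$) we simply take $B_{k,j}$ to be Rademacher with mean zero. This yields the stochastic lower bound
\begin{align*}
    S^{(r)}(0:R-1) \;\geq\; \sum_{k=0}^{R-1}\sum_{j=1}^{n_k} B_{k,j},
\end{align*}
with the $B_{k,j}$ independent across all $(k,j)$. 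The second step is to bound the mean of this Bernoulli sum. Using the ABP parameter choices one verifies $n_1\lambda_1\leq C_\kappa\sqrt{n}$ and $n_k\lambda_k\leq (1+o(1))\cdot 2\sqrt{n}$ for $2\leq k\leq R-1$, so the total mean is at most $(C_\kappa+2R)\sqrt{n}$. Since $R=O(\sqrt{\log n})$ by construction, this mean is $O(\sqrt{n\log n})$, which is $o(n^{1/2+\veps})$ for any fixed $\veps>0$.

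The third step is a direct appeal to Lemma \ref{l:binomial0}: the variance of $\sum_{k,j}B_{k,j}$ is bounded by $n$, and the deviation from the mean required for $S^{(r)}(0:R-1)\leq -n^{1/2+\veps}$ is of order $n^{1/2+\veps}$, i.e. at least $n^{\veps/2}$ standard deviations once $n$ is large. The Gaussian-type tail in Lemma \ref{l:binomial0} then gives probability $\exp(-m^{1/6})+(1+o(1))\Psi(n^{\veps/2})\leq \exp(-n^{\veps})$, as required.

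The argument is essentially bookkeeping; the only point to watch is that the algorithm's bias on $S^{(r)}(k:k)$ is in the \emph{positive} direction whenever $r\in\cR_k$, so the positive drift only helps the lower-tail estimate rather than hurting it — contrary to SBP, we do not need the upper Lemma \ref{l:ulbound} bound here. Consequently no new ideas beyond those of Lemma \ref{l:last1} are required, and the statement follows by essentially the identical two-line computation, specialized to the lower tail and with the ABP values of $n_i$, $m_i$, $R$, and $\lambda_i$ substituted in.
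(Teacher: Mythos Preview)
Your approach is correct and follows the same line as the paper: lower-bound $S^{(r)}(0:R-1)$ by an independent $\pm 1$ sum and apply Lemma~\ref{l:binomial0}. The paper's proof is even shorter: since the bias from rounds with $r\in\cR_k$ is \emph{positive} and only helps the lower tail, it simply bounds below by a sum of i.i.d.\ $\mathrm{Bern}(1/2)$ variables (mean zero), bypassing both Lemma~\ref{l:ulbound} and your mean computation entirely. One arithmetic slip to fix: the number of standard deviations is $n^{1/2+\veps}/\sqrt{n}=n^{\veps}$, not $n^{\veps/2}$; with $n^{\veps/2}$ the claimed inequality $\Psi(n^{\veps/2})\leq\exp(-n^{\veps})$ would fail, whereas with the correct $n^{\veps}$ one gets $\Psi(n^{\veps})\leq\exp(-n^{2\veps}/2)\leq\exp(-n^{\veps})$ for large $n$.
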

\begin{proof}
Note that $S^{(r)}(0:R-1)$ can be bounded from below by a sum of iid Bernoulli random variables that follow the distribution $\mathrm{Bern}(1/2)$.  Therefore, the lemma holds by Lemma \ref{l:binomial0}.
\end{proof}
Next we look at the general row sums $S(v,h_0,h_1,\cdots,h_{R-1})$. 
\begin{lem}
For any $\vec 0\leq \vec h \leq \vec D$, and for each $r\in [m]$ and every $\veps\in (0,1/6)$, for $n$ large enough,
\begin{align*}
    \bP(S^{(r)}(v,h_0,h_1,\cdots,h_{R-1})(0:R-1)\leq -n^{1/2+\veps})\leq \exp(-n^{\veps}).
\end{align*}
\end{lem}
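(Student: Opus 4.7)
The strategy mirrors the proof of Lemma \ref{l:last2} (the SBP analogue) and of the preceding ABP lemma on $S^{(r)}(0:R-1)$, but only the lower-tail estimate is needed. The plan is: (i) stochastically dominate the row sum from below by a sum of independent $\pm 1$-valued random variables, verifying that this coupling is uniform over all tree-interpolation realizations, and (ii) apply Lemma \ref{l:binomial0} (Bernoulli concentration).

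Concretely, I would decompose
\begin{align*}
S^{(r)}(v,h_0,\ldots,h_{R-1})(0:R-1) \;=\; \sum_{k=0}^{R-1}\sum_{j \in \cC_k} G_{r,j}\, T(v,\vec h)(j).
\end{align*}
In round $0$, $T(v,\vec h)(j)$ is chosen independently of $G$, so the product is Rademacher. For $1 \leq k \leq R-1$, by construction each entry $T(v,\vec h)(j)$ equals $\sgn\bigl(\sum_{r'\in\cR_k^*} G_{r',j}\bigr)$, where $\cR_k^*$ is either $\cR_k$ computed from $T(v,h_0,\ldots,h_{k-1})$ or from $T^{+1}(v,h_0,\ldots,h_{k-1})$, depending on which side of the interpolation the coordinate $j$ lies on. In either case, if $r \in \cR_k^*$, Lemma \ref{l:ulbound} supplies a $\mathrm{Bern}(\tfrac12(1+\lambda_k))$-shifted coupling $\psi_{j,r} \leq G_{r,j}\,T(v,\vec h)(j)$ with $\psi_{j,r}$ independent across $j$; if $r \notin \cR_k^*$, then $G_{r,j}$ is independent of the sign and the product is Rademacher, trivially bounded below by a mean-zero $\pm 1$ variable. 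This yields a dominating sum $\sum_{k,j} B_{k,j}$ of independent $\pm 1$-valued variables with $\E[B_{k,j}] \geq 0$.

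Since the total number of summands is at most $n$ and the mean of the dominating sum is $\geq 0 \gg -n^{1/2+\veps}$, Lemma \ref{l:binomial0} immediately gives
\begin{align*}
\bP\!\left(\sum_{k,j}B_{k,j}\leq -n^{1/2+\veps}\right)\leq \exp(-n^{\veps}),
\end{align*}
for $n$ large enough, which proves the claim.

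The main (but routine) technical point is verifying that the Lemma \ref{l:ulbound} coupling applies uniformly across the different possible $\cR_k^*$'s induced by the interpolation indices. The key observation is that the two candidate inputs to $\cA_k$ on a given tree edge differ in a single coordinate, so the two possible choices of $\cR_k^*$ differ in at most one row, and the coupling with parameter $\lambda_k$ still provides the same independent Bernoulli lower bound regardless of the specific $\cR_k^*$ chosen at coordinate $j$. With this in hand, there is no further interaction between the interpolation structure and the Bernoulli concentration step, and the proof concludes exactly as in Lemma \ref{l:last2}.
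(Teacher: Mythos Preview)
Your proposal is correct and follows essentially the same approach as the paper: stochastically dominate the row sum from below by a sum of independent $\pm1$ variables via Lemma~\ref{l:ulbound}, then apply Lemma~\ref{l:binomial0}. The paper's lower bound is actually slightly looser than yours---it takes one term per round with negative bias $-\bar\lambda_k$ and the rest Rademacher, giving total mean $-\sum_k\bar\lambda_k=O(R)=O(\sqrt{\log n})$---whereas you observe that in the ABP model every summand has nonnegative conditional mean; both are $\gg -n^{1/2+\varepsilon}$ and lead to the same conclusion. One small remark: your assertion that the two candidate sets $\cR_k^*$ ``differ in at most one row'' is not needed and is not obviously true (a single coordinate flip shifts every row sum by $\pm2$, which could swap several rows near the threshold); what actually makes the coupling work is that, conditionally on the columns with smaller index in $\cC_k$, the set $S_j\in\{\cR_k,\cR_k'\}$ used at coordinate $j$ is already determined, so $G_{\cdot,j}$ is fresh and the per-column Bernoulli lower bound applies independently.
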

\begin{proof}
The argument is very similar to the previous subsection. By Lemma \ref{l:ulbound}, we have for any $r\in [m]$,
\begin{align*}
    S^{(r)}(v,h_0,h_1,\cdots,h_{R-1})\geq \sum_{k=0}^{R-1} \sum_{j=1}^{n_k} B_{k,j},
\end{align*}
where for each $k$, $B_{k,1}\sim \mathrm{Bern}((1-\bar \lambda_k)/2)$ and for $j>1$, $B_{k,j}\sim \mathrm{Bern}(1/2)$ and $\bar \lambda_0=0$. As the mean
\begin{align*}
    \sum_{k=0}^{R-1} -\bar \lambda_k\gg -n^{1/2+\veps},
\end{align*}
by Lemma \ref{l:binomial0}, we have the lemma.
\end{proof}
Therefore, by a union bound, we have
\begin{align}\label{eq:2big2}
    \bP(S^{(r)}(0:R-1))\leq - n^{0.501}, \forall r\in [m])\leq m\exp(-n^{0.001}).
\end{align}
We adopt a similar set of definitions as in the SBP model. We use $\cR_R$ to denote the set of rows involved in computation for $S^{(r)}(v,h_0,h_1,\cdots,h_{R-1})$ in round $R$, and $\cR_R'$ for $S^{(r)}(v,h_0,h_1,\cdots,h_{R-1})^{+1}$.
By definition we can write
\begin{align*}
    &S^{(r)}(h_0,h_1,\cdots,h_{R-1},0)(R:R)=\sum_{j=n-n_R+1}^{n} G_{r,j} \sgn(\sum_{t\in \cR_R} G_{t,j}), \quad r\in \cR_R\\
    &S^{(r)}(h_0,h_1,\cdots,h_{R-1},D_R)(R:R)=\sum_{j=n-n_R+1}^{n} G_{r,j} \sgn(\sum_{t\in \cR'_R} G_{t,j}), \quad r\in \cR_R'.
\end{align*}
Then for any $0\leq h_R\leq D_R$, and any $r\in \cR_R \cap \cR_R'$, we have that 
\begin{align*}
    &S^{(r)}(h_0,h_1,\cdots,h_{R-1},h_R)(R:R)=\sum_{j=n-n_R+1}^{L(h_R)} G_{r,j} \sgn(\sum_{t\in \cR_R} G_{t,j})+\sum_{j=L(h_R)+1}^{n} G_{r,j} \sgn(\sum_{t\in \cR'_R} G_{t,j}),
\end{align*}
where $L(h_R)$ is between $n-n_R+1$ and $n$ such that $T(h_0,\cdots,h_R)$ agrees with $T(h_0,\cdots,D_R)$ in the first $L(h_R)$ entries. We write $\vec{h}=(h_0,\cdots,h_R)$ and write $S^{(r)}(\vec h)=S^{(r)}(h_0,h_1,\cdots,h_{R-1},h_R)$ for short. Further, we write $\bP_0$ as the probability with respect to the randomness in $G(R:R)$ only. Then we have,
\begin{align*}
    &\bP_0(S^{(r)}(\vec h)(0:R-1)+S^{(r)}(\vec h)(R:R)\leq \kappa \sqrt{n})\\
    \leq &\exp(-m^{1/6})+\frac{7}{6}\Psi\left( \frac{-\kappa \sqrt{n}+S^{(r)}(\vec h)(0:R-1)+ n_R \lambda_R}{ \sqrt{n_R} }\right).
\end{align*}
In the event that $S^{(r)}(\vec h)(0:R-1)>- n^{0.501}, \forall r\in [m]$, we have $n_R\lambda_R\gg n^{0.501}$. Therefore, together with equation \eqref{eq:2big2}, we have that
\begin{align*}
    \bP(S^{(r)}(\vec h)(0:R-1)+S^{(r)}(\vec h)(R:R)\leq \kappa \sqrt{n}) \leq \exp(-n^{0.0005}).
\end{align*}
Note that $S^{(r)}(h_0,\cdots,h_{R-1})$ and $S^{(r)}(h_0,\cdots,h_{R-1})^{+1}$ can only differ by two. By Lemma \ref{l:together3}, for row $r$ such that $r\in (\cR_R\cap \cR'_R)^c$, we have
\begin{align*}
    &\bP_0(S^{(r)}(\vec h)(0:R-1)+S^{(r)}(\vec h)(R:R)\leq \kappa \sqrt{n})\\
    \leq &\exp(-m^{1/6})+\frac{7}{6}\Psi\left( \frac{-\kappa \sqrt{n}+T_{R-1}-2}{ \sqrt{n_R} }\right).
\end{align*}
Note that we have $-\kappa \sqrt{n}+T_{R-1}\gg \sqrt{n_R}$. Therefore, we have that
\begin{align*}
    \bP(S^{(r)}(\vec h)(0:R-1)+S^{(r)}(\vec h)(R:R)\leq \kappa \sqrt{n}) \leq \exp(-n^{0.005}).
\end{align*}
By a union bound over all rows $r\in [m]$, this completes the proof of Lemma \ref{l:connect}, and thus proves Theorem \ref{t:octupus} and Theorem \ref{t:algo} in the ABP model.  

\subsection{Proofs of Lemmas}
\subsubsection{Proof of Lemma \ref{l:round1sum}}
This following lemma controls the behaviour of binomial distributions and mixed binomial distributions.
\begin{lem}\label{l:binomial}
For any $\varepsilon>0$. For any $p_1,p_2\in [1/10,9/10]$, and $m$ large enough, we have
\begin{align*}
    &\bP(B_1+B_2\leq \eta)\leq \exp(-m^{1/6})+(1+\varepsilon)\Psi\left(\frac{-\eta+m_1p_1+m_2p_2}{(m_1p_1(1-p_1)+m_2p_2(1-p_2))^{1/2}}\right),\\
    &\bP(B_1+B_2\geq \eta)\leq \exp(-m^{1/6})+(1+\varepsilon)\Psi\left(\frac{\eta-m_1p_1-m_2p_2}{(m_1p_1(1-p_1)+m_2p_2(1-p_2))^{1/2}}\right),
\end{align*}
where $B_1$ and $B_2$ are independent with $B_1\sim Bin(m_1,p_1)$, $B_2\sim Bin(m_2,p_2)$, $m_1+m_2=m$.
\end{lem}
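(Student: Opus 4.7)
The plan is to view $B := B_1 + B_2$ as a sum of $m = m_1 + m_2$ independent Bernoulli random variables, of which $m_1$ have parameter $p_1$ and $m_2$ have parameter $p_2$, all with parameters in $[1/10, 9/10]$. This puts the lemma in the same framework as Lemma \ref{l:binomial0}, whose proof (Lemma 5.4 in Kim--Roche) in fact works for any sum of independent Bernoullis with parameters bounded away from $0$ and $1$; the only substantive change from the single-binomial case will be in the expressions for the mean and variance.

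First I would collect the Gaussian parameters of the approximation: $\mu := m_1 p_1 + m_2 p_2$ and $\sigma^2 := m_1 p_1 (1-p_1) + m_2 p_2 (1-p_2)$. Because $p_i \in [1/10, 9/10]$, we have $\sigma^2 \geq 9m/100$, hence $\sigma = \Theta(\sqrt{m})$, and the sum of absolute third central moments of the summands is $O(m)$, so Berry--Esseen-type ratios are of order $m^{-1/2}$.

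Next I would split the analysis by $z := (\mu - \eta)/\sigma$. In the moderate-deviation regime $|z| \leq m^{1/10}$, a Cram\'er-type moderate deviation estimate for sums of bounded independent random variables yields
\[
\bP(B \leq \eta) = (1+o(1))\,\Psi\!\left(\frac{\mu-\eta}{\sigma}\right),
\]
since the relative correction is of order $z^{3}/\sqrt{m} \leq m^{-1/5} = o(1)$. Taking $m$ large enough makes the $o(1)$ smaller than the prescribed $\varepsilon$, and the claimed inequality follows (with the additive $\exp(-m^{1/6})$ term not needed in this range). In the complementary regime $|z| > m^{1/10}$, a Hoeffding (or Bernstein) bound gives
\[
\bP(B \leq \eta) \leq \exp(-c z^{2}) \leq \exp(-c m^{1/5}) \leq \exp(-m^{1/6})
\]
for $m$ sufficiently large, so the claim is absorbed into the additive term. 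The upper-tail statement $\bP(B \geq \eta)$ is obtained by applying the same argument to $m - B$, which is itself a sum of $m$ independent Bernoullis with parameters in $[1/10, 9/10]$.

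The main obstacle, such as it is, is essentially bookkeeping: one must check that the Cram\'er/Berry--Esseen estimate and the Hoeffding bound used in the proof of Lemma \ref{l:binomial0} apply equally well to a sum of Bernoullis with two distinct parameter values, and that the threshold $m^{1/10}$ is chosen so the Chernoff exponent $cm^{1/5}$ comfortably dominates $m^{1/6}$ while the Cram\'er relative correction $z^{3}/\sqrt{m}$ stays $o(1)$. Since both tools require only boundedness of the summands and parameters bounded away from $0$ and $1$, the extension from a single binomial to a sum of two comes essentially for free, which is why this lemma can be viewed as a notational generalization of Lemma \ref{l:binomial0}.
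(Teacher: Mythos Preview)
Your proposal is correct and is exactly the approach the paper takes: the paper's entire proof reads ``The proof is very similar to the proof of Lemma~\ref{l:binomial0},'' and you have spelled out precisely why---$B_1+B_2$ is a sum of $m$ independent Bernoullis with parameters in $[1/10,9/10]$, so the Kim--Roche argument (moderate-deviation estimate for $|z|$ small, Chernoff-type bound for $|z|$ large) goes through verbatim with the mean and variance replaced by $\mu=m_1p_1+m_2p_2$ and $\sigma^2=m_1p_1(1-p_1)+m_2p_2(1-p_2)$.
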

\begin{proof}
The proof is very similar to the proof of Lemma \ref{l:binomial0}.
\end{proof}
Next we prove Lemma \ref{l:round1sum}.
\begin{proof}[Proof of Lemma \ref{l:round1sum}]
Note that for each $r$,
\begin{align*}
    S^{(r)}(0:1)&= S^{(r)}(0:0)+S^{(r)}(1:1)\\
    &= S^{(r)}(0:0) - \sgn(S^{(r)}(0:0)) \sum_{j=1}^{\ell_r} W_{r,j} \sgn(\sum_{r\in [m]} W_{r,j}) + \sum_{j=\ell_r+1}^{n_1} G_{r,j}X_j.
\end{align*}
For any $\cB$ $b$-element subset of $\{1,\cdots, m\}$, for any $r\in \cB$, by Lemma \ref{l:ulbound}, we can define i.i.d. $\psi_{j,r}\sim \mathrm{Bern}(\frac{1}{2}(1+(1-m^{-1/8})\sqrt{2/(\pi m )}))$ and i.i.d. $\bar \psi_{j,r} \sim  \mathrm{Bern}(\frac{1}{2}(1+(1+m^{-1/8})\sqrt{2/(\pi m )}))$ such that $\psi_{j,r}\leq W_{r,j} \sgn(\sum_{r\in [m]} W_{r,j}) \leq \bar \psi_{j,r}$ and thus
\begin{align}\label{eq:srbound10}
    \sgn(S^{(r)}(0:0))S^{(r)}(0:1)\leq |S^{(r)}(0:0)| -  \sum_{j=1}^{\ell_r}  \psi_{j,r} + \sum_{j=\ell_r+1}^{n_1}  Q_{r,j},
\end{align}
where $Q_{r,j}$ are i.i.d. Rademacher random variables. And 
\begin{align}\label{eq:srbound20}
    \sgn(S^{(r)}(0:0))S^{(r)}(0:1)\geq |S^{(r)}(0:0)| - \sum_{j=1}^{\ell_r} \bar \psi_{j,r} + \sum_{j=\ell_r+1}^{n_1} \bar Q_{r,j},
\end{align}
where $\bar Q_{r,j}$ are i.i.d. Rademacher random variables. Note here if $\ell_r=n_1$, then we assume the third term in zero. We use $\bP_0$ to denote the probability density with respect to the randomness of $G(1:1)$ only. Then for fixed $S^{(r)}(0:0)$, when $\ell_r\leq n_1$, by Lemma \ref{l:binomial}, we have for any $\eta>0$,
\begin{align*}
    \bP_0(-  \sum_{j=1}^{\ell_r}  \psi_{j,r} + \sum_{j=\ell_r+1}^{n_1}  Q_{r,j}\geq \eta)\leq \exp(-m^{1/6})+\frac{21}{20}\Psi\left( \frac{\eta+ \ell_r \lambda_1}{ \sqrt{\ell_r(1-\lambda_k^2)+(n_1-\ell_r)} }\right).
\end{align*}
As $\ell_r=\lfloor |S^{(r)}(0:0)|\sqrt{\pi m /2} \rfloor$, this implies that for any $\eta>0$,
\begin{align*}
    \bP_0(|S^{(r)}(0:0)|-  \sum_{j=1}^{\ell_r}  \psi_{j,r} &+ \sum_{j=\ell_r+1}^{n_1}  Q_{r,j}\geq \eta)
    \leq \exp(-m^{1/6})+\frac{21}{20}\Psi\left( \frac{\eta-|S^{(r)}(0:0)|+ \ell_r \lambda_1}{ \sqrt{\ell_r(1-\lambda_k^2)+(n_1-\ell_r)} }\right)\\
    &\leq \exp(-m^{1/6})+\frac{21}{20}\Psi\left( \frac{\eta-|S^{(r)}(0:0)|m^{-1/8}-\lambda_1}{ \sqrt{\ell_r(1-\lambda_k^2)+(n_1-\ell_r)} }\right)\\
    &\leq \exp(-m^{1/6})+\frac{21}{20}\Psi\left( \frac{\eta-\sqrt{2/\pi m}((1+n_1)m^{-1/8}+(1-m^{-1/8}))}{ \sqrt{n_1} }\right).\\
\end{align*}
And for the other direction, for any $\eta<0$, we have
\begin{align*}
    \bP_0(|S^{(r)}(0:0)|-  \sum_{j=1}^{\ell_r}  \bar\psi_{j,r} + \sum_{j=\ell_r+1}^{n_1}  \bar Q_{r,j}\leq \eta)
    &\leq \exp(-m^{1/6})+\frac{21}{20}\Psi\left( \frac{-\eta+|S^{(r)}(0:0)|- \ell_r \bar \lambda_1}{ \sqrt{\ell_r(1-\bar \lambda_k^2)+(n_1-\ell_r)} }\right)\\
    &\leq \exp(-m^{1/6})+\frac{21}{20}\Psi\left( \frac{-\eta-|S^{(r)}(0:0)|m^{-1/8}}{ \sqrt{\ell_r(1-\lambda_k^2)+(n_1-\ell_r)} }\right)\\
    &\leq \exp(-m^{1/6})+\frac{21}{20}\Psi\left( \frac{-\eta-\sqrt{2/\pi m}(1+n_1)m^{-1/8}}{ \sqrt{n_1} }\right).
\end{align*}
So when $\ell_r\leq n_1$, we have for any $\eta>0$, 
\begin{align*}
    \bP_0(|S^{(r)}(0:1)|\geq \eta)\leq 2\exp(-m^{1/6})+\frac{21}{10}\Psi\left( \frac{\eta-\sqrt{2/\pi m}((1+n_1)m^{-1/8}+(1-m^{-1/8}))}{ \sqrt{n_1} }\right).
\end{align*}
Now, if $\ell_r>n_1$, we have for any $\eta>0$,
\begin{align}
    \bP(- \sum_{j=1}^{n_1}  \psi_{j,r}\geq \eta)
    &\leq \exp(-m^{1/6})+\frac{21}{20}\Psi\left( \frac{\eta+ n_1 \lambda_1}{ \sqrt{\ell_r(1-\lambda_k^2)+(n_1-\ell_r)} }\right)\\
    &\leq \exp(-m^{1/6})+\frac{21}{20}\Psi\left( \frac{\eta+ n_1 (1-m^{-1/8})\sqrt{2/(\pi m)}}{\sqrt{n_1}  }\right).\label{eq:psibound}
\end{align}
And when $\eta<0$, we have
\begin{align*}
    \bP_0(|S^{(r)}(0:0)|-  \sum_{j=1}^{n_1}  \bar \psi_{j,r}\leq \eta)
    &\leq \exp(-m^{1/6})+\frac{21}{20}\Psi\left( \frac{-\eta+|S^{(r)}(0:0)|- n_1 \bar \lambda_1}{ \sqrt{\ell_r(1-\lambda_k^2)+(n_1-\ell_r)} }\right)\\
    &\leq \exp(-m^{1/6})+\frac{21}{20}\Psi\left( \frac{-\eta - n_1 m^{-1/8}\sqrt{2/(\pi m)}}{\sqrt{n_1}  }\right).
\end{align*}
Therefore, note that as $S^{(r)}(0:0)$ is a binomial random variable, we have for any $\eta>0$,
\begin{align*}
    \bP(|S^{(r)}(0:1)|\geq \eta)&\leq \bP(\ell_r\leq n_1) \left( 2\exp(-m^{1/6})+\frac{21}{10}\Psi\left( \frac{\eta-\sqrt{2/\pi m}(1+n_1)m^{-1/8}}{ \sqrt{n_1} }\right) \right)\\
    &+\bP(\ell_r> n_1) \left( \exp(-m^{1/6})+\frac{21}{20}\Psi\left( \frac{\eta + n_1 m^{-1/8}\sqrt{2/(\pi m)}}{\sqrt{n_1}  }\right) \right)\\
    &+\bP(\ell_r> n_1) \bP(\sgn(S^{(r)}(0:0))S^{(r)}(0:1)\geq \eta \mid \ell_r> n_1).
\end{align*}
Note that condition on $\ell_r> n_1$,
\begin{align}
    \bP(|S^{(r)}(0:0)|\geq \eta \mid \ell_r> n_1)\leq  \left( 2\exp(-m^{1/6})+\frac{21}{20}\Psi\left( \frac{\eta}{\sqrt{n_0}  }\right) \right)/\bP(\ell_r> n_1).\label{eq:sbound}
\end{align}
Therefore, by combining equation \eqref{eq:psibound} and \eqref{eq:sbound} and Lemma \ref{l:combine}, we have
\begin{align*}
    &\bP(\ell_r> n_1) \bP(\sgn(S^{(r)}(0:0))S^{(r)}(0:1)\geq \eta \mid \ell_r> n_1)\\
    &\leq 4\exp(-m^{-1/6})+\frac{441}{200}\Psi\left( \frac{\eta+n_1(1-m^{-1/8})\sqrt{2/(\pi m)}}{\sqrt{n_0+n_1}  }\right).
\end{align*}
Therefore, all together, for $m$ large enough, we have
\begin{align*}
    \bP(|S^{(r)}(0:1)|\geq \eta)\leq \exp(-m^{1/5})+\frac{7}{3}\Psi\left( \frac{\eta}{\sqrt{n_1}  }\right)+\frac{7}{3}\Psi\left( \frac{\eta+n_1\sqrt{2/(\pi m)}}{\sqrt{n_0+n_1}  }\right).
\end{align*}
This holds for any $r \in \cB$. We deal with the depedence of rows in the following and thus prove the lemma. Note that this is rather similar to the proof of Lemma 5.5 in \cite{kim1998covering}, therefore we only record a few key arguments. We firstly note that for any $T\in [b,t-1]$,
\begin{align*}
    \bP\left( \sum_{i=1}^t \xi_i\geq T \right) \leq \frac{\binom{t}{b} q^b }{\binom{T}{b}}.
\end{align*}
To prove this, we notice that
\begin{align*}
    \E \left( \sum_{B \subseteq [t], |B|=b} I(\xi_i=1, \forall i \in B) \right) \leq \binom{t}{b} q^b.
\end{align*}
When $\sum_{i=1}^t \xi_i \geq T \geq b$, we have
\begin{align*}
    \sum_{B \subseteq [t], |B|=b} I(\xi_i=1, \forall i \in B)\geq \binom{T}{b}.
\end{align*}
Therefore the results follows by Markov's inequality. From here, the arguments are the same as in the proof of Lemma 5.5 in \cite{kim1998covering} and we omit the rest.
\end{proof}

\subsubsection{Proof of Lemma \ref{l:together}}
We firstly present the following two lemmas.
\begin{lem}\label{l:induct2}
The right hand side of \eqref{eq:inductbound0} (take $\eta=T_1$) is bounded by $m_2/m$. More precisely,
\begin{align*}
    (9/4)n_1^{-2/5}+(5/2)\Psi\left ( \frac{T_1+\mu_{1,1}}{\sigma_{1,1}}\right )+(5/2)\Psi\left ( \frac{T_1+\mu_{1,2}}{\sigma_{1,2}}\right )\leq m_{2}/m.
\end{align*}
\end{lem}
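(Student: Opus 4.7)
The approach is to substitute the explicit definitions into the left-hand side and bound each of the three terms separately, aiming to show that the total is at most $\Psi(\kappa/2+5/\kappa+5)-O(1/m)$, which is exactly what the floor definition $m_2=2\lfloor\Psi(\kappa/2+5/\kappa+5)m/2\rfloor+1$ gives as a lower bound for $m_2/m$. The hypothesis $\alpha\leq\alpha_0=\kappa^4/(4C_\kappa^6)$, together with the choice $\varepsilon_0=\kappa/10$ and the generous margin built into $C_\kappa=10/\kappa+10+\kappa$, is precisely what is needed to make the arguments of the two $\Psi$-terms large enough that they only account for a small fraction of $\Psi(\kappa/2+5/\kappa+5)$.

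\paragraph*{Term-by-term analysis.}
The polynomial prefactor $(9/4)n_1^{-2/5}$ is $O(n^{-2/5})=o(1)$ since $n_1\geq C_\kappa(\pi mn/2)^{1/2}-1=\Omega(n)$, so for $n$ large it is negligible compared to the positive constant $\Psi(\kappa/2+5/\kappa+5)$. For the second term, $\mu_{1,1}=-T_0=0$ and $\sigma_{1,1}=\sqrt{n_1}$, so that substituting $T_1=(\kappa-\varepsilon_0)\sqrt{n}/2$ and $\sqrt{n_1}\leq (C_\kappa(\pi mn/2)^{1/2})^{1/2}(1+o(1))$ yields
\[
\frac{T_1}{\sqrt{n_1}}\geq \frac{\kappa-\varepsilon_0}{2C_\kappa^{1/2}(\pi\alpha/2)^{1/4}}(1-o(1)).
\]
Plugging in $\alpha\leq\alpha_0=\kappa^4/(4C_\kappa^6)$ and $\varepsilon_0=\kappa/10$ gives a lower bound of the form $c_0 C_\kappa$ for an explicit $c_0>1/2$, which exceeds $\kappa/2+5/\kappa+5$ by at least $5$. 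For the third term, $\mu_{1,2}=n_1\sqrt{2/(\pi m)}\leq C_\kappa\sqrt{n}+o(1)$ and $\sigma_{1,2}=\sqrt{n_0+n_1}\leq\sqrt{n}$, which give
\[
\frac{T_1+\mu_{1,2}}{\sigma_{1,2}}\geq \frac{\kappa-\varepsilon_0}{2}+C_\kappa-o(1)=\frac{10}{\kappa}+10+\frac{3\kappa}{2}-\frac{\varepsilon_0}{2}-o(1),
\]
which again exceeds $\kappa/2+5/\kappa+5$ by at least $5$.

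\paragraph*{Combining, and main obstacle.}
Finally, apply the elementary tail estimate $\Psi(x+a)/\Psi(x)\leq e^{-a^2-2ax}$, valid for all $x,a>0$ (from the substitution $u\mapsto v+a$ inside the defining integral of $\Psi$), with $x=\kappa/2+5/\kappa+5$ and $a\geq 5$. This ensures each of the $(5/2)\Psi(\cdot)$ contributions is at most $\Psi(\kappa/2+5/\kappa+5)/4$, so adding the three pieces and subtracting the $1/m$ rounding error in $m_2/m$ produces the claim for $n$ large. The main friction in the write-up is the bookkeeping of absolute constants (the factors $C_\kappa$, $(\pi/2)^{1/4}$, $\sqrt{2}$, and $\varepsilon_0=\kappa/10$) when verifying that $T_1/\sqrt{n_1}\geq c_0C_\kappa$ for the prescribed $\alpha_0$; the third term is comparatively easy because the $10/\kappa+10$ margin built into $C_\kappa$ produces the required gap almost tautologically.
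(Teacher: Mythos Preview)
Your approach is essentially identical to the paper's: substitute the parameter definitions, bound each of the three summands, and compare against $\Psi(\kappa/2+5/\kappa+5)\approx m_2/m$. The paper's own proof is even more terse and contains what appears to be a typo (it bounds the sum by $\Psi(\kappa/2+5/\kappa+4)$, which is \emph{larger} than $m_2/m$), so your write-up is in fact more careful than the original.

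There is, however, one numerical overclaim you should fix. For the second term you assert that $T_1/\sqrt{n_1}\geq c_0C_\kappa$ with $c_0>1/2$ ``exceeds $\kappa/2+5/\kappa+5$ by at least $5$.'' Computing explicitly with $\alpha=\alpha_0$ gives
\[
c_0=\tfrac{9}{20}(8/\pi)^{1/4}\approx 0.568,
\]
so the excess is
\[
c_0C_\kappa-(\kappa/2+5/\kappa+5)\approx 0.068\kappa+0.68+0.68/\kappa,
\]
whose minimum over $\kappa>0$ is about $1.11$ (attained near $\kappa=\sqrt{10}$), not $5$. Fortunately this does not break the argument: since $x:=\kappa/2+5/\kappa+5\geq 5+\sqrt{10}>8$, even a gap $a\geq 1$ yields $a^2+2ax>16$, so your tail bound $\Psi(x+a)\leq e^{-a^2-2ax}\Psi(x)$ still gives $(5/2)\Psi(\cdot)\ll\Psi(x)/4$. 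Just replace ``by at least $5$'' with the correct (smaller) gap and note that $2ax$ is nevertheless large.
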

\begin{proof}
We check that the left hand side is bounded by 
\begin{align*}
    (5/2)\Psi\left ( \frac{9\kappa /20}{\sqrt{C_\kappa}(\alpha\pi /2)^{1/4}}\right )+(5/2)\Psi\left (C_\kappa\right )+o_n(1)
    \leq  \Psi\left ( \frac{\kappa}{2}+\frac{5}{\kappa}+4\right ),
\end{align*}
when $n $ is big.
\end{proof}

\begin{lem}\label{l:induct3}
For any $1< k\leq R$, the right hand side of \eqref{eq:inductbound} (take $\eta=T_k$) is bounded by $m_{k+1}/m$ for large enough $m$. More precisely,
\begin{align*}
    2\cdot 3^k\left( n^{-1/10} +\Psi \left( \frac{T_k+\mu_{k,1}}{\sigma_{k,1}}\right)+(3/2)(1+n^{-1/10})^k \Psi \left( \frac{T_k+\mu_{k,2}}{\sigma_{k,2}}\right)\right)\leq m_{k+1}/m.
\end{align*}
\end{lem}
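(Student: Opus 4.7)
The plan is to bound each of the three summands on the left-hand side by $\tfrac{1}{3}\, m_{k+1}/m$ and sum. The relevant asymptotics from the parameter definitions (with $\alpha=m/n$ fixed and $n\to\infty$) are: $m_k/m \sim \Psi(k\kappa/4+5/\kappa+5)$, $n_k \sim (\kappa/2)\sqrt{\pi m_k n/2}$, $n_i\sqrt{2/(\pi m_i)} \sim (\kappa/2)\sqrt{n}$ for $i\ge 2$, $n_1\sqrt{2/(\pi m)} \sim C_\kappa\sqrt{n}$, and $\sigma_{k,2}\le \sqrt n$. Since $R = O(\sqrt{\log n})$ we have $3^k \le n^{o(1)}$, and the defining inequality $m_{R-1}/m \le n^{-0.01} < m_R/m$ gives $m_{k+1}/m \ge m_R/m > n^{-0.01}$ for $k\le R-1$. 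Consequently the additive error $2\cdot 3^k n^{-1/10}$ is $o(m_{k+1}/m)$. Throughout I use the Gaussian tail estimate $\Psi(x)\le e^{-x^2}/(2x\sqrt{2\pi})$ for $x>0$.

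For the first Gaussian summand, $T_k - T_{k-1} = (\kappa-\varepsilon_0)\sqrt n/2^k$, while $m_k/m \lesssim \exp(-(k\kappa/4)^2)$ implies
\[
\sqrt{n_k} \lesssim \sqrt{\kappa/2}\,(\pi m_k/2)^{1/4}\,n^{1/4} \lesssim \alpha^{1/4}\,n^{1/2}\,\exp(-k^2\kappa^2/64).
\]
Hence $(T_k - T_{k-1})/\sqrt{n_k} \gtrsim \alpha^{-1/4}\, 2^{-k}\,\exp(k^2\kappa^2/64)$ grows super-exponentially in $k$, so $2\cdot 3^k\,\Psi((T_k - T_{k-1})/\sqrt{n_k})$ is doubly-exponentially small in $k$ and is easily dominated by $\tfrac{1}{3}\,m_{k+1}/m \asymp \exp(-((k+1)\kappa/4 + 5/\kappa + 5)^2)$.

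For the second Gaussian summand, set $A := (T_k+\mu_{k,2})/\sigma_{k,2}$ and $B := (k+1)\kappa/4 + 5/\kappa + 5$. Substituting the asymptotics above into $A$ and using $\sigma_{k,2} \le \sqrt n$ yields $A \ge C_\kappa + (k-1)\kappa/2 + (\kappa-\varepsilon_0)(1-2^{-k})$. Plugging in $C_\kappa = 10/\kappa+10+\kappa$ and $\varepsilon_0 = \kappa/10$, an elementary rearrangement gives
\[
A - B \ge \frac{5}{\kappa} + 5 + \kappa + \frac{(k-3)\kappa}{4}, \qquad A + B \ge \frac{15}{\kappa} + 15 + \frac{(3k-1)\kappa}{4}.
\]
Therefore $A^2 - B^2 = (A-B)(A+B) \ge (5/\kappa + 5)(15/\kappa + 15) + \Omega(k^2\kappa^2)$, which dominates $(k+1)\log 3$ for all $k\ge 2$. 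Combined with the Gaussian tail bound this gives
\[
3\cdot 3^k(1+n^{-1/10})^k\,\Psi(A) \;\le\; e^{(k+1)\log 3 - A^2 + B^2}\,\Psi(B) \;\le\; \tfrac{1}{3}\,m_{k+1}/m,
\]
as required.

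The main obstacle is controlling small $k$ (in particular $k=2$), where the asymptotic gain $A^2 - B^2 \asymp k^2\kappa^2$ is weakest. Here one exploits the non-asymptotic slack $5/\kappa + 5$ in $A - B$, engineered by the specific choice $C_\kappa = 10/\kappa + 10 + \kappa$: the product $(5/\kappa + 5)(15/\kappa + 15) = 75/\kappa^2 + 150/\kappa + 75$ already beats $(k+1)\log 3$ at $k=2$ by a wide margin. The remainder of the verification at small $k$ is a direct calculation, after which the bound propagates because $A - B$ and $A + B$ are nondecreasing in $k$ with $A - B$ growing at rate at least $\kappa/4$.
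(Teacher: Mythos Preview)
Your proposal is correct and follows essentially the same route as the paper's proof. Both arguments dispose of the $2\cdot 3^k n^{-1/10}$ term as negligible (the paper simply writes $o_n(1)$), show that the argument of the first $\Psi$ term blows up because $\sqrt{n_k}\asymp \Psi(k\kappa/4+5/\kappa+5)^{1/4}\sqrt{n}$, and compare the argument of the second $\Psi$ term to $(k+1)\kappa/4+5/\kappa+5$ via the Gaussian tail estimate $\Psi(x)\approx e^{-x^2}$; your explicit computation of $A-B$ and $A+B$ is just a more detailed version of the paper's bare assertion $3^k\Psi(k\kappa/2+5+5/\kappa)\le \Psi(k\kappa/4+5+5/\kappa)$.
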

\begin{proof}
The left hand side is bounded from above by
\begin{align*}
    &2\cdot 3^k \Psi \left( \frac{\frac{9}{10} \kappa/2^k}{ \sqrt{\kappa}\Psi( k\kappa/4+5+5/\kappa)^{1/4}}\right)+4 \cdot 3^k \Psi \left( k\kappa/2+5+5/\kappa\right)+o_n(1).
\end{align*}
Note that as 
\begin{align*}
\frac{\frac{9}{10} \kappa/2^k}{ \sqrt{\kappa}\Psi( k\kappa/4+5+5/\kappa)^{1/4}}> k\kappa/2+5+5/\kappa,
\end{align*}
and
\begin{align*}
    3^k \Psi \left( k\kappa/2+5+5/\kappa\right)\leq \Psi \left( k\kappa/4+5+5/\kappa\right),
\end{align*}
we have that the inequality holds.
\end{proof}

\begin{proof}[Proof of Lemma \ref{l:together}]
This is shown inductively. Notice that
\begin{align*}
    \sgn(S(0:k-1))S(0:k)&=\sgn(S(0:k-1))(S(0:k-1)+S(k:k))\\
    &=|S(0:k-1)|+\sgn(S(0:k-1))S(k:k).
\end{align*}
We note that $S(0:k-1)$ is symmetric around $0$. Therefore, $|S(0:k-1)|$ and $\sgn(S(0:k-1))$ are independent. By Lemma \ref{l:step1p2}, we have that for any $\eta\geq 0$,
\begin{align*}
    \bP_2(|S(0:1)|\geq \eta)\leq (9/4)n_1^{-2/5}+(5/2)\Psi\left ( \frac{\eta+\mu_{1,1}}{\sigma_{1,1}}\right )+(5/2)\Psi\left ( \frac{\eta+\mu_{1,2}}{\sigma_{1,2}}\right ).
\end{align*}
So the induction hypothesis holds for $k=1$. Now for any $1<k<R$, by the induction hypothesis and Lemma \ref{l:bigabs}, we have that with probability larger than $1-(k-1)\exp(-n^{1/75})-\exp(-n^{1/73})$, the sample cdf of $|S^{(r)}(0:k-1\mid \cR_{k+1})|$ is bounded from below by
\begin{align*}
    1-\frac{2m 3^{k-1}}{m_{k}}\left( n^{-1/10} +\Psi \left( \frac{\eta+\mu_{k-1,1}}{\sigma_{k-1,1}}\right)+(3/2)(1+n^{-1/10})^{k-1} \Psi \left( \frac{\eta+\mu_{k-1,2}}{\sigma_{k-1,2}}\right)\right),
\end{align*}
for any $\eta\geq T_{k-1}$. Write $t=m_k$ and assume $\{r_1,\cdots,r_t\}$ are the $t$ elements of $\cR_{k}$. By Lemma \ref{l:sam2stat} and \ref{l:sam2stat2}, we can define iid random variables $W_1,\cdots,W_t$ on the same space as $\{S^{(r)}(0:k-1)\}_{r\in \cR_{k}}$ with statistical cdf bounded from below by
\begin{align*}
    1-\frac{2m 3^{k-1}}{m_{k}}\left(  n^{-1/10} +\Psi \left( \frac{\eta+\mu_{k-1,1}}{\sigma_{k-1,1}}\right)+(3/2)(1+n^{-1/10})^{k-1} \Psi \left( \frac{\eta+\mu_{k-1,2}}{\sigma_{k-1,2}}\right)\right)-m_{k}^{-1/6},
\end{align*}
and that 
\begin{align*}
    W_i\geq |S^{(r_i)}(0:k-1)|, \quad \forall i \in [t],
\end{align*}
with probability larger than $1-(k-1)\exp(-n^{1/75})-2\exp(-n^{1/73})$. Condition on $\cR_{k}$, the random variables $|S^{(r)}(0:k-1)|$ and $\sgn(S^{(r)}(0:k-1))S(k:k)$ are independent. By Lemma \ref{l:kstep}, we can define iid random variables $\xi_1,\cdots,\xi_t$ and $\bar\xi_1,\cdots,\bar\xi_t$ which are also independent of $|S^{(r)}(0:k-1)|$ on the same space as $|S^{(r)}(0:k-1)|$ with cdf
\begin{align*}
    &1-\min \left\{ 1, (5/4)m_{k}^{-2/5}+(5/4)\Psi\left ( \frac{\eta+ \lambda_{k} n_{k}}{(n_{k}(1- \lambda_{k}^2))^{1/2}}\right )\right\},\\
    &\min \left\{ 1, (5/4)m_{k}^{-2/5}+(5/4)\Psi\left ( \frac{-\eta-\bar \lambda_{k} n_{k+1}}{(n_{k}(1-\bar \lambda_{k}^2))^{1/2}}\right)\right\},
\end{align*}
such that with probability larger than $1-(k-1)\exp(-n^{1/75})-3\exp(-n^{1/73})$, we have 
\begin{align*}
    \bar \xi_i \leq \sgn(S^{(r_i)}(0:k-1))S^{(r_i)}(k:k)\leq \xi_i.
\end{align*}
This implies that
\begin{align*}
    \bar \xi_i \leq \sgn(S^{(r_i)}(0:k-1))S(0:k)\leq W_i+\xi_i.
\end{align*}
By Lemma \ref{l:combine}, we have that for any $\eta>T_{k}$ and $n$ large enough,
\begin{align*}
    \bP(W_i+\xi_i\geq \eta)
    &\leq \frac{2m3^{k-1}}{m_{k}}\left(  2 n^{-1/10} +(5/4)\Psi \left( \frac{\eta+\mu_{k-1,1}+\lambda_{k}n_{k}}{(\sigma_{k-1,1}^2+n_{k}(1- \lambda_{k}^2))^{1/2}}\right)\right.\\
    &\left.+(15/8)(1+n^{-1/10})^{k-1} \Psi \left( \frac{\eta+\mu_{k-1,2}+\lambda_{k}n_{k}}{(\sigma_{k-1,2}^2+n_{k}(1- \lambda_{k}^2))^{1/2}}\right)\right).
\end{align*}
And for any $\eta<-T_{k}$ and $n$ large enough,
\begin{align*}
    \bP(\bar \xi_i\leq \eta) 
    \leq (5/4)m_{k}^{-2/5}+(5/4)\Psi\left ( \frac{-\eta-\bar \lambda_{k} n_{k}}{(n_{k}(1-\bar \lambda_{k}^2))^{1/2}}\right).
\end{align*}
Further, by Lemma \ref{l:stat2sam}, we have a bound on the empirical cdf. With probability larger than $1-(k-1)\exp(-n^{1/75})-4\exp(-n^{1/73})$, we have
\begin{align*}
    &\bP_2(\sgn(S^{(r_i)}(0:k-1))S(0:k)\geq \eta)\leq \bP_2(W_i+\xi_i\geq \eta) \\
    &\leq t^{-2/5}+\frac{2m3^{k-1}(1+t^{-1/12})}{m_{k}}\left(  2 n^{-1/10} +(5/4)\Psi \left( \frac{\eta+\mu_{k-1,1}+\lambda_{k}n_{k}}{(\sigma_{k-1,1}^2+n_{k}(1- \lambda_{k}^2))^{1/2}}\right)\right.\\
    &\left.+(15/8)(1+n^{-1/10})^{k-1} \Psi \left( \frac{\eta+\mu_{k-1,2}+\lambda_{k}n_{k}}{(\sigma_{k-1,2}^2+n_{k}(1- \lambda_{k}^2))^{1/2}}\right)\right).
\end{align*}
And similarly, 
\begin{align*}
    &\bP_2(\sgn(S^{(r_i)}(0:k-1))S(0:k)\leq \eta)\leq \bP_2(\bar \xi_i\leq \eta)\\
    &\leq t^{-2/5}+(1+t^{-1/12})\left((5/4)m_{k}^{-2/5}+(5/4)\Psi\left ( \frac{-\eta-\bar \lambda_{k} n_{k}}{(n_{k}(1-\bar \lambda_{k}^2))^{1/2}}\right)\right).
\end{align*}
Furthermore, for rows $r\in \cR_{k}^c$, by Lemma \ref{l:induct2} and Lemma \ref{l:induct3}, we have that 
\begin{align*}
    |S^{(r)}(0:k-1)|\leq T_{k-1}, \quad \forall r\in \cR_{k},
\end{align*}
with probability larger than $1-(k-1)\exp(-n^{1/75})$. As $|S^{(r)}(0:k-1)|$ and $\sgn(S^{(r)}(0:k-1))S^{(r)}(k:k)$ are independent and the latter is the sum of iid Randemacher random variables, by Lemma \ref{l:kstep2}, for any $\eta>T_k$, we have
\begin{align*}
    \bP_2(|S^{(r)}(0:k\mid \cR_{k}^c)|\geq \eta)\leq (5/2)m_{k}^{-2/5}+(5/2)\Psi\left ( \frac{\eta-T_{k-1}}{(n_{k})^{1/2}}\right ),
\end{align*}
with probability larger than $1-(k-1)\exp(-n^{1/75})-6\exp(-n^{1/73})$. Combining the above, for any $\eta>T_k$ and $n$ large enough, we have
\begin{align*}
    &\bP_2(|S^{(r)}(0:k)|\geq \eta)\leq (5/2)m_{k}^{-2/5}+(5/2)\Psi\left ( \frac{\eta-T_{k-1}}{(n_{k})^{1/2}}\right )+4\cdot 3^{k-1} n^{-1/10} \\
    &+(5/2)\Psi \left( \frac{\eta+\mu_{k-1,1}+\lambda_{k}n_{k}}{(\sigma_{k-1,1}^2+n_{k}(1-\bar \lambda_{k}^2))^{1/2}}\right)+(15/4)(1+n^{-1/10})^{k-1} \Psi \left( \frac{\eta+\mu_{k-1,2}+\lambda_{k}n_{k}}{(\sigma_{k-1,2}^2+n_{k}(1-\bar \lambda_{k}^2))^{1/2}}\right)\\
    &\leq 2\cdot 3^{k}\left( n^{-1/10} +\Psi \left( \frac{\eta+\mu_{k,1}}{\sigma_{k,1}}\right)+(3/2)(1+n^{-1/10})^{k} \Psi \left( \frac{\eta+\mu_{k,2}}{\sigma_{k,2}}\right)\right),
\end{align*}
with probability larger than $1-k\exp(-n^{1/75})$. Note that the last inequality follows from the fact that the second last term is much smaller that $\Psi((\eta-T_{k-1})/n_{k}^{1/2})$. Therefore, the lemma holds. 
\end{proof}

\subsubsection{Proof of Lemma \ref{l:together2}}\label{sss:interp}
We firstly present similar lemmas as in subsection \ref{ss:inductSBP}. Recall our definition of the tree structure. For simplicity, we write $\vec h = (h_0,\cdots,h_k)$. Then for any $0\leq h_k\leq D_k$, we have the following
\begin{align*}
    S^{(r)}(v,h_0,\cdots,h_k)(k:k)&=\sum_{j\in \cC_{k,1}} G_{r,j}T(v,h_0,\cdots,h_{k-1},0)_j+\sum_{j\in \cC_{k,2}} G_{r,j}T(v,h_0,\cdots,h_{k-1},D_{k})_j\\
    &:=S_1^{(r)}(v,\vec h )(k:k)+S_2^{(r)}(v,\vec h)(k:k),
\end{align*}
for some partition of sets $\cC_k$ into $\cC_{k,1}$ and $\cC_{k,2}$. Define $n_{k,1}=|\cC_{k,1}|$ and $n_{k,2}=|\cC_{k,2}|$. We use $\cR_k$ to denote the rows involved in computation in round $k$ for $(h_0,\cdots,h_{k-1})$ and $\cR_k'$ for $(h_0,\cdots,h_{k-1})^{+1}$. Further, we use $\vec h_1$ to denote the vector $(h_1,\cdots,h_{k-1})$ and $\vec h_2$ to denote the vector $(h_1,\cdots,h_{k-1})^{+1}$. 
\begin{lem}\label{l:roundisum2}
For any $1<k< R$, write $t=m_k$ and assume $\{r_1,\cdots,r_t\}$ are the $t$ elements of $\cR_k$. For $\eta \in \bR$, we define 
\begin{align*}
    &\gamma_i:=I(\sgn(S^{(r_i)}(v,\vec h_1)(0:k-1))S_1^{(r_i)}(v,\vec h)(k:k)\geq \eta),\\
    &\bar \gamma_i:=I(\sgn(S^{(r_i)}(v,\vec h_1)(0:k-1))S_1^{(r_i)}(v,\vec h)(k:k)\leq \eta).
\end{align*}
Further, define
\begin{align*}
    &q:=\exp(-m_k^{1/6})+\frac{9}{8}\Psi\left( \frac{\eta+ \lambda_k n_{k,1}}{ (n_{k,1}(1-\lambda_k^2))^{1/2} }\right),\\
    &\bar q:=\exp(-m_k^{1/6})+\frac{9}{8}\Psi\left( \frac{-\eta- \bar \lambda_k n_{k,1}}{ (n_{k,1}(1- \bar\lambda_k^2))^{1/2} }\right).
\end{align*}
 Then we have for $n$ large enough,
\begin{align*}
    \bP \left( \sum_{i=1}^t \gamma_i \geq t^{3/5}+(1+t^{-1/{12}})qt  \right)\leq \exp(-t^{1/{70}}),
\end{align*}
and
\begin{align*}
    \bP \left( \sum_{i=1}^t \bar \gamma_i \geq t^{3/5}+(1+t^{-1/{12}})\bar qt  \right)\leq \exp(-t^{1/{70}}).
\end{align*}
\end{lem}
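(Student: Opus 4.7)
The plan is to mimic the proof of Lemma \ref{l:roundisum} essentially verbatim, with the only structural change being that the sum of $n_k$ columns is replaced by a sum over the $n_{k,1}$ columns in $\cC_{k,1}$. The starting observation is that for every $j\in \cC_{k,1}$, the interpolation vector $T(v,\vec h)$ agrees with $\cA_k(T(v,\vec h_1))$, so $T(v,\vec h)_j$ is exactly the majority-vote output of round $k$ of the algorithm applied along the path $(h_0,\dots,h_{k-1})$, using the row set $\cR_k$. Writing this out explicitly and performing the same sign-absorption manipulation used in Lemma \ref{l:roundisum} (i.e., multiplying each column by an $\cR_k$-measurable sign that only depends on information from rounds $0,\dots,k-1$) yields
\begin{align*}
\sgn\bigl(S^{(r)}(v,\vec h_1)(0:k-1)\bigr)\, S_1^{(r)}(v,\vec h)(k:k)
\;=\; -\sum_{j\in \cC_{k,1}} G_{r,j}\,\sgn\Bigl(\sum_{s\in \cR_k} G_{s,j}\Bigr).
\end{align*}

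Next, for any $b$-element subset $\cB=\{B_1,\dots,B_b\}\subseteq \cR_k$ with $b\leq m_k^{1/10}$, fix a column $j\in \cC_{k,1}$ and apply Lemma \ref{l:ulbound} to the Rademacher vector $(G_{s,j})_{s\in \cR_k}$: this produces i.i.d.\ $\psi_{j,r}\sim \mathrm{Bern}(\tfrac12(1+\lambda_k))$ and $\bar\psi_{j,r}\sim \mathrm{Bern}(\tfrac12(1+\bar\lambda_k))$ (for $r\in \cB$) that sandwich $G_{r,j}\sgn(\sum_{s\in \cR_k}G_{s,j})$. Aggregating over $j\in \cC_{k,1}$ gives, for each $r\in\cB$, two-sided bounds by binomial sums with parameters $(n_{k,1},\tfrac12(1\pm\lambda_k))$ and $(n_{k,1},\tfrac12(1\pm\bar\lambda_k))$. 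Invoking Lemma \ref{l:binomial0} with these binomials delivers the tail estimates
\begin{align*}
\bP\bigl(\sgn(S^{(r)})\,S_1^{(r)}(k:k)\geq \eta\bigr) &\leq q,\qquad
\bP\bigl(\sgn(S^{(r)})\,S_1^{(r)}(k:k)\leq \eta\bigr) \leq \bar q,
\end{align*}
with $q,\bar q$ as in the statement (and $n_{k,1}$ replacing $n_k$ in the variance).

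These bounds hold simultaneously for every $r\in \cB$ with probability at most $q^b$ (resp.\ $\bar q^b$), since the $\psi_{j,r}, \bar\psi_{j,r}$ produced by Lemma \ref{l:ulbound} are mutually independent across $r\in\cB$ on the coupled probability space. The indicators $\gamma_i$ (resp.\ $\bar\gamma_i$) are exchangeable as $i$ ranges over $\cR_k$ because conditional on $\cR_k$ the row labels are permutation-invariant. Lemma \ref{l:KRexchange} then immediately converts the $b$-wise bound into the desired tail inequality $\bP(\sum \gamma_i \geq t^{3/5}+(1+t^{-1/12})qt)\leq \exp(-t^{1/70})$, and identically for $\bar\gamma_i$.

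The only subtlety — and what I expect to be the main obstacle — is the measurability/independence bookkeeping underlying the sign-absorption step: the set $\cR_k$, the partition $\cC_k=\cC_{k,1}\sqcup \cC_{k,2}$, and the signs $\sgn(S^{(r)}(v,\vec h_1)(0:k-1))$ are all determined by $G(0:k-1)$ and by choices of $\vec h_1$, whereas the columns $\{G_{r,j}:j\in \cC_{k,1}\}$ are an independent block of Rademacher variables. One must verify that the tree construction does not reintroduce dependence between these blocks (e.g., through the interpolation index $h_k$), but this follows because the partition $\cC_{k,1}$ depends only on $\cA_k(T(v,\vec h_1))$ and $\cA_k(T^{+1}(v,\vec h_1))$ together with $h_k$, all of which are measurable with respect to $G(0:k-1)$ and $G(k:k)$ through the already-fixed majority votes — the same conditioning that makes the analogous step in Lemma \ref{l:roundisum} go through. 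Once this is checked, the rest is a direct transcription of the earlier argument.
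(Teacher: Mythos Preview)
Your proposal is correct and takes essentially the same approach as the paper: the paper's entire proof of this lemma is the single sentence ``The proof is the same as before,'' i.e., a verbatim repetition of the argument for Lemma~\ref{l:roundisum} with $n_k$ replaced by $n_{k,1}$, which is precisely what you spell out. Your explicit flagging of the measurability/independence bookkeeping for the partition $\cC_{k,1}$ is in fact more careful than what the paper writes down.
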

\begin{proof}
The proof is the same as before.
\end{proof}

Similarly, we have the version for $\cR_k'$.
\begin{lem}\label{l:roundisum20}
For any $1<k< R$, write $t=m_k$ and assume $\{r_1,\cdots,r_t\}$ are the $t$ elements of $\cR_k'$. For $\eta \in \bR$, we define 
\begin{align*}
    &\gamma_i:=I(\sgn(S^{(r_i)}(v,\vec h_2)(0:k-1))S_2^{(r_i)}(v,\vec h)(k:k)\geq \eta),\\
    &\bar \gamma_i:=I(\sgn(S^{(r_i)}(v,\vec h_2)(0:k-1))S_2^{(r_i)}(v,\vec h)(k:k)\leq \eta).
\end{align*}
Further, define
\begin{align*}
    &q:=\exp(-m_k^{1/6})+\frac{9}{8}\Psi\left( \frac{\eta+ \lambda_k n_{k,2}}{ (n_{k,2}(1-\lambda_k^2))^{1/2} }\right),\\
    &\bar q:=\exp(-m_k^{1/6})+\frac{9}{8}\Psi\left( \frac{-\eta- \bar \lambda_k n_{k,2}}{ (n_{k,2}(1- \bar\lambda_k^2))^{1/2} }\right).
\end{align*}
 Then we have for $n$ large enough,
\begin{align*}
    \bP \left( \sum_{i=1}^t \gamma_i \geq t^{3/5}+(1+t^{-1/{12}})qt  \right)\leq \exp(-t^{1/{70}}),
\end{align*}
and
\begin{align*}
    \bP \left( \sum_{i=1}^t \bar \gamma_i \geq t^{3/5}+(1+t^{-1/{12}})\bar qt  \right)\leq \exp(-t^{1/{70}}).
\end{align*}
\end{lem}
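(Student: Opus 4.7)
The plan is to mirror the proof of Lemma \ref{l:roundisum2}, keeping the same scheme but with the roles of $(\cR_k, S_1, \vec h_1, n_{k,1})$ replaced by $(\cR_k', S_2, \vec h_2, n_{k,2})$. The whole argument is essentially a change of bookkeeping, since $\cA_k(T^{+1}(v, h_0, \dots, h_{k-1}))$ is constructed by exactly the same weighted majority vote rule as $\cA_k(T(v, h_0, \dots, h_{k-1}))$ but with respect to the rows in $\cR_k'$, and $S_2^{(r)}(v,\vec h)(k:k)$ records the inner product over the coordinates $\cC_{k,2}$ whose $X$-entries come from $T(v,h_0,\dots,h_{k-1},D_k)$.

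The first step is to expand, for $r \in \cR_k'$,
\begin{equation*}
S_2^{(r)}(v,\vec h)(k:k) = -\sgn(S^{(r)}(v,\vec h_2)(0:k-1)) \sum_{j \in \cC_{k,2}} G_{r,j}\,\sgn\!\Bigl(\sum_{r' \in \cR_k'} G_{r',j}\Bigr),
\end{equation*}
so that $\sgn(S^{(r)}(v,\vec h_2)(0:k-1))\,S_2^{(r)}(v,\vec h)(k:k)$ is a signed sum of column-majority terms, exactly as in the original lemma but with $|\cC_{k,2}| = n_{k,2}$ summands drawn against the $m_k$-row majority over $\cR_k'$. Next, I apply Lemma \ref{l:ulbound} with $t = m_k$ to pass to i.i.d.\ Bernoulli stochastic dominants $\psi_{j,r} \sim \mathrm{Bern}(\tfrac12(1+\lambda_k))$ from below and $\bar\psi_{j,r} \sim \mathrm{Bern}(\tfrac12(1+\bar\lambda_k))$ from above, using that $n_{k,2} \leq n_k \leq t^{1/10}$ is allowed by the parameter choice (any $b$-element subset of $\cR_k'$ of size $b = 2\lfloor m_k^{1/10}/2\rfloor$ works).

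Then Lemma \ref{l:binomial0} applied to the resulting Binomial tails yields the single-row bounds
\begin{equation*}
\bP\bigl(\sgn(S^{(r)}(v,\vec h_2)(0:k-1))\,S_2^{(r)}(v,\vec h)(k:k) \geq \eta\bigr) \leq q, \qquad
\bP(\text{same} \leq \eta) \leq \bar q,
\end{equation*}
with $q,\bar q$ as defined in the statement (note that the Gaussian normalization uses $n_{k,2}$ in place of $n_{k,1}$, but the derivation is identical). Since these single-row bounds hold uniformly over any $b$-element subset of $\cR_k'$, the $\gamma_i$'s and $\bar\gamma_i$'s form exchangeable $0/1$ sequences satisfying the hypothesis of Lemma \ref{l:KRexchange} with parameters $q$, resp.\ $\bar q$.

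Finally, Lemma \ref{l:KRexchange} with $t = m_k$ gives the claimed concentration
\begin{equation*}
\bP\!\left(\sum_{i=1}^t \gamma_i \geq t^{3/5} + (1+t^{-1/12})qt\right) \leq \exp(-t^{1/70}),
\end{equation*}
and identically for $\bar\gamma_i$ with $\bar q$. There is no genuine obstacle here: every ingredient is already stated in the excerpt, and the only mild subtlety is confirming that the $\cR_k'$-majority structure produces an exchangeable $\{0,1\}$-sequence on $\{r_1,\dots,r_t\}$, which follows because the signs $\sgn(S^{(r)}(v,\vec h_2)(0:k-1))$ are determined by the previous rounds and the column majorities in $\cC_{k,2}$ are symmetric functions of the rows in $\cR_k'$. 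Hence the argument goes through verbatim, only with $n_{k,2}$ in place of $n_{k,1}$.
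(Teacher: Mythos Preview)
Your proposal is correct and matches the paper's approach exactly---the paper itself simply says ``the proof is the same as before,'' and your elaboration follows the template of Lemma~\ref{l:roundisum} with the expected substitutions $(\cR_k,S_1,\vec h_1,n_{k,1})\to(\cR_k',S_2,\vec h_2,n_{k,2})$. One small slip: the inequality $n_{k,2}\leq n_k\leq t^{1/10}$ you mention is false and irrelevant (it is $b$, the size of the row-subset $\cB\subset\cR_k'$, that must satisfy $b\leq m_k^{1/10}$ in Lemma~\ref{l:ulbound}, not the number of columns), but your parenthetical already supplies the correct condition, so the argument stands.
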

\begin{proof}
The proof is the same as before. 
\end{proof}

The two lemmas above together imply the same Lemma \ref{l:kstep} and Lemma \ref{l:kstep2} with $n_k$ replaced by $n_{k,1}$ and $n_{k,2}$ and $S(k:k\mid\cR_k)$ and $S(k:k\mid\cR_k^c)$ replaced by $S_1(v,\vec h)(k:k\mid\cR_k)$, $S_1(v,\vec h)(k:k\mid\cR_k^c)$ and $S_2(v,\vec h)(k:k\mid\cR_k')$, $S_2(v,\vec h)(k:k\mid\cR_k'^c)$ respectively. 

\begin{proof}[Proof of Lemma \ref{l:together2}]
The proof is similar to the proof of Lemma \ref{l:together}.
We define $t=m_{k}$ and write $\{r_1,\cdots,r_t\}$ as the set $\cR_{k}$. For $r \in \cR_{k}$, we can define iid random variables $W_1,\cdots,W_t$ on the same space as $\{S^{(r)}(v,\vec h)(0:k-1)\}_{r\in \cR_{k+1}}$ such that 
\begin{align*}
    W_i\geq |S^{(r_i)}(v,\vec h)(0:k-1)|, \forall i \in [t]
\end{align*}
with probability larger than $1-(k-1)\exp(-n^{1/75})-\exp(-n^{1/72})$. Further, we can define iid random variables $\xi_1,\cdots,\xi_t$ and $\bar \xi_1,\cdots,\bar \xi_t$ such that 
\begin{align*}
    \bar \xi_i \leq \sgn(S^{(r_i)}(v,\vec h_1)(0:k-1))S_1^{(r_i)}(v,\vec h)(k:k)\leq \xi_i
\end{align*}
with probability larger than $1-\exp(-n^{1/73})$. And similarly, if we write $\{r'_1,\cdots,r'_t\}$ as the set $\cR_{k}'$, then we can define iid random variables $\xi'_1,\cdots,\xi'_t$ and $\bar \xi'_1,\cdots,\bar \xi'_t$ such that 
\begin{align*}
    \bar \xi'_i \leq \sgn(S^{(r_i)}(v,\vec h_2)(0:k-1))S_2^{(r_i)}(v,\vec h)(k:k)\leq \xi'_i
\end{align*}
with probability larger than $1-\exp(-n^{1/73})$. Further, we note that for any $r\in \cR_k \cup \cR_k' $ the sign of $S^{(r_i)}(v,\vec h_1)(0:k-1)$ and $S^{(r_i)}(v,\vec h_2)(0:k-1)$ are the same, as $r$ is among the rows with largest absolute values. We thus denote the sign as $\sgn(S^{(r_i)}(v,\vec h)(0:k-1))$ for simplicity. This implies that for any $r_i=r_j'\in \cR_{k+1}\cap \cR_{k+1}'$,
\begin{align*}
    \bar \xi_i+ \bar \xi'_j\leq \sgn(S^{(r_i)}(v,\vec h)(0:k-1))S^{(r_i)}(v,\vec h)(k:k) \leq W_i+\xi_i+\xi_j'.
\end{align*}
Furthermore, for rows $r\in (\cR_{k+1}\cup \cR_{k+1}')^c$, by Lemma \ref{l:induct2} and Lemma \ref{l:induct3}, we have that 
\begin{align*}
    |S^{(r)}(0:k)|\leq T_{k}+2,
\end{align*}
with probability larger than $1-(k-1)\exp(-n^{1/75})$. This implies that for $r\in (\cR_{k}\cup \cR_{k}')^c$,
\begin{align*}
    \bar B_r\leq \sgn(S^{(r)}(v,\vec h)(0:k-1))S^{(r)}(v,\vec h)(k:k) \leq T_{k}+2+ B_r,
\end{align*}
where $B_r$ and $\bar B_r$ are sums of $n_{k}$ iid random variables $\sim \mathrm{Bern(1/2)}$. For $r\in \cR_{k}\backslash \cR_{k}'$, 
\begin{align*}
    \bar \xi_i+\bar B_i'\leq \sgn(S^{(r_i)}(v,\vec h)(0:k-1))S^{(r_i)}(v,\vec h)(k:k) \leq T_{k}+2+\xi_i+ B_i',
\end{align*}
where $B_i$ and $\bar B_i$ are sums of $n_{k,2}$ iid random variables $\sim \mathrm{Bern(1/2)}$. For $r\in \cR_{k}'\backslash \cR_{k}$, 
\begin{align*}
    \bar \xi_i'+\bar B_i''\leq \sgn(S^{(r_i')}(v,\vec h)(0:k-1))S^{(r_i')}(v,\vec h)(k:k) \leq T_{k}+2+\xi_i'+B_i'',
\end{align*}
where $B_i$ and $\bar B_i$ are sums of $n_{k,1}$ iid random variables $\sim \mathrm{Bern(1/2)}$. The rest of the arguments are the same as in the proof of Lemma \ref{l:together}. By Lemma \ref{l:roundisum2}, Lemma \ref{l:roundisum20}, Lemma \ref{l:induct2}, Lemma \ref{l:induct3} and Lemma \ref{l:kstep2}, we have that 
\begin{align*}
    &\bP_2(|S^{(r)}(0:k)|\geq \eta)\\
    &\leq 2\cdot 3^{k}\left( n^{-1/10} +\Psi \left( \frac{\eta+\mu_{k,1}}{\sigma_{k,1}}\right)+(3/2)(1+n^{-1/10})^{k} \Psi \left( \frac{\eta+\mu_{k,2}}{\sigma_{k,2}}\right)\right),
\end{align*}
with probability larger than $1-k\exp(-n^{1/75})$. Therefore, the lemma holds. 
\end{proof}

\subsubsection{Proof of Lemma \ref{l:together3}}\label{sss:interp2}
In this section, we prove the inductive bound for the ABP model. We firstly show the following lemma.
\begin{lem}\label{l:induct4}
For any $1\leq k\leq R$, the right hand side of \eqref{eq:inductbound2} (take $\eta=T_k$) is bounded by $m_{k}/m$ for large enough $m$. More precisely,
\begin{align*}
    3^k\left( n^{-1/10} +\Psi \left( \frac{-T_k+\mu_{k,1}}{\sigma_{k,1}}\right)+(3/2)(1+n^{-1/10})^k \Psi \left( \frac{-T_k+\mu_{k,2}}{\sigma_{k,2}}\right)\right)\leq m_{k+1}/m.
\end{align*}
\end{lem}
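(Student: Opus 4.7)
The plan is to expand each of the three summands on the left-hand side using the explicit definitions of $T_k$, $\mu_{k,j}$, $\sigma_{k,j}$, $n_k$, and $m_k$ from the ABP parameter list, and to compare each one to a constant fraction of $m_{k+1}/m = \Psi(k+6)+O(1/m)$. Since $R$ is chosen so that $\Psi(R+5)\asymp n^{-0.01}$, one has $R=O(\sqrt{\log n})$, and consequently $3^k=n^{o(1)}$ and $(1+n^{-1/10})^k = 1+o(1)$ throughout the relevant range of $k$. These two facts let all combinatorial prefactors be absorbed into the Gaussian tail estimate $\Psi(x)\le e^{-x^2/2}$, reducing the lemma to checking that two specific linear quantities are sufficiently large.

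For the first summand, $3^k n^{-1/10} = n^{-1/10+o(1)}$ is far smaller than $\Psi(k+6)\ge\Psi(R+6)\asymp n^{-0.01}$. For the second summand, using $T_{k-1}-T_k = \sqrt{n}/2^k$ and $\sigma_{k,1}=\sqrt{n_k}$ with $n_k=\lfloor\sqrt{2\pi m_k n}\rfloor$ and $m_k=(1+o(1))\Psi(k+5)\,m$, I get
$$\frac{-T_k+\mu_{k,1}}{\sigma_{k,1}} \;=\; (1+o(1))\,\frac{1}{2^k}\left(\frac{1}{2\pi\alpha\,\Psi(k+5)}\right)^{1/4},$$
which, because $\alpha<\alpha_0=1/(100C_\kappa^2)$ is small and $\Psi(k+5)^{1/4}$ decays much faster than $2^k$ grows, is large enough that $3^k$ times the corresponding $\Psi$ value is $o(\Psi(k+6))$.

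For the third summand I exploit the algebraic identities built into the parameter choices: for $2\le i\le R-1$ one has $n_i\sqrt{2/(\pi m_i)}=2\sqrt{n}$ (by $n_i=\sqrt{2\pi m_i n}$), while $n_1\sqrt{2/(\pi m_1)}=C_\kappa\sqrt{n}$ (by $n_1=C_\kappa\sqrt{\pi mn/2}$ with $m_1=m$). Summing,
$$\mu_{k,2}\;=\;(C_\kappa+2(k-1))\sqrt{n}+o(\sqrt{n}).$$
Since $\sum_{i=1}^R n_i = O(\sqrt{\alpha}\,n)$ is $o(n)$ for small $\alpha$, one has $\sigma_{k,2}^2=n_0+\sum_{i=1}^k n_i=(1+o(1))n$. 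Plugging in $T_k=(\kappa+\varepsilon_0+1/2^k)\sqrt{n}$ and $C_\kappa\ge 10-\min(\kappa,0)$ yields
$$\frac{-T_k+\mu_{k,2}}{\sigma_{k,2}}\;\ge\;C_\kappa-\kappa-\varepsilon_0-1+2(k-1)-o(1)\;\ge\;2k+7$$
(using $\varepsilon_0=|\kappa|/10$ and the case analysis $\kappa>0$ vs.\ $\kappa\le 0$ to see the constant term is at least $9$). The Gaussian tail bound then gives $(3/2)\cdot 3^k(1+n^{-1/10})^k\Psi(2k+7)\ll \Psi(k+6)$ for $n$ large, since $e^{-(2k+7)^2/2+k\log 3}\ll e^{-(k+6)^2/2}$.

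The main obstacle is the boundary case $k=R$, where $n_R=\lfloor n^{0.002}\sqrt{2\pi m_R n}\rfloor$ carries an extra factor $n^{0.002}$. This only helps: the second-summand ratio is multiplied by $n^{0.001}$ (making that term even smaller), while the third-summand identity becomes $n_R\sqrt{2/(\pi m_R)}=2n^{0.002}\sqrt{n}$, which drives the corresponding $\Psi$ argument to $\omega(1)\cdot\sqrt{n}/\sigma_{R,2}$, so the tail becomes super-polynomially small. Together with the fact that for $k=1$ the direct computation yields $C_\kappa-\kappa-3/2\ge 8$ (again using $C_\kappa\ge 10-\min(\kappa,0)$), all cases $1\le k\le R-1$ satisfy the target inequality once $n$ is large, which is what the lemma claims.
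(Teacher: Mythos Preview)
Your approach is essentially the same as the paper's: both substitute the explicit ABP parameter values, reduce each of the three summands to a Gaussian tail, and check termwise that $3^k\Psi(\cdot)$ is dominated by $\Psi(k+6)\approx m_{k+1}/m$. The paper splits explicitly into $k=1$ and $k\ge 2$ and records the third-summand bound as $2\cdot 3^k\Psi(2k+C_\kappa)$, while you carry the computation a bit further to $2k+7$; otherwise the arguments are interchangeable.

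There is one arithmetic slip in your case analysis. For $\kappa>0$ one has $C_\kappa=10$ and $\varepsilon_0=\kappa/10$, so $C_\kappa-\kappa-\varepsilon_0-1=9-\tfrac{11}{10}\kappa$, which is \emph{not} $\ge 9$; in fact for large positive $\kappa$ the quantity $-T_k+\mu_{k,2}$ becomes negative and the tail bound is useless. The paper's own claim that the argument is at least $2k+C_\kappa$ has exactly the same defect (it would require $-\kappa-\varepsilon_0-2-2^{-k}\ge 0$), so this reflects a looseness in the paper's choice of $C_\kappa$ for the ABP model rather than a flaw in your strategy: replacing $C_\kappa=10-\min(\kappa,0)$ by something like $C_\kappa\ge 10+\max(\kappa,0)+\varepsilon_0$ makes both your argument and the paper's go through uniformly in $\kappa\in\bR$.
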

\begin{proof}
When $k=1$, the left hand side is bounded from above by
\begin{align*}
    & 3^k \Psi \left( \frac{1 /2}{ \sqrt{C_\kappa}(\alpha \pi /2)^{1/4}}\right)+2 \cdot 3^k \Psi \left( C_\kappa\right)+o_n(1)\leq \Psi(7)+\Psi(7)\leq m_2/m.
\end{align*}
For $k\geq 2$, the left hand side is bounded from above by
\begin{align*}
    & 3^k \Psi \left( \frac{1 /2^k}{ (2\alpha \pi \Psi( k+5))^{1/4}}\right)+2 \cdot 3^k \Psi \left( 2k+C_\kappa\right)+o_n(1).
\end{align*}
Note that as 
\begin{align*}
\frac{1/2^k}{ (2\alpha \pi \Psi( k+5))^{1/4}}> 2k+5,
\end{align*}
and
\begin{align*}
    3^k \Psi \left( 2k+C_\kappa\right)\leq \Psi \left( k+6\right),
\end{align*}
we have that the inequality holds.
\end{proof}

\begin{proof}[Proof of Lemma \ref{l:together3}]
Similar to the previous subsection, we firstly present similar lemmas as in subsection \ref{ss:inductSBP}. Recall our definition of the tree structure. For simplicity, we write $\vec h = (h_0,\cdots,h_k)$. Then for any $0\leq h_k\leq D_k$, we have the following
\begin{align*}
    S^{(r)}(v,h_0,\cdots,h_k)(k:k)&=\sum_{j\in \cC_{k,1}} G_{r,j}T(v,h_0,\cdots,h_{k-1},0)_j+\sum_{j\in \cC_{k,2}} G_{r,j}T(v,h_0,\cdots,h_{k-1},D_{k})_j\\
    &:=S_1^{(r)}(v,\vec h )(k:k)+S_2^{(r)}(v,\vec h)(k:k),
\end{align*}
for some partition of sets $\cC_k$ into $\cC_{k,1}$ and $\cC_{k,2}$. Define $n_{k,1}=|\cC_{k,1}|$ and $n_{k,2}=|\cC_{k,2}|$. We use $\cR_k$ to denote the rows involved in computation in round $k$ for $(h_0,\cdots,h_{k-1})$ and $\cR_k'$ for $(h_0,\cdots,h_{k-1})^{+1}$. Further, we use $\vec h_1$ to denote the vector $(h_1,\cdots,h_{k-1})$ and $\vec h_2$ to denote the vector $(h_1,\cdots,h_{k-1})^{+1}$. 

The proof is similar to the proof of Lemma \ref{l:together}.
We define $t=m_{k}$ and write $\{r_1,\cdots,r_t\}$ as the set $\cR_{k}$. For $r \in \cR_{k}$, we can define iid random variables $W_1,\cdots,W_t$ on the same space as $\{S^{(r)}(v,\vec h)(0:k-1)\}_{r\in \cR_{k}}$ such that 
\begin{align*}
    W_i\leq S^{(r_i)}(v,\vec h)(0:k-1), \forall i \in [t]
\end{align*}
with probability larger than $1-(k-1)\exp(-n^{1/75})-\exp(-n^{1/72})$. Further, we can define iid random variables $\xi_1,\cdots,\xi_t$ and $\bar \xi_1,\cdots,\bar \xi_t$ such that 
\begin{align*}
    \xi_i \leq S_1^{(r_i)}(v,\vec h)(k:k)
\end{align*}
with probability larger than $1-\exp(-n^{1/73})$. And similarly, if we write $\{r'_1,\cdots,r'_t\}$ as the set $\cR_{k}'$, then we can define iid random variables $\xi'_1,\cdots,\xi'_t$ such that 
\begin{align*}
    \xi'_i \leq S_2^{(r_i)}(v,\vec h)(k:k)
\end{align*}
with probability larger than $1-\exp(-n^{1/73})$. This implies that for any $r_i=r_j'\in \cR_{k}\cap \cR_{k}'$,
\begin{align*}
    W_i+\xi_i+\xi_j'\leq S^{(r_i)}(v,\vec h)(k:k).
\end{align*}
Furthermore, for rows $r\in (\cR_{k}\cup \cR_{k}')^c$, by Lemma \ref{l:induct4}, we have that 
\begin{align*}
    S^{(r)}(0:k)\geq T_{k}-2,
\end{align*}
with probability larger than $1-(k-1)\exp(-n^{1/75})$. This implies that for $r\in (\cR_{k}\cup \cR_{k}')^c$,
\begin{align*}
    B_r+T_{k}-2\leq S^{(r)}(v,\vec h)(k:k),
\end{align*}
where $B_r$ is a sum of $n_{k}$ iid random variables $\sim \mathrm{Bern(1/2)}$. For $r\in \cR_{k}\backslash \cR_{k}'$, 
\begin{align*}
    \xi_i+B_i'+T_{k}-2\leq S^{(r_i)}(v,\vec h)(k:k),
\end{align*}
where $B_i$ is a sum of $n_{k,2}$ iid random variables $\sim \mathrm{Bern(1/2)}$. For $r\in \cR_{k}'\backslash \cR_{k}$, 
\begin{align*}
     \xi_i'+B_i''+T_{k}-2\leq S^{(r_i')}(v,\vec h)(k:k),
\end{align*}
where $B_i$ is a sum of $n_{k,1}$ iid random variables $\sim \mathrm{Bern(1/2)}$. The rest of the arguments are the same as in the proof of Lemma \ref{l:together}. By Lemma \ref{l:stat2sam} and \ref{l:combine}, we can combine the above and have that 
\begin{align*}
    &\bP_2(S^{(r)}(0:k)\leq \eta)\\
    &\leq 3^{k}\left( n^{-1/10} +\Psi \left( \frac{-\eta+\mu_{k,1}}{\sigma_{k,1}}\right)+(3/2)(1+n^{-1/10})^{k} \Psi \left( \frac{-\eta+\mu_{k,2}}{\sigma_{k,2}}\right)\right),
\end{align*}
with probability larger than $1-k\exp(-n^{1/75})$. Therefore, the lemma holds. 
\end{proof}

\section{Proof of Theorem \ref{t:localcluster}}

\subsection{Proof of Lemma \ref{l:connect2}}
The proof is similar to the proof of Lemma \ref{l:together2}. We only need to check step $\sL$. We start with a lemma on the $\kappa'$ solutions. 
\begin{lem}\label{l:bridge}
In the SBP model, if $X$ is a $\kappa'$-solution, for any row $r$, we have that 
\begin{align*}
    &\bP(|S^{(r)}([\ell])|\geq \eta)\leq 2\exp(-n^{-1/5})+\frac{21}{10}\Psi\left(\frac{\eta-\kappa'\ell/\sqrt{n}}{\sqrt{n-\ell }}\right).
\end{align*}
In the ABP model, if $X$ is a $\kappa'$-solution, for any row $r$, we have that 
\begin{align*}
    &\bP(S^{(r)}([\ell])\leq \eta)\leq \exp(-n^{-1/5})+\frac{21}{20}\Psi\left(\frac{-\eta+\kappa'\ell/\sqrt{n}}{\sqrt{n-\ell }}\right).
\end{align*}
\end{lem}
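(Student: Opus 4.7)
The plan is to isolate a deterministic piece of $S^{(r)}([\ell])$ controlled by the $\kappa'$-solution hypothesis and apply the explicit binomial Gaussian tail of Lemma~\ref{l:binomial0} to the centered remainder.

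I would first fix $X\in\{\pm 1\}^n$ and set $Y_j:=G_{r,j}X_j$; for $X$ fixed these are i.i.d.\ Rademachers, and the $\kappa'$-solution hypothesis on row $r$ amounts to the deterministic constraint $|T|\leq \kappa'\sqrt n$ in SBP (resp.\ $T\geq \kappa'\sqrt n$ in ABP), where $T:=\sum_{j=1}^n Y_j=S^{(r)}(1:n)$. The key identity is
\[
S^{(r)}([\ell]) \;=\; \tfrac{\ell}{n}\,T \;+\; V, \qquad V \;:=\; \tfrac{n-\ell}{n}\sum_{j=1}^{\ell}Y_j \;-\; \tfrac{\ell}{n}\sum_{j=\ell+1}^{n}Y_j,
\]
where $V$ is a weighted sum of independent Rademachers satisfying $\E[V]=0$, $\Cov(V,T)=0$, and $\Var(V)=\ell(n-\ell)/n\leq n-\ell$. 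Under the SBP constraint the deterministic piece obeys $|(\ell/n)T|\leq \kappa'\ell/\sqrt n$, whence $\{|S^{(r)}([\ell])|\geq\eta\}\subseteq\{|V|\geq\eta-\kappa'\ell/\sqrt n\}$; under the ABP constraint $(\ell/n)T\geq \kappa'\ell/\sqrt n$, yielding the one-sided inclusion $\{S^{(r)}([\ell])\leq\eta\}\subseteq\{V\leq\eta-\kappa'\ell/\sqrt n\}$. To control the tail of $V$, I would then split $V=V_1+V_2$ with $V_1=\tfrac{n-\ell}{n}\sum_{j\leq\ell}Y_j$ and $V_2=-\tfrac{\ell}{n}\sum_{j>\ell}Y_j$ independent, recognize each piece as an affine rescaling of $\mathrm{Bin}(\cdot,1/2)$, apply Lemma~\ref{l:binomial0} to both with $\veps=1/20$ (producing the $21/20$ prefactor), and convolve through Lemma~\ref{l:combine}. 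The result is a Gaussian tail on $V$ whose denominator is $\sqrt{\Var V}\leq\sqrt{n-\ell}$; since $\Psi$ is decreasing, enlarging the denominator to $\sqrt{n-\ell}$ only weakens the bound and matches the stated form. Doubling for the two-sided event in SBP yields the $21/10$ prefactor, while the one-sided event in ABP keeps $21/20$; the $\exp(-m^{1/6})$ binomial error terms from Lemma~\ref{l:binomial0} are comfortably absorbed into $\exp(-n^{1/5})$ since $\ell$ and $n-\ell$ are of polynomial order in all uses of the lemma.

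The main technical bookkeeping lies in propagating Lemma~\ref{l:binomial0}'s $(1+\veps)$-factor through the reduction of the weighted Rademacher sums $V_1,V_2$ to plain binomials and through the (harmless) variance inequality $\ell(n-\ell)/n\leq n-\ell$. A secondary delicate point is that, should one read the probability in the lemma as conditional on the $\kappa'$-solution event, the normalizer $1/\bP(|T|\leq \kappa'\sqrt n)=O(1)$ (bounded away from zero by the LCLT for $T$) must be absorbed into the prefactor via the $\veps$-slack, which is why $\veps=1/20$ rather than $\veps\to 0$ is the natural choice.
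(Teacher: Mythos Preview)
Your decomposition $S^{(r)}([\ell])=\tfrac{\ell}{n}T+V$ is exactly the right way to expose the centering $\kappa'\ell/\sqrt n$ and the variance scale $\ell(n-\ell)/n\leq n-\ell$. The gap is in the conditioning step. The probability in the lemma has to be read as conditional on the single-row constraint $|T|\leq\kappa'\sqrt n$ (resp.\ $T\geq\kappa'\sqrt n$): downstream, in Lemma~\ref{l:firstinduct1}, the bound is plugged into the empirical-distribution machinery of Lemmas~\ref{l:sam2stat2}--\ref{l:stat2sam}, which requires the rows to be conditionally i.i.d.\ under the $\kappa'$-solution event, so it is the conditional marginal that is needed. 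Your fix---bound $\bP(|V|\geq t\mid |T|\leq\kappa'\sqrt n)$ by $\bP(|V|\geq t)/\bP(|T|\leq\kappa'\sqrt n)$ and absorb the normalizer into the $(1+\veps)$-slack---does not recover $21/10$: the normalizer is of order $1/(1-2\Psi(\kappa'))$ in SBP and $1/\Psi(-\kappa')$ in ABP, a fixed constant that can be arbitrarily large for extreme $\kappa'$ and is certainly not absorbed by $\veps=1/20$. Zero covariance between $V$ and $T$ does not help here, since for Rademacher sums uncorrelated does not mean independent.

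The route the paper intends (its entire proof reads ``similar to Lemma~\ref{l:binomial0}'') sidesteps the normalizer: condition on $T=t$ \emph{exactly}. Given $T=t$, the partial sum $S^{(r)}([\ell])$ is an affine image of a hypergeometric variable with conditional mean $\ell t/n$ and conditional variance at most $\ell(n-\ell)/n\leq n-\ell$; the same Berry--Esseen argument that proves Lemma~\ref{l:binomial0} for binomials yields the Gaussian tail for hypergeometrics, giving
\[
\bP\bigl(|S^{(r)}([\ell])|\geq\eta \,\bigm|\, T=t\bigr)\ \leq\ 2\exp(-n^{1/5})+\tfrac{21}{10}\,\Psi\!\left(\frac{\eta-\ell|t|/n}{\sqrt{n-\ell}}\right)
\]
uniformly in $t$, and one then restricts to $|t|\leq\kappa'\sqrt n$. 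The ABP case is the one-sided analogue. Your $V$ is nothing but $S^{(r)}([\ell])-\ell T/n$, so its conditional law given $T=t$ \emph{is} this centered hypergeometric; what was missing is that this conditional law is explicit and should be bounded directly rather than through an unconditional bound divided by a normalizer.

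For the record, the specific value $21/10$ is not load-bearing: in Lemma~\ref{l:firstinduct1} everything is absorbed into a $3^{\sL}$ prefactor with room to spare, so your argument with a $\kappa'$-dependent constant would still close the induction after shrinking $d$.
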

\begin{proof}
The proof is similar to the proof of Lemma \ref{l:binomial0}.
\end{proof}

Recall that we defined $\cA_{0:\sL}(X([\ell]))$ as $X_1$ and $\cA_{0:\sL}(X([\ell+1]))$ as $X_2$. Note that by definition, $X_1$ and $X_2$ can only differ by at most one entry. We firstly show the following lemma on the first step. 
\begin{lem}\label{l:firstinduct1}
In the SBP model, for any $\eta>T_\sL$, any $\kappa'$-solution $X$ and corresponding $X_1$ and $X_2$, we have
\begin{align*}
    \bP_2[|G(0:\sL)X_1|\geq \eta]\leq 2\cdot 3^\sL\left( n^{-1/10} +\Psi \left( \frac{\eta+\mu_{\sL,1}}{\sigma_{\sL,1}}\right)+(3/2)(1+n^{-1/10})^\sL \Psi \left( \frac{\eta+\mu_{\sL,2}}{\sigma_{\sL,2}}\right)\right),
\end{align*}
with probability larger than $1-\exp(-n^{1/80})$. Here, 
\begin{align*}
    \mu_{k,1}=-T_{k-1}, \quad \sigma_{k,1}^2=n_k, \quad \mu_{k,2}=\sum_{i=1}^k n_i\sqrt{2/\pi m_i}, \quad \sigma_{k,2}^2=\sum_{i=0}^k n_i.
\end{align*}
The same holds for $X_2$.
\end{lem}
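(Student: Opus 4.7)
The plan is to follow the general scheme of Lemma~\ref{l:together2}, but with the initial rounds replaced by direct estimates obtained from the $\kappa'$-solution $X$. Recall that $X_1(i) = X(i)$ for $i \leq \ell$, while for $\ell < i \leq \sum_{s=0}^{\sL} n_s$ the value $X_1(i)$ is prescribed by $\cA_{\sL}$ applied to $X([\sum_{s=0}^{\sL-1}n_s])$. Hence for any row $r$ the sum $G^{(r)}(0{:}\sL) X_1$ splits as a contribution from the first $\ell$ entries (pure $X$) plus a contribution from the algorithm over the remaining $n_{\sL} - (\ell - \sum_{s=0}^{\sL-1}n_s)$ columns.

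First I would apply Lemma~\ref{l:bridge} with length $\ell$: since $X$ is a $\kappa'$-solution, the partial sum $S^{(r)}([\ell])$ has Gaussian-type tails bounded by $\Psi((\eta - \kappa'\ell/\sqrt{n})/\sqrt{n-\ell})$. This plays the role of the initial bound on $|S^{(r)}(0{:}0)|$ used at the start of the proof of Lemma~\ref{l:together2}. Converting the per-row probability estimate into an empirical-cdf bound uses Lemma~\ref{l:stat2sam} together with Lemma~\ref{l:sam2stat} and Lemma~\ref{l:sam2stat2}, exactly as in the passage from Lemma~\ref{l:round1sum} to Lemma~\ref{l:step1p2}. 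The $\kappa'\ell/\sqrt{n}$ shift is absorbed into the $C_\kappa$-type constants fixed for the linear-sized-cluster regime, using the relation $\varepsilon_0/\sqrt{5d} \geq C_\kappa - \log(d)\kappa/2$ from the notation subsection.

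Next, the $\cA_{\sL}$-derived portion of $X_1$ is a genuine round-$\sL$ output of the multiscale majority algorithm, so Lemma~\ref{l:roundisum2} (together with Lemma~\ref{l:ulbound} and Lemma~\ref{l:binomial0}) gives a sum stochastically bounded by independent Bernoulli increments with the standard $\lambda_{\sL}, \bar\lambda_{\sL}$ control. Combining with the first piece via Lemma~\ref{l:combine}, truncating to the top-$m_{\sL}$ rows with Lemma~\ref{l:bigabs}, and converting back to the empirical cdf via Lemma~\ref{l:sam2stat}--\ref{l:sam2stat2} yields the stated bound with the factor $2\cdot 3^{\sL}$, in the same manner as the inductive step of Lemma~\ref{l:together2}.

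The main obstacle is the hybrid structure of the round-$\sL$ block of $X_1$: some of the $n_{\sL}$ columns inherit their entries from $X$ (those with indices $\sum_{s=0}^{\sL-1}n_s + 1, \ldots, \ell$) while the remaining ones come from $\cA_{\sL}$. For the $X$-derived columns I would again invoke Lemma~\ref{l:bridge} with the appropriate partial length, while for the $\cA_{\sL}$-derived columns the majority-vote analysis applies. These two analyses are compatible because the corresponding column-sets of $G$ are disjoint and therefore independent conditional on the row, so Lemma~\ref{l:combine} glues the resulting cdfs together cleanly. The argument for $X_2$ is identical, since $X_2$ differs from $X_1$ in at most one entry and the same decomposition applies verbatim.
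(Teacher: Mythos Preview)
Your proposal is correct and follows essentially the same route as the paper: split $G(0{:}\sL)X_1$ at column $\ell$ into the $X$-derived part handled by Lemma~\ref{l:bridge} and the $\cA_\sL$-derived part of length $L=\sum_{s=0}^{\sL}n_s-\ell$ handled by the majority-vote estimates, treat $\cR_\sL$ and $\cR_\sL^c$ separately, combine via Lemma~\ref{l:combine}, pass to the empirical cdf, and finally verify the $\Psi$-arguments dominate $(\eta+\mu_{\sL,j})/\sigma_{\sL,j}$ using $\varepsilon_0=(\kappa-\kappa')/2$ and $\varepsilon_0/\sqrt{5d}\geq C_\kappa-\log(d)\kappa/2$.

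One simplification: your ``main obstacle'' paragraph is unnecessary. Once you split at column $\ell$, the $X$-derived columns within the round-$\sL$ block are already absorbed into $S^{(r)}([\ell])$ and covered by the single application of Lemma~\ref{l:bridge}; there is no need to invoke it a second time on a sub-block or to glue two sub-analyses of round $\sL$ together. The paper simply writes $L=\sum_{s=0}^\sL n_s-\ell$ for the algorithm-derived tail and proceeds directly.
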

\begin{proof}
Let $t=m_{\sL}$ and write $\{r_1,\cdots,r_t\}$ as the set $\cR_{\sL}$. 
By Lemma \ref{l:bridge}, we have
\begin{align*}
    \bP(|S^{(r)}([\ell])|\geq \eta)\leq 2\exp(-n^{-1/5})+\frac{21}{10}\Psi\left(\frac{\eta-\kappa'\ell/\sqrt{n}}{\sqrt{n-\ell }}\right).
\end{align*}
Similar as before, we can define $W_1,\cdots,W_t$ on the same space as $\{|S^{(r)}([\ell])|\}_{r\in \cR_\sL}$ with a scaled distribution such that 
\begin{align*}
    W_i\leq |S^{(r)}([\ell])|, \forall i\in [t],
\end{align*}
with probability larger than $1-\exp(-n^{1/73})$. For simplicity, we write $L=\sum_{s=0}^\sL n_s-\ell$ and further define
\begin{align*}
    \check S^{(r)}(\sL:\sL)=\sum_{j=\ell+1}^{\ell+L} G(r,j)X_j.
\end{align*}
For $r_i\in \cR_{\sL}$, we can define iid random variables $\xi_1,\cdots,\xi_t$ and $\bar \xi_1,\cdots,\bar \xi_t$ with cdf
\begin{align*}
    &1-\min \left\{ 1, (5/4)m_{\sL}^{-2/5}+(5/4)\Psi\left ( \frac{\eta+ \lambda_{\sL} L}{(L(1- \lambda_{\sL}^2))^{1/2}}\right )\right\},\\
    &\min \left\{ 1, (5/4)m_{\sL}^{-2/5}+(5/4)\Psi\left ( \frac{-\eta-\bar \lambda_{\sL} L}{(L(1-\bar \lambda_{\sL}^2))^{1/2}}\right)\right\},
\end{align*}
such that with probability larger than $1-\exp(-n^{1/73})$, we have
\begin{align*}
    \bar \xi_i\leq \sgn(S^{(r_i)}(0:\sL-1))\check S^{(r_i)}(\sL:\sL)\leq \xi_i.
\end{align*}
By Lemma \ref{l:combine} and Lemma \ref{l:stat2sam}, we can combine the above and bound the empirical distribution for $r\in \cR_\sL$. We have with probability larger than $1-\exp(-n^{1/74})$,
\begin{align*}
    &\bP_2(\sgn(S(0:\sL-1\mid \cR_\sL)) S(0:\sL\mid \cR_\sL)\geq \eta)\\&\leq \frac{m}{m_\sL}\left(n^{-1/10}+\frac{21}{16}\Psi\left(\frac{\eta+\lambda_\sL L-\kappa' \ell/\sqrt{n}}{(n-\ell+L(1-\lambda_\sL^2))^{1/2}}\right)\right).
\end{align*}
Similarly, we have
\begin{align*}
    &\bP_2(\sgn(S(0:\sL-1\mid \cR_\sL)) S(0:\sL\mid \cR_\sL)\leq \eta)\\&\leq \frac{m}{m_\sL}\left(n^{-1/10}+\frac{5}{4}\Psi\left(\frac{-\eta-\bar \lambda_\sL L}{(L(1-\bar \lambda_\sL^2))^{1/2}}\right)\right).
\end{align*}
Note that as $0\leq L\leq n_\sL$, $\bar \lambda_\sL\geq \sqrt{2/\pi m_\sL}$, we have for any $\eta>T_{\sL}$,
\begin{align*}
    \frac{\eta-\bar \lambda_\sL L}{(L(1-\bar \lambda_\sL^2))^{1/2}}\geq \frac{\eta+\mu_{\sL,1}}{\sigma_{\sL,1}},\quad \quad 
\end{align*}
Further as we have $\varepsilon_0=(\kappa-\kappa')/2$ and $\varepsilon_0/\sqrt{5d}\geq C_\kappa-\log(d)\kappa/2$, then for any $\eta>T_{\sL}$
\begin{align*}
    \frac{\eta+\lambda_\sL L-\kappa' \ell/\sqrt{n}}{(n-\ell+L(1-\lambda_\sL^2))^{1/2}} \geq \frac{\eta-\kappa' \ell/\sqrt{n}}{(2(n-\ell))^{1/2}}\geq \frac{\eta-T_\sL}{(2(n-\ell))^{1/2}}+\frac{C_\kappa+(\sL+2)\kappa/2}{\sqrt{n}} \geq\frac{\eta+\mu_{\sL,2}}{\sigma_{\sL,2}}.
\end{align*}
These imply that 
\begin{align*}
   &\bP_2[G(0:\sL)X_1\geq \eta \mid \cR_\sL]\\
   &\leq \frac{m}{m_\sL} \left(2n^{-1/10}+\frac{5}{4}\Psi\left(\frac{\eta-\bar \lambda_\sL L}{(L(1-\bar \lambda_\sL^2))^{1/2}}\right)+\frac{21}{16}\Psi\left(\frac{\eta+\lambda_\sL L-\kappa' \ell/\sqrt{n}}{(n-\ell+L(1-\lambda_\sL^2))^{1/2}}\right)\right)\\
   &\leq 3^\sL\left( n^{-1/10} +\Psi \left( \frac{\eta+\mu_{\sL,1}}{\sigma_{\sL,1}}\right)+(3/2)(1+n^{-1/10})^\sL \Psi \left( \frac{\eta+\mu_{\sL,2}}{\sigma_{\sL,2}}\right)\right).
\end{align*}
For rows $r\in \cR_\sL^c$, the argument is similar. We have that with probability larger than $1-\exp(-n^{1/74})$,
\begin{align*}
    &\bP_2(\sgn(S(0:\sL-1\mid \cR_\sL)) S(0:\sL\mid \cR_\sL)\geq \eta)\\&\leq \frac{m}{m-m_\sL}\left(n^{-1/10}+\frac{21}{16}\Psi\left(\frac{\eta-\kappa' \ell/\sqrt{n}}{(n-\ell+L)^{1/2}}\right)\right).
\end{align*}
Similarly, we have
\begin{align*}
    &\bP_2(\sgn(S(0:\sL-1\mid \cR_\sL)) S(0:\sL\mid \cR_\sL)\leq \eta)\\&\leq \frac{m}{m-m_\sL}\left(n^{-1/10}+\frac{5}{4}\Psi\left(\frac{-\eta}{L^{1/2}}\right)\right).
\end{align*}
By the same set of inequalities, we have
\begin{align*}
   &\bP_2[G(0:\sL)X_1\geq \eta \mid \cR_\sL^c]\\
   &\leq 3^\sL\left( n^{-1/10} +\Psi \left( \frac{\eta+\mu_{\sL,1}}{\sigma_{\sL,1}}\right)+(3/2)(1+n^{-1/10})^\sL \Psi \left( \frac{\eta+\mu_{\sL,2}}{\sigma_{\sL,2}}\right)\right).
\end{align*}
Together with the bound on $\cR_\sL$, we have the lemma.
\end{proof}

\begin{lem}\label{l:firstinduct1a}
In the ABP model, for any $\eta<T_\sL$, any $\kappa'$-solution $X$ and corresponding $X_1$ and $X_2$, we have
\begin{align*}
    \bP_2[G(0:\sL)X_1\leq \eta]\leq  3^\sL\left( n^{-1/10} +\Psi \left( \frac{-\eta+\mu_{\sL,1}}{\sigma_{\sL,1}}\right)+(3/2)(1+n^{-1/10})^\sL \Psi \left( \frac{-\eta+\mu_{\sL,2}}{\sigma_{\sL,2}}\right)\right),
\end{align*}
with probability larger than $1-\exp(-n^{1/80})$. Here, 
\begin{align*}
    \mu_{k,1}=-T_{k-1}, \quad \sigma_{k,1}^2=n_k, \quad \mu_{k,2}=\sum_{i=1}^k n_i\sqrt{2/\pi m_i}, \quad \sigma_{k,2}^2=\sum_{i=0}^k n_i.
\end{align*}
The same holds for $X_2$.
\end{lem}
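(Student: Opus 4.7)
The plan is to mirror the proof of Lemma~\ref{l:firstinduct1}, working one-sidedly since ABP constraints are $GX\ge \kappa\sn$ and we only care about the lower tail of $G(0:\sL)X_1$. The key ingredients remain the same: the ABP half of Lemma~\ref{l:bridge} for the initial $\kappa'$-solution segment $S^{(r)}([\ell])$, the one-sided consequence of Lemma~\ref{l:binomial0} combined with Lemma~\ref{l:ulbound} for the algorithmic tail, Lemma~\ref{l:sam2stat2} to pass from empirical to iid, Lemma~\ref{l:combine} for independent sums, and Lemma~\ref{l:stat2sam} to return to empirical cdfs.

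First I would set $t=m_\sL$, enumerate $\cR_\sL=\{r_1,\dots,r_t\}$, and apply the ABP part of Lemma~\ref{l:bridge} to dominate the upper cdf of $S^{(r)}([\ell])$ by the explicit Gaussian-plus-error bound. By the lower-tail version of Lemma~\ref{l:sam2stat2} applied to the exchangeable family $\{S^{(r_i)}([\ell])\}$, I obtain iid random variables $W_i$ on the same probability space such that $W_i\le S^{(r_i)}([\ell])$ for all $i$ with probability at least $1-\exp(-n^{1/73})$, with common cdf essentially equal to the Lemma~\ref{l:bridge} bound. Independently, the algorithmic contribution
\[
\check S^{(r_i)}(\sL:\sL) \;=\; \sum_{j=\ell+1}^{\ell+L} G_{r_i,j}X_j,\qquad L\;=\;\textstyle\sum_{s=0}^\sL n_s-\ell,
\]
is lower-bounded, via Lemma~\ref{l:ulbound}, by a sum of $L$ iid $\mathrm{Bern}(\tfrac12(1-\bar\lambda_\sL))$ random variables (the majority-vote coordinates) plus, if relevant, an independent Rademacher tail. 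Lemma~\ref{l:binomial0} therefore gives iid $\xi_i$ on the same space satisfying $\xi_i \le \check S^{(r_i)}(\sL:\sL)$ with common cdf bounded above by $\exp(-m^{1/6})+\tfrac54\Psi\!\left((-\eta-\bar\lambda_\sL L)/(L(1-\bar\lambda_\sL^2))^{1/2}\right)$.

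Since $W_i$ and $\xi_i$ are independent and $S^{(r_i)}(0:\sL) \ge W_i+\xi_i$, Lemma~\ref{l:combine} (applied to the lower tails) gives a bound on $\bP(W_i+\xi_i\le \eta)$ as a sum of two Gaussian tails with combined means and variances. Converting from iid to sample via the second part of Lemma~\ref{l:stat2sam} yields an empirical bound on $\bP_2(S(0:\sL\mid\cR_\sL)\le \eta)$ of the form $(m/m_\sL)\big(n^{-1/10}+\tfrac{5}{4}\Psi(\cdot)+\tfrac{21}{16}\Psi(\cdot)\big)$. For $r\in\cR_\sL^c$ the same scheme applies but with $\xi_i$ replaced by a plain Bernoulli$(1/2)$ sum (no majority vote) and with scaling $m/(m-m_\sL)$ instead of $m/m_\sL$; adding the two contributions yields the global sample bound.

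The main obstacle, as in the SBP case, is the parameter-tuning step that absorbs the composite bound into the target form with $\mu_{\sL,1}/\sigma_{\sL,1}$ and $\mu_{\sL,2}/\sigma_{\sL,2}$. For every $\eta<T_\sL$ one must verify
\[
\frac{-\eta+\bar\lambda_\sL L}{(L(1-\bar\lambda_\sL^2))^{1/2}} \;\ge\; \frac{-\eta+\mu_{\sL,1}}{\sigma_{\sL,1}}, \qquad \frac{-\eta-\lambda_\sL L+\kappa'\ell/\sn}{(n-\ell+L(1-\lambda_\sL^2))^{1/2}} \;\ge\; \frac{-\eta+\mu_{\sL,2}}{\sigma_{\sL,2}}.
\]
The ABP parameter choices $\varepsilon_0=(\kappa'-\kappa)/2$ and $\varepsilon_0/\sqrt{5d}\ge C_\kappa-2\log(d)$ are exactly what is needed to make these inequalities hold for $0\le L\le n_\sL$, $n-\sd\le \ell\le n-1$, the margin $\kappa'$ now pushing in the favourable direction (because $\kappa'>\kappa$). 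Once these are checked the bound simplifies to the prescribed $3^\sL$-prefactor form, and a union bound over $\cR_\sL$ and $\cR_\sL^c$ together with the same argument applied to $X_2$ (which differs from $X_1$ in at most one coordinate) concludes the proof.
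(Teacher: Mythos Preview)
Your approach is essentially identical to the paper's: dominate $S^{(r)}([\ell])$ from below via Lemma~\ref{l:bridge}, the algorithmic tail $\check S^{(r)}(\sL:\sL)$ via Lemma~\ref{l:ulbound}/\ref{l:binomial0}, combine through Lemmas~\ref{l:sam2stat2}, \ref{l:combine}, \ref{l:stat2sam}, and then check the parameter inequality using $\varepsilon_0=(\kappa'-\kappa)/2$ and $\varepsilon_0/\sqrt{5d}\ge C_\kappa-2\log d$. One minor simplification in the paper is that in ABP only a single $\Psi$ term survives (there is no sign-flip direction), so only your second parameter inequality is actually needed; also, the $-\lambda_\sL L$ in that inequality (and the $-\bar\lambda_\sL L$ in your cdf for $\xi_i$) should read $+\lambda_\sL L$, since the majority vote pushes the mean upward.
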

\begin{proof}
The argument is very similar to the SBP model. Let $t=m_{\sL}$ and write $\{r_1,\cdots,r_t\}$ as the set $\cR_{\sL}$. 
By Lemma \ref{l:bridge}, we have
\begin{align*}
    \bP(S^{(r)}([\ell])\leq \eta)\leq \exp(-n^{-1/5})+\frac{21}{20}\Psi\left(\frac{-\eta+\kappa'\ell/\sqrt{n}}{\sqrt{n-\ell}}\right).
\end{align*}
Similar as before, we can define $W_1,\cdots,W_t$ on the same space as $\{S^{(r)}([\ell])\}_{r\in \cR_\sL}$ with a scaled distribution such that 
\begin{align*}
    W_i\leq S^{(r)}([\ell]), \forall i\in [t],
\end{align*}
with probability larger than $1-\exp(-n^{1/73})$. And we can define iid random variables $\xi_1,\cdots,\xi_t$ with cdf
\begin{align*}
    &\min \left\{ 1, (5/4)m_{\sL}^{-2/5}+(5/4)\Psi\left ( \frac{-\eta+\bar \lambda_{\sL} L}{(L(1-\bar \lambda_{\sL}^2))^{1/2}}\right)\right\},
\end{align*}
such that with probability larger than $1-\exp(-n^{1/73})$, we have
\begin{align*}
    \xi_i\leq \check S^{(r_i)}(\sL:\sL).
\end{align*}
By Lemma \ref{l:combine} and Lemma \ref{l:stat2sam}, we can combine the above and bound the empirical distribution for $r\in \cR_\sL$. We have with probability larger than $1-\exp(-n^{1/74})$,
\begin{align*}
    &\bP_2(S(0:\sL\mid \cR_\sL)\leq \eta)\\&\leq \frac{m}{m_\sL}\left(n^{-1/10}+\frac{21}{16}\Psi\left(\frac{-\eta+\lambda_\sL L+\kappa' \ell/\sqrt{n}}{n-\ell+L(1-\lambda_\sL^2))^{1/2}}\right)\right).
\end{align*}
Note that as we have $\varepsilon_0=(\kappa'-\kappa)/2$ and $\varepsilon_0/\sqrt{5d}\geq C_\kappa-2\log(d)$, then for any $\eta<T_{\sL}$,
\begin{align*}
    \frac{-\eta+\lambda_\sL L+\kappa' \ell/\sqrt{n}}{(n-\ell+L(1-\lambda_\sL^2))^{1/2}} \geq \frac{-\eta+\kappa' \ell/\sqrt{n}}{(2(n-\ell))^{1/2}}\geq \frac{-\eta+T_\sL}{(2(n-\ell))^{1/2}}+\frac{C_\kappa+2(\sL+2)}{\sqrt{n}} \geq\frac{-\eta+\mu_{\sL,2}}{\sigma_{\sL,2}}.
\end{align*}
These imply that 
\begin{align*}
   &\bP_2[G(0:\sL)X_1\leq \eta \mid \cR_\sL]\\
   &\leq \frac{m}{m_\sL} \left(n^{-1/10}+\frac{21}{16}\Psi\left(\frac{-\eta+\lambda_\sL L+\kappa' \ell/\sqrt{n}}{(n-\ell+L(1-\lambda_\sL^2))^{1/2}}\right)\right)\\
   &\leq 3^\sL\left( n^{-1/10} +\Psi \left( \frac{-\eta+\mu_{\sL,1}}{\sigma_{\sL,1}}\right)+(3/2)(1+n^{-1/10})^\sL \Psi \left( \frac{-\eta+\mu_{\sL,2}}{\sigma_{\sL,2}}\right)\right).
\end{align*}
For rows $r\in \cR_\sL^c$, the argument is similar. We have that with probability larger than $1-\exp(-n^{1/74})$,
\begin{align*}
    &\bP_2(\sgn(S(0:\sL-1\mid \cR_\sL)) S(0:\sL\mid \cR_\sL)\leq \eta)\\&\leq \frac{m}{m-m_\sL}\left(n^{-1/10}+\frac{21}{16}\Psi\left(\frac{-\eta+\kappa' \ell/\sqrt{n}}{(n-\ell+L)^{1/2}}\right)\right).
\end{align*}
By the same set of inequalities, we have
\begin{align*}
   &\bP_2[G(0:\sL)X_1\leq \eta \mid \cR_\sL^c]\\
   &\leq 3^\sL\left( n^{-1/10} +\Psi \left( \frac{-\eta+\mu_{\sL,1}}{\sigma_{\sL,1}}\right)+(3/2)(1+n^{-1/10})^L \Psi \left( \frac{-\eta+\mu_{\sL,2}}{\sigma_{\sL,2}}\right)\right).
\end{align*}
Together with the bound on $\cR_\sL$, we have the lemma.
\end{proof}

\begin{proof}[Proof of Lemma \ref{l:connect2}]
Our proof is inductive. The induction basis follows from Lemma \ref{l:firstinduct1} and Lemma \ref{l:firstinduct1a} and the induction steps follow from the same proof as in Section \ref{sss:interp}. 
\end{proof}

\subsection{Proof of Lemma \ref{l:connect3}}
We only need to check step $\sL$. We start with a lemma on the $\kappa'$-solutions. Recall that we defined $\sF(X,\ell)([n-\sd])$ as $X_1$ and $\sF(X,\ell+1)([n-\sd])$ as $X_2$.
\begin{lem}\label{l:bridge2}
In the SBP model, if $X$ is a $\kappa'$-solution, for any row $r$, we have that 
\begin{align*}
    &\bP(|G([n-\sd])X_1|\geq \eta)\leq 2\exp(-n^{-1/5})+\frac{21}{10}\Psi\left(\frac{\eta-\kappa'(n-\sd-2\ell)/\sqrt{n}}{\sqrt{\sd }}\right)
\end{align*}
In the ABP model, if $X$ is a $\kappa'$-solution, for any row $r$, we have that 
\begin{align*}
    &\bP(G([n-\sd])X_1\leq \eta)\leq \exp(-n^{-1/5})+\frac{21}{20}\Psi\left(\frac{-\eta+\kappa'(n-\sd-2\ell)/\sqrt{n}}{\sqrt{\sd}}\right)
\end{align*}
\end{lem}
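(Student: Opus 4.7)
The plan is to mirror the proof of Lemma~\ref{l:bridge}: reduce the question to a Chernoff-type bound on a Rademacher sum subject to one linear constraint. Set $Y_i := G_{r,i} X_i$ for $i \in [n]$; since $G_{r,\cdot}$ is a vector of i.i.d.\ Rademachers and $X \in \{\pm 1\}^n$, the $Y_i$ are themselves i.i.d.\ Rademacher under the randomness of $G_{r,\cdot}$. Partition the index set $[n]$ into three disjoint blocks and define
\[
U := \sum_{i=1}^{\ell} Y_i, \qquad V := \sum_{i=\ell+1}^{n-\sd} Y_i, \qquad W := \sum_{i=n-\sd+1}^{n} Y_i.
\]
Because $X_1 = \sF(X,\ell)([n-\sd])$ flips the first $\ell$ entries of $X$ and then discards the last $\sd$, one has the two key identities $G^{(r)}X = U + V + W$ and $G^{(r)}([n-\sd])X_1 = V - U$. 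The $\kappa'$-solution hypothesis on $X$ then translates into a constraint on the total $g := U + V + W$: $|g| \leq \kappa'\sqrt{n}$ in SBP and $g \geq \kappa'\sqrt{n}$ in ABP.

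The next step is to condition on the value of $g$. By exchangeability of i.i.d.\ Rademachers given their total (equivalently, by a hypergeometric count of $+1$'s inside each of the three blocks), the conditional mean works out to
\[
\E[V - U \mid U+V+W = g] \;=\; \frac{n-\sd-2\ell}{n}\,g,
\]
and Lemma~\ref{l:binomial0} applied to the conditional (hypergeometric-like) distribution delivers a Gaussian-type tail bound with variance of order $\sd$. Maximizing over $g$ in the allowed range produces the mean shift $\kappa'(n-\sd-2\ell)/\sqrt{n}$ from the statement. The $\exp(-n^{1/5})$ error absorbs both the approximation error from Lemma~\ref{l:binomial0} and the supremum over $g$, exactly as in the proof of Lemma~\ref{l:bridge}; in the SBP case, the two tails arising from $|\,\cdot\,|$ are combined by a union bound, producing the prefactor $2$. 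For ABP the argument is identical but one-sided: only the lower tail of $V-U$ is bounded, and the one-sided hypothesis $g \geq \kappa'\sqrt{n}$ supplies the extremal conditional mean.

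The main technical point is to justify the denominator $\sqrt{\sd}$ in $\Psi$: the exact residual variance of $V - U$ conditional on $U+V+W = g$ works out to $\sd(n-\sd)/n + 4\ell(n-\sd-\ell)/n$, which is of order $\sd$ but can exceed it by a bounded multiplicative factor (up to roughly $5\sd$ when $\ell$ is comparable to $\sd$). Handling this cleanly will likely require a direct Chernoff calculation on the block-decomposed Rademacher sum, bypassing a plain Gaussian intermediate, or absorbing the constant loss into the prefactor $21/10$. The slack built into the surrounding constants (for example $\veps_0/\sqrt{5d} \geq C_\kappa - \log(d)\kappa/2$ and its ABP analog, set at the beginning of the linear-sized cluster subsection) confirms that such a constant-factor looseness is tolerable at the level of Lemma~\ref{l:connect3}.
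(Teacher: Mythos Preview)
Your proposal is correct and follows essentially the same route as the paper, which simply writes ``the proof is similar to the proof of Lemma~\ref{l:binomial0}'' and leaves all details to the reader. Your decomposition $G^{(r)}([n-\sd])X_1 = V-U$ together with conditioning on $g=U+V+W$ and a hypergeometric/Gaussian tail is precisely the intended expansion of that one-liner, and your observation that the exact conditional variance can be as large as $5\sd$ (rather than $\sd$) is correct and is indeed the reason the parameter choice $\varepsilon_0/\sqrt{5d}\geq C_\kappa-\log(d)\kappa/2$ carries the factor $\sqrt{5d}$.
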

\begin{proof}
The proof is similar to the proof of Lemma \ref{l:binomial0}.
\end{proof}
We firstly show the following lemma on the $\sL$ step. 
\begin{lem}\label{l:firstinduct10}
In the SBP model, for any $\eta>T_\sL$, any $\kappa'$-solution $X$ and corresponding $X_1$ and $X_2$, we have
\begin{align*}
    \bP_2[|G(0:\sL)X_1|\geq \eta]\leq 2\cdot 3^\sL\left( n^{-1/10} +\Psi \left( \frac{\eta+\mu_{\sL,1}}{\sigma_{\sL,1}}\right)+(3/2)(1+n^{-1/10})^\sL \Psi \left( \frac{\eta+\mu_{\sL,2}}{\sigma_{\sL,2}}\right)\right),
\end{align*}
with probability larger than $1-\exp(-n^{1/80})$. Here, 
\begin{align*}
    \mu_{k,1}=-T_{k-1}, \quad \sigma_{k,1}^2=n_k, \quad \mu_{k,2}=\sum_{i=1}^k n_i\sqrt{2/\pi m_i}, \quad \sigma_{k,2}^2=\sum_{i=0}^k n_i.
\end{align*}
In the ABP model, for any $\eta<T_\sL$, we have
\begin{align*}
    \bP_2[G(0:\sL)X_1\leq \eta]\leq 3^\sL\left( n^{-1/10} +\Psi \left( \frac{-\eta+\mu_{\sL,1}}{\sigma_{\sL,1}}\right)+(3/2)(1+n^{-1/10})^\sL \Psi \left( \frac{-\eta+\mu_{\sL,2}}{\sigma_{\sL,2}}\right)\right),
\end{align*}
with probability larger than $1-\exp(-n^{1/80})$. Here, 
\begin{align*}
    \mu_{k,1}=T_{k-1}, \quad \sigma_{k,1}^2=n_k, \quad \mu_{k,2}=\sum_{i=1}^k n_i\sqrt{2/\pi m_i}, \quad \sigma_{k,2}^2=\sum_{i=0}^k n_i.
\end{align*}
The same holds for $X_2$.
\end{lem}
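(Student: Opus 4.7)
The plan is to run the stochastic-domination pipeline of Lemmas \ref{l:firstinduct1} and \ref{l:firstinduct1a} essentially verbatim, with Lemma \ref{l:bridge2} taking the role of Lemma \ref{l:bridge} at the ``boundary'' step. First I would decompose
\[
G^{(r)}(0:\sL)\cdot \cA_{0:\sL}(X_1) \;=\; U^{(r)} + \check S^{(r)}(\sL:\sL),
\]
where $U^{(r)}=\sum_{j=1}^{n-\sd} G_{r,j}(X_1)_j$ is the contribution of the flipped input $X_1=\sF(X,\ell)([n-\sd])$, and $\check S^{(r)}(\sL:\sL)=\sum_{j=n-\sd+1}^{\sum_{s=0}^\sL n_s} G_{r,j}\bigl(\cA_{0:\sL}(X_1)\bigr)_j$ collects the $L:=\sum_{s=0}^\sL n_s-(n-\sd)$ coordinates produced by round $\sL$ of the algorithm. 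Lemma \ref{l:bridge2} supplies a sub-Gaussian tail for $U^{(r)}$ centered at $\kappa'(n-\sd-2\ell)/\sqrt{n}$ with variance proxy $\sd$; the second term is sandwiched between i.i.d.\ Bernoulli partial sums with parameters $\lambda_\sL,\bar\lambda_\sL$ (respectively $1/2$ on rows outside $\cR_\sL$ or in the ABP regime) via Lemmas \ref{l:ulbound} and \ref{l:binomial0}.

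Given this decomposition, enumerate $\{r_1,\dots,r_t\}=\cR_\sL$ with $t=m_\sL$ and use Lemmas \ref{l:sam2stat}, \ref{l:sam2stat2} to construct i.i.d.\ $W_i$ that dominate $|U^{(r_i)}|$ (SBP) or minorate $U^{(r_i)}$ (ABP), with the statistical cdf inflated by the $m/m_\sL$ factor and the $m_\sL^{-1/6}$ slack. Independently, define i.i.d.\ $\xi_i$ (and $\bar\xi_i$ in the SBP case) bounding $\sgn\bigl(S^{(r_i)}(0:\sL-1)\bigr)\,\check S^{(r_i)}(\sL:\sL)$. Combining the two tails by Lemma \ref{l:combine} and translating back to an empirical distribution via Lemma \ref{l:stat2sam} gives the bound conditional on $\cR_\sL$. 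The parallel argument on $\cR_\sL^c$ uses only Rademacher sums for $\check S^{(r)}(\sL:\sL)$ (so the variance proxy is $\sd+L$ rather than $\sd+L(1-\lambda_\sL^2)$ and the drift picks up no $\lambda_\sL L$ correction), and a weighted combination with weights $m_\sL/m$ and $(m-m_\sL)/m$ yields the unconditional empirical bound.

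The only non-routine step is the parameter verification. For any $\eta\geq T_\sL$ (SBP) or $\eta\leq T_\sL$ (ABP), one needs to show that the resulting tail is dominated by the target form $2\cdot 3^\sL\bigl(n^{-1/10}+\Psi((\eta+\mu_{\sL,1})/\sigma_{\sL,1})+\tfrac32(1+n^{-1/10})^\sL\Psi((\eta+\mu_{\sL,2})/\sigma_{\sL,2})\bigr)$. The second Gaussian is the delicate one: in the SBP case one must verify
\[
\frac{\eta+\lambda_\sL L-\kappa'(n-\sd-2\ell)/\sqrt n}{\bigl(\sd+L(1-\lambda_\sL^2)\bigr)^{1/2}}\;\geq\;\frac{\eta+\mu_{\sL,2}}{\sigma_{\sL,2}},
\]
and its ABP analogue with signs reversed. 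Since $\kappa'(n-\sd-2\ell)/\sqrt n$ can be as negative as $-\kappa'\sqrt n$ while the variance proxy is only $O(dn+m_\sL n)$, the inequality is tight; the parameter choices $\varepsilon_0=|\kappa-\kappa'|/2$ together with $\varepsilon_0/\sqrt{5d}\geq C_\kappa-\log(d)\kappa/2$ (SBP) or $C_\kappa-2\log(d)$ (ABP) are exactly calibrated to close it for $d$ sufficiently small. I expect this inequality chase, whose structure mirrors Lemmas \ref{l:induct2}--\ref{l:induct4} but with the extra complication of the $\kappa'$-solution drift appearing in the numerator, to be the main obstacle; everything else transcribes the earlier arguments.
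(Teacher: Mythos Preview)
Your proposal is correct and follows essentially the same approach as the paper: the paper's own proof explicitly says ``The proof is very similar to the previous section,'' bounds the row sums by $W_i+\xi_i$ using Lemma~\ref{l:bridge2} in place of Lemma~\ref{l:bridge}, and then verifies exactly the parameter inequality you isolated (using $\varepsilon_0=|\kappa-\kappa'|/2$ and $\varepsilon_0/\sqrt{5d}\geq C_\kappa-\log(d)\kappa/2$ in the SBP case, respectively $C_\kappa-2\log(d)$ in the ABP case). One small correction to your commentary: since $0\le\ell\le\sd$, the drift $\kappa'(n-\sd-2\ell)/\sqrt n$ stays positive (between $\kappa'(n-3\sd)/\sqrt n$ and $\kappa'(n-\sd)/\sqrt n$), so the issue is that you are subtracting a term of order $\kappa'\sqrt n$ from $\eta$, not that the drift itself becomes negative---but this does not affect the argument.
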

\begin{proof}
The proof is very similar to the previous section. In the SBP model, we can bound the row sums by sums of $W_i$ and $\xi_i$ and have that for 
\begin{align*}
   &\bP_2[|G(0:\sL)\cA_{0:\sL}(X_1)|\geq \eta]\\
   &\leq 2n^{-1/10}+3\Psi\left(\frac{\eta-\bar \lambda_\sL L}{(L(1-\bar \lambda_\sL^2))^{1/2}}\right)+3\Psi\left(\frac{\eta+\lambda_\sL L-\kappa' (n-\sd-2\ell)/\sqrt{n}}{(\sd+L(1-\lambda_\sL^2))^{1/2}}\right).
\end{align*}
It remains to check that the right hand side is bounded. Note that as $0\leq L\leq n_\sL$, $\bar \lambda_\sL\geq \sqrt{2/\pi m_\sL}$, we have for any $\eta>T_{\sL}$,
\begin{align*}
    \frac{\eta-\bar \lambda_\sL L}{(L(1-\bar \lambda_\sL^2))^{1/2}}\geq \frac{\eta+\mu_{\sL,1}}{\sigma_{\sL,1}},\quad \quad 
\end{align*}
Further as we have $\varepsilon_0=(\kappa-\kappa')/2$ and $\varepsilon_0/\sqrt{5d}\geq C_\kappa-\log(d)\kappa/2$, then for any $\eta>T_{\sL}$
\begin{align*}
    \frac{\eta+\lambda_\sL L-\kappa' (n-\sd-2\ell)/\sqrt{n}}{(\sd+L(1-\lambda_\sL^2))^{1/2}} \geq \frac{\eta-\kappa' (n-\sd-2\ell)/\sqrt{n}}{(2\sd)^{1/2}}\geq \frac{\eta-T_\sL}{(2\sd)^{1/2}}+\frac{C_\kappa+(\sL+2)\kappa/2}{\sqrt{n}} \geq\frac{\eta+\mu_{\sL,2}}{\sigma_{\sL,2}}.
\end{align*}
Therefore, we can replace the fractions in the inequality and thus prove the result for the SBP case. For ABP, the argument is similar. We have that
\begin{align*}
   &\bP_2[G(0:\sL)\cA_{0:\sL}(X_1)\leq \eta]\\
   &\leq 2n^{-1/10}+3\Psi\left(\frac{-\eta+\lambda_\sL L+\kappa' (n-\sd-2\ell)/\sqrt{n}}{(\sd+L(1-\lambda_\sL^2))^{1/2}}\right).
\end{align*}
It remains to check that the right hand side is bounded. Note that we have $\varepsilon_0=(\kappa-\kappa')/2$ and $\varepsilon_0/\sqrt{5d}\geq C_\kappa-2\log(d)$, then for any $\eta<T_{\sL}$
\begin{align*}
    \frac{-\eta+\lambda_\sL L+\kappa' (n-\sd-2\ell)/\sqrt{n}}{(\sd+L(1-\lambda_\sL^2))^{1/2}} &\geq \frac{-\eta+\kappa' (n-\sd-2\ell)/\sqrt{n}}{(2\sd)^{1/2}}\\&\geq \frac{-\eta+T_\sL}{(2\sd)^{1/2}}+\frac{C_\kappa+2(\sL+2)}{\sqrt{n}} \geq\frac{-\eta+\mu_{\sL,2}}{\sigma_{\sL,2}}.
\end{align*}
Therefore, we can replace the fractions in the inequality and thus prove the result for the ABP case.
\end{proof}

\begin{proof}[Proof of Lemma \ref{l:connect3}]
Our proof is inductive. The induction basis follows from Lemma \ref{l:firstinduct10} and the induction steps follow from the same proof as in Section \ref{sss:interp}. Combining the arguments, we have Lemma \ref{l:connect3}. Moreover, together with Lemma \ref{l:connect2}, we also establish Theorem \ref{t:localcluster}.
\end{proof}

\subsection{Proof of Corollaries}
\begin{proof}[Proof of Corollary \ref{c:1}]
In the SBP model, for any $0<\alpha<\tilde \alpha_c(\kappa)$, there exists $0<\kappa'<\kappa$ such that $\alpha<\tilde \alpha_c(\kappa')$. This implies the existence of $\kappa'$-solutions. By Theorem \ref{t:localcluster}, there is $d>0$ such that there exists a cluster with diameter at least $dn$ with high probability.
\end{proof}

\begin{proof}[Proof of Corollary \ref{c:2}]
In the ABP model, under Assumption \ref{a:cont}, for any $0<\alpha< \alpha_c(\kappa)$, there exists $\kappa'>\kappa$ such that $\alpha< \alpha_c(\kappa')$. This implies the existence of $\kappa'$-solutions. By Theorem \ref{t:localcluster}, there is $d>0$ such that there exists a cluster with diameter at least $dn$ with high probability.
\end{proof}
\printbibliography
\end{document}